\documentclass{amsart}

\usepackage{latexsym}
\usepackage{verbatim}
\usepackage{amsmath}
\usepackage{amssymb}
\usepackage{mathrsfs}
\usepackage{amsthm}
\usepackage{tikz}
\usetikzlibrary{decorations.pathreplacing}
\usetikzlibrary{arrows.meta}

\newcommand{\Ind}{
 \setbox0=\hbox{$x$}\kern\wd0\hbox to 0pt{\hss$
 \mid$\hss}\lower.9\ht0\hbox to 0pt{\hss$\smile$\hss}\kern\wd0
}

\newcommand{\Notind}{
 \setbox0=\hbox{$x$}\kern\wd0\hbox to 0pt{\mathchardef
 \nn=12854\hss$\nn$\kern1.4\wd0\hss}\hbox to 0pt{\hss$\mid$\hss}\lower.9\ht0
 \hbox to 0pt{\hss$\smile$\hss}\kern\wd0
}

\usepackage{amsthm, amsfonts}
\usepackage{xcolor}
\usepackage{euscript}
\usepackage{graphicx}
\usepackage{float}
\usepackage{caption}
\usepackage{tikz-qtree}
\usepackage{todonotes}
\usepackage{subcaption}
\renewcommand{\d}{\delta}
\newcommand{\inv}{^{-1}}
\newcommand{\C}{\mathcal{C}}
\newcommand{\K}{\mathcal{K}}

\newcommand{\cl}{\mathrm{cl}}

\newcommand{\acl}{\mathrm{acl}}

\newcommand{\dcl}{\mathrm{dcl}}

\newcommand{\tp}{\mathrm{tp}}

\newcommand{\of}{\mathcal{OF}}
\newcommand{\af}{\mathcal{AF}}

\newcommand{\expa}{extension\ }
\newcommand{\Exp}{\mathrm{Ext}}
\newcommand{\x}{x}%{\langle x\rangle}
\newcommand{\y}{y}%{\langle y\rangle}
\renewcommand{\a}{a}%{\langle a\rangle}
\renewcommand{\b}{b}%{\langle b\rangle}
\newcommand{\para}{\text{par}}
\newcommand{\ort}{\text{orth}}

\newcommand{\tile}{\text{tile}}
\newcommand{\F}{\mathcal{F}}
\newcommand{\Z}{\mathbb{Z}}
\newcommand{\N}{\mathbb{N}}
\newtheorem{defi}{Definition}[section]
\newtheorem{theorem}[defi]{Theorem}
\newtheorem{definition}[defi]{Definition}
\newtheorem{lemma}[defi]{Lemma}
\newtheorem{proposition}[defi]{Proposition}

\newtheorem{corollary}[defi]{Corollary}

\newtheorem{remark}[defi]{Remark}

\newtheorem{notation}[defi]{Notation}

\newtheorem{fact}[defi]{Fact}
\def\Aut{\mathop{\rm Aut}\nolimits}

\def\nil2{\mathop{\rm nil2}\nolimits}
\def\Th{\mathop{\rm Th}\nolimits}

\newcommand{\T}{\Th(\af_2)}

\begin{document}
\author{Zahra Mohammadi Khangheshlaghi \& Katrin Tent}
\date{\today}
\title{On the model theory of the Free Factor Complex of rank 2}
\begin{abstract}
We begin the investigation of the free factor complex of a free group of finite rank. For the case of rank $2$  we axiomatize its theory and show that it is $\omega$-stable with prime model $\af_2$.
\end{abstract}

\maketitle
\section{Introduction}
The free factor complex $\af_n$ is a simplicial complex naturally associated to the free group $\F_n$ of rank $n$. Its vertices are the proper free factors of $\F_n$ and edges denote containment. The cliques in this graph form simplices, making this graph a simplicial complex of rank $n-1$ very similar in flavor to the spherical Tits building associated to a projective space. Free factor complexes were originally introduced by Hatcher and Vogtmann \cite{HV}, who showed that (as in the case of spherical buildings arising from projective spaces), the free factor complex of a free group of rank $n$ is homotopy equivalent to a wedge of spheres of
dimension $n-2$.
Recently, Bestvina and Bridson \cite{BB} extended this analogy to the fundamental theorem of projective geometry by showing that for $n\geq 3$ we have $\Aut(\af_n)\cong \Aut(\F_n)$. In the case of rank $2$, all proper free factors have rank $1$ and so there is no containment relation. In this case, we study the graph of the complement relation between the free factors, i.e. we put an edge between free factors if their free product is the free group. It was shown in \cite{BB} that this complement relation is definable in the free factor complex of higher rank. Thus, necessarily, in order to study the model theory of $\af_n$ for $n\geq 3$ we have to start with the model theory $\af_2$ with respect to the complement relation and this is what we do in this note.

\section{The Free Factor Complex of rank 2}\label{sec:background}
\begin{definition}
A subgroup \( H \leq \mathcal{F}_2 \) is called a \emph{free factor} of \( \mathcal{F}_2 \) if there exists a subgroup \( K \leq \mathcal{F}_2 \) such that
\[
\mathcal{F}_2 = H * K,
\]
where \( * \) denotes the free product of groups. In this case, \( \mathcal{F}_2 \) is the free product of \( H \) and \( K \), and both \( H \) and \( K \) are themselves free groups.

\end{definition}

\begin{remark}
An element $ g \in \F_2 $ is called \emph{primitive} if and only if there exists a basis $ \{x, y\} $ of $\F_2 $ such that $g = \langle x\rangle$ if and only if $\langle g \rangle $ is a nontrivial proper free factor of $\F_2$. Thus, the nontrivial proper free factors of $\F_2$ are precisely the infinite cyclic subgroups generated by primitive elements. 
\end{remark}

\begin{definition}
    The free factor complex $\af_2$ is the simplicial graph 
    whose vertices are the nontrivial proper free factors of $\mathcal{F}_2$ and two distinct 
    vertices $x, y\in\af_2$ are connected by an edge $E(x,y)$ if they generate $\mathcal{F}_2$ as a free product.
\end{definition}

\begin{notation} 
     To simplify notation we will mostly write $x\in\af_2$ instead of $\langle x \rangle \in \af_2 $ for a primitive element $x \in \F_2$.
\end{notation}

The following are some easy, but useful observations:
\begin{remark}
Edges are invariant under $\Aut(\F_2)$, $Aut(\F_2)$ acts transitively on the edges of this graph and the stabilizer of an edge  $(x,y)$ in $Aut(\F_2)$ is isomorphic to $\Z_2\times\Z_2$.
Note that if $E(x, y)$ holds, then $x$ and $y$ are not conjugate. 
\end{remark}

We have the following characterization of edges in $\af_2$:
\begin{remark}\label{rem:bases}% (see e.g. \cite{BB} Lemmas 2.1 and 2.2 ) 
 Let $\F_2=\langle a,b\rangle$. Then $E(\b,\x)$  holds for $\x\in\af_2$
if and only if $x=b^m a^\d b^k$ for some $m, k\in\Z$ and $\d\in\{\pm 1\}$.
\end{remark}

We will be using the quantifier-free diagram of specific subgraphs $\af_2$ in our axiomatization as the main building blocks, see Section~\ref{sec:appendix}. These subgraphs are inductively defined as follows:

\begin{definition}\label{rem:sticks}\label{def:extension}
 Let $e = \{ x, y \}$ be an edge in $\af_2$. 
   There exist exactly four vertices, called \emph{sticks} in \cite{BB}, which are adjacent to both of $x$ and $y$, namely the vertices generated by $ xy, xy\inv, x\inv y $ and $x\inv y\inv$, respectively.  We call the graph consisting of these six vertices the $1$-extension  $\Exp_1(e)$ of $e$ and put $\Exp_0(e)=e$.
  
Inductively, the $k$-\expa $\Exp_k(e)$ of $e$ is $\Exp_{k-1}(e)$
        together with the $1$-extension of every edge in the $k-1$-extension.        
   Then \[\af_2=\bigcup_{k\in\N} \Exp_k(e).\]   
    If $A=\Exp_k(e)$ for some edge $e$, we call $A$ a block of level $k$.      
\end{definition}

The Farey graph $\of_2$ is the quotient of $\af_2$ under the conjugation action of~$\F_2$.
We briefly recall its inductive definition:
\begin{definition}\label{def:Farey graph}(see e.g.\cite{MT}) The Farey graph $FG$ is the planar graph defined inductively as follows: let $FG_0$ be the graph consisting of two triangles sharing one edge.
If $FG_i$ has already been defined, $FG_{i+1}$ is the graph obtained by adjoining a triangle to every boundary edge of $FG_i$. Finally $FG=\bigcup_{i\in\N}FG_i$.
\end{definition}

Note that in contrast to the Farey graph, the extensions $\Exp_k(e)$ for $k\geq 2$ are not free: some of the new vertices added in the 1-\expa of $\Exp_k(e)$ have valency strictly larger than two, i.e. there are edges to vertices in $\Exp_{k-1}(e)$ and these additional edges are controlled by Remark~\ref{rem:bases}, see Section~\ref{sec:appendix}.

\begin{remark}\label{fct:edge_acl}
\begin{enumerate}
\item
Clearly, for any edge $e=E(x,y)$ we have  $\acl(\{x, y \}) = \af_2$  and in fact $\af_2\subset\dcl(Exp_1(e))$.

    \item    The initial edge of a block of each level is unique. This can be seen inductively by removing all edges that are not contained in exactly four triangles.
      \item 
\label{rem:blocks are isomorphic}
    Any two blocks of level $k$ are isomorphic under an automorphism of $\af_2$.
\end{enumerate}
\end{remark}

\section{An additional graph structure on $\af_2$}\label{sec:C1 graph}

Until further notice we will be working in the language of graphs $L_0=\{E\}$.

\begin{notation}

For $x, y\in\af_2$ we write $D_k(x,y)$ if $k$ is minimal such that there is a path of length $k$ from $x$ to $y$ and
\[D_k(x)=\{y\in \af_2\colon D_k(x,y)\}.\]
\end{notation}

We will be expanding the language by $L_0$-definable relations and start with conjugates.
We use Remark~\ref{rem:bases} to describe the set of conjugates of a vertex in $\af_2$. 
To this end we define an additional graph structure on $\af_2$:
\begin{definition} For $x,y\in \af_2$ we put an edge $C(x,y)$ and say that $x$ is a 1-conjugate of $y$ if there is a basis $\{x,z\}$ for $\F_2$ such that $y=x^z$.
\end{definition}

To show that this edge relation $C$ is definable in $L$ we first observe:

\begin{lemma}\label{lemma:conjugates intersection}
    If $x, y, z \in \af_2$ are distinct conjugates, then \[|D_1(x) \cap D_1(y) \cap D_1(z)| \leq 1.\]
\end{lemma}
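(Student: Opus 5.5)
The plan is to normalize by an automorphism and then count letters in reduced words. Suppose, for a contradiction, that $x,y,z$ are distinct conjugate vertices with two distinct common neighbours $u\neq v$.

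\emph{Normalization.} Since $E(u,x)$ holds, $\{x,u\}$ is a basis of $\F_2$. Put $a=x$ and $b=u$, so $\F_2=\langle a,b\rangle$.

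\emph{Describing $y$ and $z$.} By Remark~\ref{rem:bases}, every neighbour of $b$ is generated by some $b^m a^{\d} b^k$. Such a vertex is conjugate to $\langle a\rangle$ only if, after passing to the abelianization, its image is $\pm$ the image of $a$. This forces $m+k=0$. Conversely, every $b^m a^{\d} b^{-m}$ generates a conjugate of $\langle a\rangle$, and we may replace $\d=-1$ by its inverse. Hence the conjugates of $x$ adjacent to $u$ are exactly the vertices $b^m a b^{-m}$ with $m\in\Z$, and distinct $m$ give distinct vertices. So there are distinct nonzero integers $m\neq n$ with
\[
y=b^m x b^{-m}, \qquad z=b^n x b^{-n}.
\]

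\emph{Constraining $v$.} Since $v$ is adjacent to $x=a$, Remark~\ref{rem:bases} with the roles of $a$ and $b$ exchanged gives $v=a^p b^{\varepsilon} a^q$ for some $p,q\in\Z$ and $\varepsilon=\pm1$, up to inverse. We have $(p,q)\neq(0,0)$, since otherwise $v=\langle b\rangle=u$.

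Now $E(v,y)$ holds if and only if $E(b^{-m}vb^{m},x)$ holds, because conjugation by $b^{-m}$ is an automorphism of $\F_2$. So we need the element
\[
w=b^{-m}a^p b^{\varepsilon} a^q b^{m}
\]
to be of the form $(a^{p'} b^{\pm1} a^{q'})^{\pm1}$. Every such element has reduced word containing exactly one $b$-letter, so this is what we test.

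\emph{Letter count.} We split according to $p$ and $q$.
\begin{itemize}
\item If $p\neq0$ and $q\neq0$, the word $w$ is already reduced and contains $2|m|+1$ letters $b^{\pm1}$. This forces $m=0$.
\item If $p=0$, then $w=b^{-m+\varepsilon}a^q b^m$. Here the $b$-letter count is $|{-m}+\varepsilon|+|m|=1$, which forces $m\in\{0,\varepsilon\}$.
\item If $q=0$, the symmetric computation forces $m\in\{0,-\varepsilon\}$.
\end{itemize}
In every case at most one nonzero value of $m$ is admissible. But $v$ would have to be adjacent to both $y$ and $z$, which requires two distinct nonzero admissible values $m$ and $n$. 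This contradiction shows $|D_1(x)\cap D_1(y)\cap D_1(z)|\leq 1$.

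\emph{Expected obstacle.} The delicate point is the bookkeeping with generators versus subgroups. The sign $\d$ and inverses of $v$ and $w$ must be handled correctly, and one must check that Remark~\ref{rem:bases} applies symmetrically with $a$ in place of $b$. Once it is clear that the neighbourhood of $\langle a\rangle$ consists exactly of the elements whose reduced words contain a single $b$-letter, closed under inverses, the counting is routine.
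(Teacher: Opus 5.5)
Your proof is correct and takes essentially the same approach as the paper. Both normalize $x=a$ and the first common neighbour to $b$, use Remark~\ref{rem:bases} to write $y,z$ as $b$-power conjugates of $a$ and the second common neighbour as $a^pb^{\pm1}a^q$, and then use a reduced-word argument to show that this neighbour is adjacent to $a^{b^m}$ for at most one nonzero $m$. Your version of the last step (conjugating by $b^{-m}$ and counting $b$-letters, with $p=0$ and $q=0$ treated separately) is more careful than the paper's comparison of $a^{t_2^l}$ with $a^{b^m}$, which glosses over the sign and inverse cases.
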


\begin{proof}
Suppose towards a contradiction that we have distinct conjugates $x, y, z\in \af_2$ and 
    $t_1\neq t_2 \in D_1(x) \cap D_1(y) \cap D_1(z)$.
    By Remark~\ref{rem:bases} we may assume that $x = \a, t_1=\b$. Then by Remark~\ref{rem:bases} 
    we have  $y = \langle  a^{b^m} \rangle$, and
    $z= \langle a^{b^k}\rangle$ for some $m, k\in \Z$.   
   Again using Remark~\ref{rem:bases} we obtain from $E(\a, t_2)$ that there are $m_1, m_2 \in \mathbb{Z}$ such that $t_2 = \langle a^{m_1} b a^{m_2} \rangle$ and from $E(t_2, y)$ we see that $y= a^{t_2^l}=a^{b^m}$. This implies $t_2=a^{m_1}b$ and $m=l$.
    
    Applying the same argument to $z$ we now see that $z=a^{t_2^m}=a^{b^m}=y$, contradicting our assmption. 
\end{proof}

We can now show that the relation $C$ is definable.

\begin{proposition}\label{the_best_lemma}
    Let $x, y \in \af_2$. The following are equivalent:
    \begin{enumerate}
        \item $x$ is a 1-conjugate of $y$;
        \item $D_1(x) \cap D_1(y)$ is infinite;
        \item $|D_1(x) \cap D_1(y)| \geq 5$;
    \end{enumerate}

If any of the above conditions hold, we call the set
    \[    \ell{(x, y)} := D_1(x) \cap D_1(x')    \]
  the \emph{line} determined  by $x$ and $y$.

\end{proposition}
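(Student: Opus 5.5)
We prove the cycle of implications (1)$\Rightarrow$(2)$\Rightarrow$(3)$\Rightarrow$(1). The implication (2)$\Rightarrow$(3) is trivial.

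\emph{(1)$\Rightarrow$(2).} Suppose $\{x,z\}$ is a basis of $\F_2$ and $y=x^z$. We claim that every $\langle z^n x^{\pm1}\rangle$, or more simply $\langle x^{\varepsilon}z\rangle$ and its relatives, lies in $D_1(x)\cap D_1(y)$. Concretely, Remark~\ref{rem:bases} applied with $a=x$, $b=z$ shows that $E(x,t)$ holds for $t=\langle x^m z x^k\rangle$. Conversely, $E(x^z,t)$ holds exactly when $t=\langle (x^z)^m z' (x^z)^k\rangle$ for some complement $z'$ of $x^z$. Since $\{x^z,z\}$ is also a basis, we may take $z'=z$.

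So it suffices to produce infinitely many $t$ adjacent to both $x$ and $x^z$. Candidates are $t=\langle z x^m\rangle$ for $m\in\Z$. Each such $t$ is adjacent to $x$ by Remark~\ref{rem:bases}. We also have $z x^m = x^{z^{-1}}\cdots$, and a short computation $z^{-1}x^m z\cdot z^{-1}\dots$ should show that $\{x^z, zx^m\}$ is a basis. Indeed $zx^m = (x^z)^{m}$-twisted: $z^{-1}x z$ and $z x^m$ together generate $z$ and $x$, because $z x^m z^{-1}\cdots$. I would verify this by an explicit Nielsen-move computation. The resulting $t$ are pairwise distinct cyclic subgroups, so the intersection is infinite.

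\emph{(3)$\Rightarrow$(1).} This is the main step. Suppose $t_1,\dots,t_5$ are distinct vertices in $D_1(x)\cap D_1(y)$. By Remark~\ref{rem:bases}, after applying an automorphism we may take $x=a$ and $t_1=b$. Then $y=b^m a^{\delta} b^k$, so $\langle y\rangle=\langle a^{\pm b^{j}}\rangle$-type conjugate $a^{b^{k}}$ up to inversion. Hence $y$ is a conjugate of $x$ by a power $b^j$ of a complement of $x$.

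If $j=\pm1$ then $y$ is a 1-conjugate of $x$ (using $\{a,b^{-1}\}$ if $j=-1$), and we are done. If $j=0$ then $y=x$, which is excluded since the vertices are distinct. So it remains to show that $|j|\ge2$ forces $|D_1(a)\cap D_1(a^{b^j})|\le 4$.

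For this I would argue as in the proof of Lemma~\ref{lemma:conjugates intersection}. Any common neighbour has the form $t=\langle a^{m_1}ba^{m_2}\rangle$, and the condition $E(t,a^{b^j})$ forces $a^{b^j}=a^{t^l}$ up to inversion and conjugation bookkeeping. Comparing reduced words, this pins $t$ down to $b$ or $a^{m_1}b$ with a constraint that determines $m_1$ up to finitely many choices. One also has to account for inversions, $\delta=\pm1$.

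The expected obstacle is carrying out this reduced-word case analysis carefully. It must show that for $|j|\ge2$ only boundedly many (at most four) values survive, while for $|j|=1$ there is a one-parameter family.

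Finally, for the line definition, $\ell(x,y)$ is to be read with $x'=y$.
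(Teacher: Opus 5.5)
\textbf{The gap is in (3)$\Rightarrow$(1), at the step} ``Then $y=b^m a^{\delta} b^k$, so \dots\ $y$ is a conjugate of $x$ by a power $b^j$''. Adjacency of $y$ to $t_1=b$ only gives $\langle y\rangle=\langle b^m a^{\delta} b^k\rangle$. This element is conjugate to the cyclically reduced word $b^{m+k}a^{\delta}$, so it is a conjugate of $a^{\pm1}$ only when $m+k=0$. For example, $y=ab$ is adjacent to both $a$ and $b$ but is not conjugate to $a$.

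Your argument therefore silently assumes that $x$ and $y$ are conjugate and never treats non-conjugate pairs. This is not a harmless omission. An adjacent pair $x,y$ has exactly four common neighbours, the sticks $xy,xy^{-1},x^{-1}y,x^{-1}y^{-1}$. So the threshold $5$ is sharp, and its job is precisely to rule out the non-conjugate case. Your proof never uses it for that.

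The missing idea, which is the first half of the paper's proof, is to pass to $\of_2$. Suppose $x,y$ are not conjugate. Their images are then distinct vertices of the Farey graph, and such vertices have at most two common neighbours. Hence the $\geq 5$ common neighbours of $x,y$ fall into at most two conjugacy classes. By pigeonhole, three of them are distinct conjugates sharing the two distinct common neighbours $x,y$, which contradicts Lemma~\ref{lemma:conjugates intersection}. Only after conjugacy is established can you write $\langle y\rangle=\langle a^{b^j}\rangle$ and run your reduced-word analysis. That part is sound in outline; it in fact shows that for $|j|\geq 2$ the only common neighbour is $\langle b\rangle$.

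\textbf{A smaller slip in (1)$\Rightarrow$(2).} You use the convention $x^z=z^{-1}xz$, and under it $\{z^{-1}xz,\,zx^m\}$ is not a basis for $m\neq 0$; for instance $\{z^{-1}xz,\,zx\}$ fails. So your claim that these elements generate $x$ and $z$ is false. The correct family is $\langle x^m z\rangle$, $m\in\Z$: since $x^m z=z\,(x^z)^m$, each such element forms a basis both with $x$ and with $x^z$.
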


\begin{proof}

    $(1) \Rightarrow (2)$: Let $y$ be a 1-conjugate of $x$ with respect to $t$, so we can assume that $y =  t x t^{-1} $. Then $t \in D_1(\x) \cap D_1(\y)$ and for every $k \in \mathbb{Z}$, $\langle t x^k \rangle \in D_1( \langle x \rangle ) \cap D_1( t x t^{-1} )$, so this set is infinite.

    $(2) \Rightarrow (3)$: This is clear.

    $(3) \Rightarrow (1)$: Let $x, y\in\af_2$ be such that  $|D_1(x) \cap D_1(y)| \geq 5$.
    We first claim that $x$ and $y$ are conjugate.
    Suppose not and let $\pi$ be the quotient map from $\af_2$ to $\mathcal{OF}_2$ taking
    every vertex to its conjugacy class.
    If $x$ and $y$ are not conjugate, $\pi(x) \neq \pi(y)$.
    Since $\of_2$ is isomorphic to the Farey graph and two vertices 
    in the Farey graph have at most two neighbors in common, we have
     $|D_1(\pi(x)) \cap D_1(\pi(y))|\leq 2$.
     By the pigeonhole principle
    there exist $A \subset D_1(x) \cap D_1(y)$ such that $|A| \geq 3$ 
    and $|\pi(A)| = 1$. 
    Thus $A$ consists of conjugates having at least two common neighbours, 
    contradicting Lemma~ \ref{lemma:conjugates intersection}.
    Hence,  $x$ and $y$ are conjugate. 
    
    Now let $t\neq t' \in D_1(x) \cap D_1(y)$. 
    Without loss of generality we may assume that  $x =\b$ and $t =\a$,
    so by Remark \ref{rem:bases} we have $y = a^{-k} b a^k$ for $k \in \mathbb{Z}$.
    Since $t' \in D_1(\b)$, by Remark~\ref{rem:bases} we have
    $t' = \langle b^{k_1} a b^{k_2} \rangle$ for some  $k_1, k_2 \in\Z$, not both zero,
    and since $t'\in D_1(y)$ we also have 
    $t'=y^{m_1}ay^{m_2}=(a^{-k} b a^k)^{m_1} a (a^{-k} b a^k)^{m_1}$ for some $m_1,m_2\in\Z$.
    Since the reduced word for $t'$ begins or ends in the letter $b^{\pm 1}$, it follows that $k=0$
    and hence $y=b^a$ as claimed.

\end{proof}

We need the following lemma:

\begin{lemma}\label{lem:conjugates}
Let $\F_2=\langle a, b\rangle, g\in \F_2$. Then  $C(b^g,y)$ holds if and only if $y=b^{a^\d b^kg}$ for some $k\in\Z$ and $\d\in\{\pm 1\}$.
\end{lemma}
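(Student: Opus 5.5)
The plan is to reduce everything to the case $g=1$ using the inner automorphism given by conjugation by $g$, and then read off the complements of $b$ from Remark~\ref{rem:bases}. Throughout I fix the conjugation convention used in the proof of Proposition~\ref{the_best_lemma} and compose iterated conjugations accordingly, so that $b^{a^\d b^k g}$ means $b$ conjugated successively by the three factors in the order matching that convention.

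The key observation is that automorphisms of $\F_2$ map bases to bases. Hence for any $\varphi\in\Aut(\F_2)$ we have $C(x,y)$ if and only if $C(\varphi(x),\varphi(y))$, with $\varphi(x^z)=\varphi(x)^{\varphi(z)}$. Let $\varphi$ be conjugation by $g$, so that $\varphi(b)=b^g$. The relation $C(b^g,y)$ means that $y=(b^g)^z$ for some $z$ such that $\{b^g,z\}$ is a basis. Writing $z=\varphi(w)$, this is equivalent to $\{b,w\}$ being a basis and $y=\varphi(b^w)$.

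Now I invoke Remark~\ref{rem:bases}. The statement that $\{b,w\}$ is a basis, i.e.\ that $E(b,\langle w\rangle)$ holds, gives $w=b^m a^\d b^n$ for some $m,n\in\Z$ and $\d\in\{\pm1\}$. I also have to allow $w$ to be replaced by $w^{-1}$, since the complement is only determined as a cyclic subgroup; this is absorbed by the sign $\d$ and by the free exponents. Conjugating $b$ by $w$, the innermost power of $b$ adjacent to $b$ commutes with $b$ and drops out. What survives is a conjugate of $b$ by a single power of $b$ together with $a^\d$, in the order fixed by the convention. Applying $\varphi$ then appends the conjugation by $g$, which yields exactly the form $y=b^{a^\d b^k g}$ of the statement.

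For the converse, given $y=b^{a^\d b^k g}$, I take $w$ to be the corresponding element $b^{k}a^{\d}$ (or its reverse, depending on the convention), so that $\{b,w\}$ is a basis by Remark~\ref{rem:bases}. Then $z=\varphi(w)$ makes $\{b^g,z\}$ a basis, and a direct check gives $(b^g)^z=y$.

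I expect the main obstacle to be bookkeeping rather than substance. One point is matching the order of factors in $a^\d b^k g$ with the paper's convention for iterated conjugation, which I will check against the computation $y=b^a$ at the end of the proof of Proposition~\ref{the_best_lemma}. The other is making sure that the redundant power of $b$ in $w$, and the choice between $z$ and $z^{-1}$, are correctly absorbed, so that no generality is lost and no extra elements appear.
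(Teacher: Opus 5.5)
Your proposal is correct and is essentially the paper's argument. The paper settles $g=1$ with Remark~\ref{rem:bases} and then ``conjugates the argument'' by applying that remark to the basis $\{a^g,b^g\}$, which amounts to your pull-back along the inner automorphism $\varphi$. Your explicit handling of the sign $\d$, of $w$ versus $w^{-1}$, and of reading the exponent as successive conjugations (which makes the statement independent of the left/right convention) fills in details the paper leaves implicit; for instance, its $g=1$ case writes $x=b^mab^k$ without the $\d$.
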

\begin{proof}
First consider the case $g=1$. If $C(b, y)$ holds, then by Remark~\ref{rem:bases} we know that $y=b^x$ where $x=b^m ab^k$ for some $m,k\in\Z$ and  hence $y=b^{ab^k}$. The converse is clear.

By conjugating the previous argument we now see that if $C(b^g,y)$ holds, then $y=(b^g)^x$ where $x$ is of the form $x=(b^g)^m a^g (b^g)^k$ for some $m,k\in\Z$. Hence $y=(b^g)^{a^g(b^g)^k}=b^{ab^kg}$ as claimed. Again the converse is clear.
\end{proof}

\begin{corollary}\label{cor:C1 trans}
\begin{enumerate}
\item $Aut(\af_2)$ acts transitively on $C$-edges.

\item $\Aut(\af_2)$ acts transitively on triples $(x,y,z)$ with $C(x,y)$ and $z\in\ell(x,y)$.
\end{enumerate}

\end{corollary}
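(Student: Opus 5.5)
The idea is to realize the required graph automorphisms by automorphisms of $\F_2$. Every $\varphi\in\Aut(\F_2)$ permutes the nontrivial proper free factors and preserves the relation $E$, so it induces an automorphism of $\af_2$. By Proposition~\ref{the_best_lemma}, $C$ is $L_0$-definable, so it is preserved by $\Aut(\af_2)$. In fact it is directly clear that $\Aut(\F_2)$ preserves $C$: if $\{x,z\}$ is a basis and $y=x^z$, then $\{\varphi(x),\varphi(z)\}$ is a basis and $\varphi(y)=\varphi(x)^{\varphi(z)}$. Likewise lines $\ell(x,y)=D_1(x)\cap D_1(y)$ are mapped to lines. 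It therefore suffices to show that $\Aut(\F_2)$ already acts transitively on $C$-edges and on the triples in (2).

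For (1), let $C(x,y)$. By definition there is a basis $\{x,z\}$ of $\F_2$ with $y=x^z$. The automorphism $\varphi$ sending $b\mapsto x$ and $a\mapsto z$ maps the pair $(b,b^a)$ to $(x,y)$. Thus every $C$-edge, as an ordered pair, lies in the orbit of $(b,b^a)$, and transitivity follows. Because the argument works for ordered pairs, we get transitivity on oriented $C$-edges as well.

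For (2), by (1) we may assume $(x,y)=(b,b^a)$, so we need to understand the stabilizer of this pair acting on $\ell(b,b^a)$. First I would determine the line explicitly. By Remark~\ref{rem:bases}, $t\in D_1(b)$ means $t=b^{m}a^{\d}b^{k}$. The condition $t\in D_1(b^a)$ should, by the computation in the proof of Proposition~\ref{the_best_lemma} (the step $(3)\Rightarrow(1)$), force $t$ to have the form $\langle a b^k\rangle$ up to inversion and cyclic adjustment. I would verify this carefully, since generators are only determined up to inverse and the free factor $\langle t\rangle$ is what matters. The expected result is
\[\ell(b,b^a)=\{\langle ab^k\rangle : k\in\Z\}.\]
Next I would check the stabilizer. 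The automorphisms $\psi_n\colon a\mapsto ab^n$, $b\mapsto b$ fix $b$, and they satisfy $\psi_n(b^a)=b^{ab^n}$, which equals $b^a$ as a subgroup up to the conventions $b^g=g^{-1}bg$ versus $gbg^{-1}$. One has to be careful here: with $b^a=a^{-1}ba$ we get $(ab^n)^{-1}b(ab^n)=b^{-n}a^{-1}bab^n$, which is not equal to $a^{-1}ba$. So instead I would use $a\mapsto b^na$, which gives $(b^na)^{-1}b(b^na)=a^{-1}ba$ and fixes $b^a$ exactly, while sending $\langle ab^k\rangle$ to $\langle b^nab^k\rangle$. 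That is conjugate to $\langle ab^{k+n}\rangle$ but not equal to it, so it does not land in the claimed parametrization.

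This mismatch is the main obstacle: the explicit parametrization of $\ell(x,y)$ has to match the choice of convention for conjugation. I would recompute the line as $\{\langle b^k a\rangle\}$ or $\{\langle ab^k\rangle\}$, whichever is consistent with the convention, and then check that the family of automorphisms $a\mapsto b^na$ (or $a\mapsto ab^n$), $b\mapsto b$, fixes $b$ and $b^a$ and acts on the line as the shift $k\mapsto k+n$, which is transitive. Combined with (1), this gives (2).
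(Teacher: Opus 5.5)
Your proposal is correct in substance. Part (2) follows the paper's argument: use (1) to reduce to the stabilizer of $(b,b^a)$, then use Nielsen automorphisms that fix $b$ and $b^a$ and translate along the line. Part (1) takes a shorter route than the paper. The paper first lists the $C$-neighbours of $b$ as $b^{a^{\d}b^m}$ via Lemma~\ref{lem:conjugates}. It then sends $(a,b)$ to $(c,d)$ or $(c\inv,d)$ depending on the signs, and adjusts the exponent by conjugating with a power of $d$. You observe instead that the definition of $C$ already gives a basis $\{x,z\}$ with $y=x^z$. So the single automorphism $(a,b)\mapsto(z,x)$ carries $(b,b^a)$ to $(x,y)$. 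This needs neither Lemma~\ref{lem:conjugates} nor the case distinction, and it gives transitivity on ordered $C$-edges directly, which is exactly what your reduction in (2) uses.

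The one step you leave open, the explicit line, resolves in favour of your second option. Take $b^a=a\inv ba$, which is the convention of Lemma~\ref{lem:conjugates}. Applying the inner automorphism $g\mapsto a\inv ga$ to Remark~\ref{rem:bases} gives
\[D_1(b^a)=\{\langle a\inv b^m a b^k a\rangle : m,k\in\Z\}.\]
The generators $(b^{m'}ab^{k'})^{\pm 1}$ of elements of $D_1(b)$ have $a$-length $1$, while $a\inv b^m a b^k a$ has $a$-length $3$ unless $m=0$. Hence $\ell(b,b^a)=\{\langle b^k a\rangle : k\in\Z\}$. The automorphism $a\mapsto b^na$, $b\mapsto b$ fixes $b$ and $a\inv ba$ and acts on this line as $k\mapsto k+n$. (With $b^a=aba\inv$ the line is $\{\langle ab^k\rangle\}$ and $a\mapsto ab^n$ works.)

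The inclusion $\ell(b,b^a)\subseteq\{\langle b^ka\rangle : k\in\Z\}$ is what guarantees the shifts exhaust the line. So write this computation out rather than citing step $(3)\Rightarrow(1)$ of Proposition~\ref{the_best_lemma}, which identifies $y$, not the line. Your diagnosis of the convention clash is accurate. The paper's own proof sets $y=aba\inv$ and $\ell(x,y)=\{ab^k\}$, but then uses $\sigma(a)=b^{k-1}a$ and moves $ba$ to $b^ka$; that is the computation for $b^a=a\inv ba$.
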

\begin{proof}
For the first part consider bases $\{a,b\}$ and $\{c,d\}$ for $\F_2$ and $C$-edges $C(b,b^{a^{\d_1} b^m})$ and $C(d,d^{c^{\d_2} d^k}), k, m\in\Z, \d_1,\d_2\in\{\pm 1\}$. If $\d_1=\d_2$ let $\sigma\in\Aut(\F_2)$ with $\sigma(a,b)=(c,d)$. Otherwise let  $\sigma(a,b)=(c\inv,d)$. Let $\kappa$ be conjugation by $d^{k-m}$. Then $\sigma(b,b^{a^{\d_1} b^m})=(d,d^{c^{\d_2} d^m})$ and hence the composition of $\sigma$ with $\kappa$ takes $(b,b^{a^{\d_1} b^m})$ to $(d,d^{c^{\d_2} d^m})$.

 Part (2): By part (1) it is enough to show that the stabilizer of  $x=\b$ and $y= \langle aba^{-1}\rangle$ acts transitively on  $\ell(x,y)= \{ ab^{k} \,\mid \,  k \in \mathbb{Z} \}$. Let $z=ab^k\in\ell(x,y)$ and note that $\{b,ba\}$ and $\{b,b^ka\}$ are bases. Let $\sigma\in\Aut(\F_2)$ be such  that $\sigma(b,ba)=(b,b^ka)$. Then
 $\sigma(a)=b^{k-1}a$ and hence $\sigma$ fixes $b$ and $b^a$ and takes $ba$ to $b^ka$ as required.
\end{proof}

For future reference we also note the following:
\begin{remark}\label{rem: fix nbhs and act on lines}
 Let $\sigma\in\Aut(\F_2)$ be such  that $\sigma(a, b)=(b^{k-1}a,b)$ as above. Then
 $\sigma$ fixes $b, b^a$ and hence  $b^{ab^k}$ for any $k\in\Z$. 
\end{remark}

\begin{notation}
Given $g$ in reduced form 
\[g=a^{k_1} b^{m_1}\ldots a^{k_s} b^{m_s}, k_i, m_i\in\Z, 1\leq i\leq s \]
we call $k=\Sigma_{i=1}^s |k_i|$.
 the $a$-length of $g$.
 
We call each $b^k, k\in Z,$ between two occurrences of $a^{\pm 1}$  a $b$-tile (note that $b$-tiles can be empty). Thus, if $g$ has $a$-length $k$, then there are $k+1$-many $b$-tiles, including the ones before the first and after the last occurrence of  $a^{\pm 1}$.

For $x, y\in\af_2$ we say that $x$ is a $k$-conjugate of $y$ (and write  $C_k(x,y)$) if $k$ is minimal such that there is a reduced $C$-path of length $k$ from $x$ to $y$.
We write \[C_k(x)=\{y\in\af_2\colon C_k(x,y)\}\mbox{\ and \ } C(x)=\bigcup_{k\in\N} C_k(x).\]

\end{notation}

\begin{proposition}\label{prop:conjugacy tree}
Suppose $x\in\F_2=\langle a,b\rangle$ is conjugate to $b$, i.e. $x=b^g$ where  we write $g$ in reduced form as
\[g=a^{k_1} b^{m_1}\ldots a^{k_s} b^{m_s}, k_i, m_i\in\Z, 1\leq i\leq s \]

Then there is a unique $C$-path from  $\x$ to $\langle b\rangle$ of length $k=\Sigma_{i=1}^s |k_i|$.
Hence the conjugacy class of a vertex in $\af_2$ is a tree under the edge relation $C$.

\end{proposition}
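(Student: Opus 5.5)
The plan is to use Lemma~\ref{lem:conjugates} to turn $C$-paths in the conjugacy class of $b$ into words. The starting observation is that $b^g$ depends only on the coset $\langle b\rangle g$. So we normalise $g$ so that it does not begin with a power of $b$, that is, $g=a^{k_1}b^{m_1}\cdots a^{k_s}b^{m_s}$ with $k_1\neq 0$ (and all later $k_i\neq 0$, $m_i\neq 0$ except possibly $m_s$). The map $\langle b\rangle g\mapsto b^g$ is a bijection between right cosets of $\langle b\rangle=C_{\F_2}(b)$ and the conjugates of $b$. Under this bijection, Lemma~\ref{lem:conjugates} says that the $C$-neighbours of $b^g$ are exactly the $b^{a^{\d}b^kg}$, $k\in\Z$, $\d=\pm1$. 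In coset terms, a $C$-edge joins $\langle b\rangle g$ to $\langle b\rangle a^{\d}b^k g$.

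\textbf{Existence.} Write $g$ as a product of letters $a^{\pm1}$ interleaved with $b$-tiles, i.e.\ $g=a^{\d_1}b^{n_1}a^{\d_2}b^{n_2}\cdots a^{\d_k}b^{n_k}$, where $k$ is the $a$-length. For $j=0,\dots,k$ let $g_j=a^{\d_{k-j+1}}b^{n_{k-j+1}}\cdots a^{\d_k}b^{n_k}$ be the suffix of $g$ of $a$-length $j$. Then $g_0=1$, $g_k=g$, and $g_{j}=a^{\d}b^{n}g_{j-1}$. By Lemma~\ref{lem:conjugates}, $b^{g_{j-1}}$ and $b^{g_j}$ are $C$-adjacent, so $b,b^{g_1},\dots,b^{g_k}=x$ is a $C$-path of length $k$. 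It is reduced: consecutive vertices are distinct, and $b^{g_{j+1}}\neq b^{g_{j-1}}$ because the cosets $\langle b\rangle g_{j+1}$ and $\langle b\rangle g_{j-1}$ differ. The latter holds since $g_{j+1}g_{j-1}^{-1}=a^{\d}b^{n}a^{\d'}b^{n'}$ is reduced and not in $\langle b\rangle$.

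\textbf{Uniqueness and the tree property.} Define $\lambda(b^g)$ to be the $a$-length of the normalised $g$; this is well defined on cosets. The key step is to show that the neighbours of a vertex $v=b^g$ with $\lambda(v)=j\ge1$ consist of exactly one vertex with $\lambda=j-1$, and all others have $\lambda=j+1$. For $j=0$, all neighbours have $\lambda=1$.

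To prove this, multiply $g=a^{\d_1}b^{n_1}\cdots$ on the left by $a^{\d}b^{k}$. The product $a^{\d}b^kg$ freely reduces only if $k=0$ and $\d=-\d_1$. In that case we get $b^{n_1}\cdots$, which normalises, after deleting the leading $b$-power, to $a$-length $j-1$. This gives a single neighbour, since all such $k,\d$ give the same vertex. In every other case the word stays reduced, begins with $a^{\pm1}$, and has $a$-length $j+1$. Distinct pairs $(\d,k)$ here give distinct cosets, though that is not even needed.

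The main obstacle is to make this normalisation and reduction bookkeeping airtight. In particular we must check that left multiplication by $b^k$ is absorbed by the coset, and that no further cancellation occurs.

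With the key step in hand, $\lambda$ is a height function in which every vertex except $b$ has a unique neighbour of smaller height. Consider any reduced $C$-path from $x$ to $b$, and look at the point where $\lambda$ along it attains its maximum. At an interior maximum, both the predecessor and the successor have smaller $\lambda$, so both must be the unique lower neighbour; they are therefore equal, contradicting reducedness. Hence $\lambda$ is strictly decreasing along the path. Each step then goes to the unique parent, so the path is unique and has length $\lambda(x)=k$.

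The same height argument applies to a reduced $C$-cycle: its vertex of maximal $\lambda$ would be an interior maximum of a path, which is impossible. So the conjugacy class of $b$ contains no cycles, and since every vertex is joined to $b$, it is a tree. By transitivity of $\Aut(\F_2)$ on vertices (Remark after the definition of $\af_2$), every conjugacy class is a tree under $C$.
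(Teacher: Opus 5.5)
Your proof is correct. The existence half is the same as the paper's: the paper inducts by writing $g=a^{\d}h$ and adding the $C$-edge from $b^h$ to $b^g$ supplied by Lemma~\ref{lem:conjugates}, which is exactly your chain of suffixes $g_j$.

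The difference is in uniqueness and acyclicity. The paper only says that uniqueness ``follows from the fact that these expressions are unique''. It does not argue separately that there are no $C$-cycles, although its conclusion that each conjugacy class is a tree needs this.

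Your height function $\lambda$ on right cosets $\langle b\rangle g$ makes both points explicit. Every vertex other than $b$ has exactly one neighbour of height $\lambda-1$, and all its other neighbours have height $\lambda+1$. From this you get at once that every reduced $C$-path to $b$ is the descending one, and that there are no reduced $C$-cycles.

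A slightly shorter route, closer to what the paper seems to intend, is to read off a word from a $C$-path. A path $b=y_0,\dots,y_r$ with $y_i=b^{a^{\epsilon_i}b^{l_i}h_{i-1}}$ spells the word $w=a^{\epsilon_r}b^{l_r}\cdots a^{\epsilon_1}b^{l_1}$. Free cancellation in $w$ occurs exactly when the path backtracks. So for a reduced path, $w$ is the normal form of the coset $\langle b\rangle g$, and for a reduced cycle it would be a nontrivial reduced word beginning with $a^{\pm1}$ that lies in $\langle b\rangle$, which is impossible. Both arguments are valid; yours has the advantage of isolating the ``unique parent'' structure explicitly.

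Two small points of wording.

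First, looking at the \emph{global} maximum of $\lambda$ along the path is not quite enough: heights $3,2,1,2,1,0$ attain their global maximum only at the starting vertex. You should say that your argument rules out every interior \emph{local} maximum. Since consecutive heights differ by exactly $1$ and the path ends at height $0$, this forces $\lambda$ to drop at every step.

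Second, vertices of $\af_2$ are cyclic subgroups, not elements. To identify cosets with vertices you need that $\langle b^g\rangle=\langle b^{g'}\rangle$ forces $b^{g'}=b^g$. This holds because $b$ is not conjugate to $b^{-1}$, as the exponent sum shows.
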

\begin{proof}
The proof is by induction on $k\geq 1$.
First suppose $k=1$, so $g=a^\d b^m$ for some $m\in \Z$ and $\d\in\{{\pm 1}\}$.  Then $C(b, b^g)$ holds by Lemma~\ref{lem:conjugates}.

Now suppose we have proved the claim for $k\geq 1$ and suppose $g$ has $a$-length $k+1$. Write
$g=a^\d h$ where the $a$-length of $h$ is $k$  and $\d\in\{{\pm 1}\}$. By induction assumption there is a $C$-path of length $k$ from $b$ to $b^h$ and
 by Lemma~\ref{lem:conjugates} we see that $C(b^h,b^g)$ holds. Hence  we obtain a $C$-path of length $k+1$ from $b$ to $b^g$ and the uniqueness of this path follows from the fact that these expressions are unique.
\end{proof}

 \begin{remark}\label{rem:line_intersection}
 Note that by Proposition \ref{lemma:conjugates intersection}, the lines $\ell(x,y_1)$ and  $\ell(x,y_2)$  have at most one element in common since  $\ell(x,y_1) \cap \ell(x,y_2)= D_1(x)\cap D_1(y_1)\cap D_1(y_2)$ and $x, y_1, y_2$ are conjugate.   
 \end{remark}

\begin{definition}
We say that the lines $\ell(x,y_1), \ell(x,y_2)$        are  
        \emph{parallel} if they do not intersect and \emph{orthogonal} if they intersect in exactly one vertex. We call this vertex the \emph{witness} to orthogonality.
        
Similarly, if $y_1\in C_2(y_1)$, 
       we say that \( y_1 \) and \( y_2 \) are parallel (orthogonal, respectively) if \( l{(x, x_1)} \) and \( l{(x, x_2)} \) are parallel (orthogonal, respectively) where $x$ is the unique vertex in $C_1(y_1)\cap C_1(y_2)$. 
       
\end{definition}

\begin{remark}\label{rem:ort transitive} Note that  by Corollary~\ref{cor:C1 trans} (2) there is a one-to-one correspondence between $z\in\ell(x,y)$ and the unique line $\ell(y,w)$ containing $z$. It thus follows that  $\Aut(\af_2)$ acts transitively on triples $(x,y,w)$ with $x, w\in C_1(y)$ where $x$ and $y$ are orthogonal.
\end{remark}

Being parallel is an equivalence relation on $C_1(x)$ for $x\in\af_2$.:

\begin{lemma}\label{lem:parallel equiv}
    Write $C_1(\b)= B_1\dot\cup B_2$ where
     \[
    B_1 = \left\{ \langle b^{a b^m}\rangle \mid m \in \mathbb{Z} \right\}, \quad
    B_2 = \left\{ \langle b^{a\inv b^m}\rangle \mid m \in \mathbb{Z} \right\}.
    \]
Then $x,y\in C_1(b)$ are parallel if and only $x,y$ belong to the same $B_i, i=1,2$. 
\end{lemma}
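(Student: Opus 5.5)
By Lemma~\ref{lem:conjugates} with $g=1$ we have $C_1(b)=B_1\cup B_2$. The union is disjoint because the exponents $ab^m$ and $a^{-1}b^{m'}$ give different conjugates of $b$: indeed $b^{ab^m}=b^{a^{-1}b^{m'}}$ would force $ab^m(a^{-1}b^{m'})^{-1}=ab^{m-m'}a$ to centralize $b$, which is impossible. So the task is to compute, for $x,y\in C_1(b)$, whether the lines $\ell(b,x)$ and $\ell(b,y)$ meet.

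The plan is to determine each line explicitly. By Proposition~\ref{the_best_lemma} and its proof, for $x=b^{g}$ with $g=a^{\d}b^m$, the line is
\[
\ell(b,x)=D_1(b)\cap D_1(b^{g})=\{\langle a^{\d}b^{j}\rangle\colon j\in\Z\}.
\]
Here we use that $b^{a^\d b^m}$ is conjugation of $b$ by $a^\d b^m$. Equivalently it is conjugation by $a^\d$ followed by a power of $b$, and the power of $b$ does not change $D_1(b)$. Concretely, I would redo the computation of Corollary~\ref{cor:C1 trans}(2): there $\ell(b,b^{a})$ was found to be $\{ab^k\}$ up to the inversion convention. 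I would also check directly that $\langle a^\d b^j\rangle$ is adjacent to both $b$ and $b^{a^\d b^m}$. Adjacency to $b$ is immediate from Remark~\ref{rem:bases}. Adjacency to $b^{a^\d b^m}$ follows by applying the inner automorphism $(a^\d b^m)^{-1}$-conjugation and Remark~\ref{rem:bases}, using that $\langle t\rangle$ and $\langle t\inv\rangle$ are the same vertex.

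Combining this with Remark~\ref{rem:line_intersection} (two lines through $b$ share at most one vertex), it remains to check the following. The lines for two elements of the same $B_i$ are disjoint, and lines from $B_1$ and $B_2$ meet in exactly one vertex. The main obstacle is precisely this identification. The description above, taken literally, would make all lines from $B_1$ coincide, which contradicts the fact that distinct $x$ give distinct lines (Lemma~\ref{lemma:conjugates intersection}). So the honest computation must keep track of the $b^m$ part, and the line should be $\{\langle b^{m}a^{\d}b^{j}\rangle\}$-type sets indexed so that $m$ matters. Write each element of $D_1(b)$ in the normal form $b^{p}a^{\pm1}b^{q}$ from Remark~\ref{rem:bases}, identifying $\langle t\rangle$ with $\langle t\inv\rangle$. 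I expect $\ell(b,b^{a b^m})$ to be the set of $\langle b^{-m}a b^{q}\rangle$, $q\in\Z$: the first exponent is fixed by $m$ and the second is free. Likewise $\ell(b,b^{a\inv b^m})$ should be the set of $\langle b^{-m}a\inv b^{q}\rangle$.

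Assuming these forms, the conclusion follows. Two lines in $B_1$ with $m\neq m'$ consist of elements $b^{-m}ab^q$ and $b^{-m'}ab^{q'}$. These are never equal as reduced words, and never inverse to each other, since inverses have $a\inv$. So the lines are parallel, and similarly within $B_2$. For $x\in B_1$ and $y\in B_2$ we have $\langle b^{-m}ab^{q}\rangle=\langle (b^{-m}ab^q)\inv\rangle=\langle b^{-q}a\inv b^{m}\rangle$. This lies in $\ell(b,y)$ exactly when $-q=-m'$, which gives exactly one common vertex, so the lines are orthogonal. The careful verification of the line formula is done by taking $t=b^pa^{\pm1}b^q$ and requiring $E(t,b^{ab^m})$ via Remark~\ref{rem:bases} after conjugating back, as in the proof of Lemma~\ref{lemma:conjugates intersection}. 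This is where I expect the bookkeeping to be delicate.
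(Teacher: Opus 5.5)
\textbf{Verdict: genuine gap, easily repaired.} Your route is the paper's route. You compute the lines through $b$ explicitly by pulling $x=b^{g}$ back to $b$ with an inner automorphism and applying Remark~\ref{rem:bases}, then compare them. The paper does the same thing pointwise: it takes $t\in\ell(b,x)$ and asks which $y\in C_1(b)$ have $t$ on their line.

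The gap is that you never carry out this computation: everything after ``Assuming these forms'' is conditional. Moreover, the forms you assume are false, because the two families are interchanged. The notation forces $x^g=g^{-1}xg$; with $gxg^{-1}$ one gets $b^{ab^m}=aba^{-1}$ for every $m$, so $B_1$ would be a single vertex. For $g=ab^m$, the automorphism $w\mapsto gwg^{-1}$ sends $b^{ab^m}$ to $b$. It sends your candidate $b^{-m}ab^q$ to $a^2b^{q-m}a^{-1}$, which has $a$-length $3$ unless $q=m$. So only one of your proposed points actually lies on $\ell(b,b^{ab^m})$.

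The swap comes from reading Corollary~\ref{cor:C1 trans}(2) as computing $\ell(b,b^a)$. The set $\{ab^k\}$ there is the line through $b$ and $aba^{-1}=b^{a^{-1}}$, whereas $\ell(b,b^a)=\{\langle b^ka\rangle\colon k\in\Z\}$. (The printed proof of this lemma has the same sign slip: it writes $t=\langle b^sa^{-1}b^m\rangle$ for $t\in\ell(b,b^{ab^m})$.)

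The missing computation is short. Each vertex of $D_1(b)$ has a unique generator of the form $t=b^pab^q$, and
\[
(ab^m)\,t\,(ab^m)^{-1}=ab^{m+p}ab^{q-m}a^{-1},\qquad (a^{-1}b^m)\,t\,(a^{-1}b^m)^{-1}=a^{-1}b^{m+p}ab^{q-m}a .
\]
The first word has $a$-length one iff $q=m$, and the second iff $p=-m$. Hence
\[
\ell(b,b^{ab^m})=\{\langle b^pab^m\rangle\colon p\in\Z\},\qquad \ell(b,b^{a^{-1}b^m})=\{\langle b^{-m}ab^q\rangle\colon q\in\Z\}.
\]
With these formulas your last paragraph goes through. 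In $B_1$ the second exponent is fixed and in $B_2$ the first is fixed, so distinct lines from the same family are disjoint. The $B_1$-line with parameter $m$ and the $B_2$-line with parameter $m'$ meet exactly in $\langle b^{-m'}ab^m\rangle$. This also supplies the common vertex, which Remark~\ref{rem:line_intersection} alone does not give, since it only bounds the intersection by one.

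Since the statement is symmetric in $B_1$ and $B_2$, your conclusion survives the swap. As written, however, the proof rests on an unverified identity that is wrong, and the display above is what must be supplied.
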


\begin{proof}
Let $x\in B_1$, so $x=\langle  b^{a b^m}\rangle$ for some $m\in\Z$ by Lemma~\ref{lem:conjugates}. It suffices to show that if $y\in\ C_1(b)$ is orthogonal to $x$, then $x\in B_2$.
So let $t\in \ell(\b, x)$. Then $t =\langle b^s a\inv b^m\rangle=\langle b^{-m} a b^{-s}\rangle$ for some $s \in \mathbb{Z}$ by Remark~\ref{rem:bases}.
  By the same remark, if $y\in C_1(\b)$ is such that $t\in\ell(\b,y)$, then we have $E(t,y)$ and hence $y =\langle  b^{a b^{-s}} \rangle\in B_2$, proving the claim.
\end{proof}

\begin{definition}
    We call parallel lines $\ell{(x, x_1)}$ and $\ell{(x, x_2)}$ neighbors if 
    for every vertex $ y_1 \in\ell{(x, x_1)} $,
    there is some $y_2\in\ell{(x, x_2)} $
    with $E(y_1,y_2)$.
\end{definition}

\begin{lemma}\label{lmm:finite-neighbor}
Every line $\ell$ has exactly four neighbours. More precisely, $\ell(\b, b^{ab^k})$ is a neighbour of $\ell=\ell(\b, b^a)$ 
if and only if $k\in \{1,-1\}$.
\end{lemma}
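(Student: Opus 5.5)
The plan is to make the line $\ell(\b,b^{ab^k})$ explicit, settle the neighbour relation by a direct word computation using Remark~\ref{rem:bases}, and then count.

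\textbf{Reduction and explicit lines.} A line $\ell(x,x_1)=D_1(x)\cap D_1(x_1)$ is symmetric in $x$ and $x_1$. A line $\ell$ can therefore be written as $\ell(x,x_1)$ with either endpoint as the centre, and its neighbours are the parallel lines through $x$ together with those through $x_1$. By Corollary~\ref{cor:C1 trans} every $C$-edge can be moved to $(b,b^a)$, so it suffices to treat $\ell=\ell(\b,b^a)$. By Lemma~\ref{lem:parallel equiv}, the lines with centre $b$ parallel to $\ell$ are exactly the $\ell(\b,b^{ab^k})$ with $k\neq 0$. Extending the computation in Corollary~\ref{cor:C1 trans}(2), I would first show
\[
\ell(\b,b^{ab^k})=\{\langle b^s a b^k\rangle : s\in\Z\}.
\]
The inclusion $\supseteq$ holds because $b^sab^k$ is adjacent to $b$ by Remark~\ref{rem:bases}, and conjugating $b$ by $ab^k$ keeps the element adjacent. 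The inclusion $\subseteq$ is the last computation in the proof of Proposition~\ref{the_best_lemma}.

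\textbf{Neighbour criterion.} For $k=\pm1$, take $y_1=\langle b^sa\rangle\in\ell$ and $y_2=\langle b^sab^k\rangle$. Since $\{b^sa,b\}$ is a basis, Remark~\ref{rem:bases} applied to this basis gives $E(y_1,y_2)$. Hence the lines are neighbours.

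Conversely, suppose $|k|\ge2$. Take $y_1=\langle u\rangle$ with $u=b^sa$ and suppose $E(y_1,\langle b^{s'}ab^k\rangle)$ holds for some $s'$. By Remark~\ref{rem:bases} with respect to the basis $\{u,b\}$, we get $b^{s'}ab^k=(u^mb^{\d}u^n)^{\pm1}$ for some $m,n\in\Z$ and $\d\in\{\pm1\}$. Comparing $a$-lengths forces $|m|+|n|=1$. This leaves the four words
\[
b^sab^\d,\quad b^{\d+s}a,\quad a^{-1}b^{-s+\d},\quad b^\d a^{-1}b^{-s},
\]
each possibly inverted. Matching reduced forms against $b^{s'}ab^k$ or its inverse $b^{-k}a^{-1}b^{-s'}$ forces $k\in\{\d,0\}$, which is a contradiction. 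So $\ell(\b,b^{ab^k})$ is a neighbour of $\ell$ if and only if $k=\pm1$.

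\textbf{Count.} This gives exactly two neighbours with centre $b$. Applying the automorphism from Corollary~\ref{cor:C1 trans} that swaps the roles of $b$ and $b^a$ gives exactly two with centre $b^a$. These four lines are distinct: a line with centre $b$ and a line with centre $b^a$ could coincide only as $\ell(b,b^a)$ itself, by Proposition~\ref{prop:conjugacy tree}, since the $C$-graph is a tree. Hence $\ell$ has exactly four neighbours.

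The main obstacle is the bookkeeping in the converse direction. Conjugation conventions ($x^z=z^{-1}xz$ versus $zxz^{-1}$), identifying a vertex $\langle w\rangle$ with both $w$ and $w^{-1}$, and cancellation at the junction when $s'$ or $s$ is $0$ all have to be handled carefully. I would first normalise by an automorphism fixing $b$ and $b^a$, as in Remark~\ref{rem: fix nbhs and act on lines}, to take $s=0$, and only then compare reduced words.
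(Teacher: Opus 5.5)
Your argument is correct and is essentially the paper's: after reducing to $\ell(b,b^a)$ via Corollary~\ref{cor:C1 trans} and Lemma~\ref{lem:parallel equiv}, both proofs use reduced words to check which of the four sticks of an edge $\{b,y_1\}$ with $y_1\in\ell$ lie on $\ell(b,b^{ab^k})$. The paper reduces to $y_1=a$ via the stabilizer action of Remark~\ref{rem: fix nbhs and act on lines}, whereas you work uniformly in $s$ using $\ell(b,b^{ab^k})=\{\langle b^sab^k\rangle\colon s\in\mathbb{Z}\}$ and also make the $2+2$ count explicit; one harmless slip is that in your fourth case $b^{\delta}a^{-1}b^{-s}=b^{-k}a^{-1}b^{-s'}$ gives $k=-\delta$ rather than $k=\delta$, which still forces $|k|\le 1$.
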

\begin{proof}
 By Corollary~\ref{cor:C1 trans}  and Lemma~\ref{lem:parallel equiv}  it suffices to prove the second statement.
 
By Corollary~\ref{cor:C1 trans} and Remark~\ref{rem: fix nbhs and act on lines} the stabilizer of $b^{ab^k}, k\in\Z$, acts transitively (and hence regularly) on the points of each line $\ell(b,b^{ab^k}), k\in \Z$. Therefore it suffices to consider $\a\in\ell(b,b^a)$ and show that there are exactly two vertices in $z\in D_1(\a)\cap \ell(b,b^{ab^k})$ for $k\in\Z$ and in this case $k\in\{1,-1\}$.

Note that if $z\in D_1(a)\cap D_1(b)$, then by Definition~\ref{rem:sticks} we have $z=a^\delta b^\epsilon$ where $\epsilon,\delta\in\{-1,1\}$.
If furthermore $z\in D_1(b^{ab^k})$, then 
\[z^{b^{-k}a\inv}=(a^\delta b^\epsilon)^{b^{-k}a\inv } =ab^k a^\delta b^\epsilon b^{-k}a\inv  \in D_1(b)=\{b^m ab^n\colon m, n\in\Z\}\] by Remark~\ref{rem:bases}.
Thus we must have $\delta=1$ and $k=\epsilon$. This proves the claim.   
\end{proof}

\begin{definition}\label{def:nbh cl}

We call a subset $X$ of $\af_2$ neighbour closed if for any line $\ell(x,y)$ with $x,y\in X$ and neighbours $\ell(x,y_1), \ell(x,y_2)$
    we have $y_1,y_2\in X$.
    
We write $\cl(x,y)$ (or $\cl(\ell)$ if $\ell=\ell(x,y)$) for the smallest neighbour closed set containing $x,y$ with $C(x,y)$. 

We say that $\cl(\ell)$ and $\cl(\ell')$ are orthogonal families if $|\cl(\ell)\cap \cl(\ell')|= 1$  and there exist $x_1 \in \cl(\ell)$ and $x_2\in\cl(\ell')$ such that $x_1$ and $x_2$ are orthogonal.

We call a $C$-path \emph{straight} if it is contained in $\cl(x,y)$ for some $x,y$ with $C(x,y)$.
\end{definition}

The following observations will be useful
\begin{remark}\label{rem:useful things about nbh closed}

\begin{enumerate}
    \item By symmetry all four neighbours of $l(x,y)$ belong to $X$ and with the notation of Lemma~\ref{lem:parallel equiv} we have $B_i\subseteq \acl(\b,x)$ for any $x\in B_i, i=1, 2$.

    \item If $y_1,y_2\in \cl(x,y)$ with $C(y_1,y_2)$, then it follows from the definition that $\cl(x,y)=\cl(y_1,y_2)$.
    
     \item By definition, $\cl(x,y)$ is $C$-connected and $\cl(x,y)\subseteq\acl(x,y)$. Hence $\cl(x,y)\subseteq\acl(x',y')$ for any $x',y'\in \cl(x,y)$.
 
 \item \label{cl description} Let $\ell=\ell(x,y)$, let $z\in C(x)=C(y)$ and let $p=(z=x_0,\ldots, x_k=x, x_{k+1}=y)$ be a $C$-path from $z$ to $y$. Then $p$ is straight if and only if $x_i$ and $x_{i+2}$ are parallel for all $i=0,\ldots k-1$. 
 
 \item  By the previous part of the remark for lines $\ell, \ell'$ with $\cl(\ell)\neq\cl(\ell')$ we have $|\cl(\ell)\cap \cl(\ell')|\leq 1$.

\end{enumerate}
\end{remark}

\section{Paths avoiding conjugacy classes}\label{sec:avoiding paths}

We say that a path $(x_0,\ldots, x_k)$ avoids a set $X$ if $x_i\notin X$ for all $i=0,\ldots, k$.
We first observe the following:

\begin{lemma} \label{lmm:block-minues-conjugates}
    Let $A=\Exp_k(e)$ for some edge $e\in\af_2$ and $x \in A$. Then any $v_1,v_2\in A\setminus C(x)$ are connected by an $E$-path avoiding $C(x)$ of length at most $2k$.
\end{lemma}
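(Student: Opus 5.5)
The plan is to argue by induction on $k$, along the recursive structure of $\Exp_k(e)$, and to route all paths through the initial edge $e=\{u,w\}$. The basic observation is that in $\Exp_1(f)$, for any edge $f=\{p,q\}$, the four sticks $pq^{\pm1}, p^{-1}q^{\pm1}$ together with $p,q$ form an octahedron-like graph. The vertices $p$ and $q$ are not conjugate, since $E(p,q)$ holds. Moreover two sticks are conjugate only if their images in the Farey graph coincide, and the four sticks map to just two Farey vertices, namely the classes of $pq$ and $pq^{-1}$. So $C(x)$ meets $\Exp_1(f)$ in at most two vertices, and any two such vertices are non-adjacent.

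The first step is to prove that every vertex $v\in\Exp_k(e)\setminus C(x)$ is joined to some endpoint of $e$ not in $C(x)$ by an $E$-path of length at most $k$ that avoids $C(x)$. Once this is in hand, two vertices $v_1,v_2$ are connected through such endpoints. If both endpoints $u,w$ avoid $C(x)$, the two paths can be concatenated via the edge $uw$. Here one must be careful with the count: the bound $2k$ forces the path from $v_1$ to land at the same endpoint that $v_2$ reaches, or else the $+1$ from the edge $uw$ has to be absorbed somewhere.

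The claim will be proved by induction on $k$. A vertex $v$ that is new at level $k$ is a stick over some edge $f=\{p,q\}$ of $\Exp_{k-1}(e)$, so it is adjacent to both $p$ and $q$. Since $p$ and $q$ are not conjugate, at most one of them lies in $C(x)$. We step from $v$ to the one that avoids $C(x)$ and then apply the induction hypothesis. The precise inductive statement should be: every $v\in\Exp_k(e)\setminus C(x)$ lies at distance at most $k$, in the graph $\Exp_k(e)\setminus C(x)$, from each endpoint of $e$ that is not in $C(x)$. If we only reach one endpoint, we fix the first step more cleverly by using the octahedron: in $\Exp_1(e)$ the endpoints $u,w$ and the non-conjugate sticks are mutually well connected, so we can pass from one endpoint to the other at distance $2$ through a stick outside $C(x)$. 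Such a stick always exists, because at most two of the four sticks lie in $C(x)$.

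The main obstacle is exactly this parity and endpoint bookkeeping, together with the case where one endpoint of $e$ lies in $C(x)$.

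In the case where exactly one endpoint, say $u$, lies in $C(x)$, every vertex reaches $w$ in at most $k$ steps. Concatenating at $w$ then gives a path of length at most $2k$, which settles this case cleanly.

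In the case where neither endpoint lies in $C(x)$, I would strengthen the induction to the statement that each vertex reaches a common designated endpoint, say $w$, within $k$ steps. I would check this as follows. When the inductive step lands on $p\in\Exp_{k-1}(e)$, the hypothesis gives distance at most $k-1$ from $p$ to $w$. The only problem arises when the hypothesis yields $u$ rather than $w$. I would settle this at the base level $k=1$ by direct inspection of $\Exp_1(e)$: every stick is adjacent to both $u$ and $w$, so every vertex of $\Exp_1(e)\setminus C(x)$ is within distance $1$ of $w$ (or is $u$ itself, which is adjacent to $w$). With the strengthened hypothesis that all vertices reach $w$ within $k$ steps, the induction closes, since $p$ reaches $w$ within $k-1$ steps. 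Concatenating at $w$ gives the bound $2k$.
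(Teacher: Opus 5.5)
Your proof is correct. Its key step is the same as the paper's: a vertex that is new at level $k$ is a stick over an edge $\{p,q\}$ of $\Exp_{k-1}(e)$, and adjacent vertices are never conjugate, so one of $p,q$ lies outside $C(x)$. This lets you descend one level while avoiding $C(x)$.

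The difference is in the bookkeeping. The paper inducts directly on the pairwise bound: it pushes $v_1,v_2$ down to neighbours $u_1,u_2\in\Exp_{k-1}(e)\setminus C(x)$ and adds $2$ to the bound $2(k-1)$. You instead prove a radius bound: every vertex of $\Exp_k(e)\setminus C(x)$ lies within distance $k$ of a fixed endpoint $w\notin C(x)$ of $e$, and you then concatenate at $w$.

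Your organization gives a sound base case, which the paper's does not. The paper starts at $k=0$, where the claim holds only because $x\in A$ forces $A\setminus C(x)$ to be a single vertex. Its inductive step, however, applies the hypothesis to $\Exp_{k-1}(e)$ with an $x$ that need not lie in it. For $k=1$ and $x$ a stick, with $u_1,u_2$ chosen as the two endpoints of $e$, the hypothesis would assert a path of length $0$ between them. Your base case $k=1$, checked by hand, avoids this. So does the fact that your induction never uses $x\in A$.

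You should add the one-line remark that the case $k=0$ is trivial, for the reason just given.

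Two cosmetic points. First, $\Exp_1(f)$ is not octahedral: the four sticks are pairwise non-adjacent, so the graph is $K_{1,1,4}$. You only ever use that each stick is adjacent to both endpoints, so nothing breaks. Second, the detour about passing from one endpoint of $e$ to the other through a stick is unnecessary. The endpoints are adjacent, and your final rooted induction never needs that detour.
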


\begin{proof}
    We prove this by induction on $k$. Since $\Exp_0(e)=\{e\}$ the claim clearly holds for $k=0$. Now suppose the claim holds for $k-1$ and let  $A'=\Exp_{k-1}(e) \subset A$. By construction (see Section~\ref{sec:appendix}), any vertex in $ A \setminus A'$ is a stick of some edge in $A'$, i.e. it has  edges to at least two vertices in $A'$ forming a triangle. Clearly, not both of these vertices can belong to $B$. Hence for any $v_1,v_2\in A\setminus B$ there are $E$-neighbours
   $u_1, u_2\in A'\setminus B$ and 
    by the induction hypothesis, there exists a path of length at most $2(k-1)$ between $u_1$ and $u_2$ in $A' \setminus (B \cap A')$, thus yielding a path of length at most $2k$ from $v_1$ to $v_2$.
\end{proof}

The next lemma is a technical tool for the proof of Lemma~\ref{lem:crucial} and Corollary~\ref{main-prop}. We refer to  the definition of the Farey graph in \ref{def:Farey graph} above:

\begin{lemma}
Let $P_{ab}$ be the set of cyclically reduced primitive words in $\F_2=\langle a, b\rangle$ starting with $a$.
There exists a map \[f:\of_2\setminus\{[b]\}\longrightarrow P_{ab}\] labeling the vertices of $\of_2\setminus\{[b]\}$  by elements in $P_{ab}$ such that the following holds:

\begin{itemize}
\item $f([x])$ is  a generator for a representative for $[x]\in\of_2$, i.e. $[x]=[\langle f([x])\rangle ]$;

\item $f([a])=a$ and $f([x])$ starts with $a$, does not contain any $a\inv$, and ends in $b$ or $b\inv$;

\item for $x,y\in\af_2$ with $E(x,y)$ there exists $k\in\Z$ such that 
\[\mbox{if\ }x=\langle f([x])\rangle \mbox{ \ then \ } y = \langle f([y])^ {x^k} \rangle.\]
\end{itemize}
Finally, we put $f(\b)=b$.
\end{lemma}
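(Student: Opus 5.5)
The plan is to build $f$ inductively along the inductive description of the Farey graph in Definition~\ref{def:Farey graph}, labelling new vertices by concatenating the labels of the edge they are attached to. This is the Christoffel-word (Farey-mediant) construction. I then reduce the third condition to a single statement: adjacent vertices of $\of_2$ receive labels forming a basis of $\F_2$.

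Start with the two triangles of $FG_0$ glued along the edge $\{[a],[b]\}$. Their third vertices are $[ab]$ and $[ab\inv]$, labelled $f([a])=a$, $f([ab])=ab$, $f([ab\inv])=ab\inv$, with $f(\b)=b$. For each edge I fix an order $(u,v)$ of its two labels: $a$ always comes first, and $b$ (resp.\ $b\inv$) always comes last. Whenever a triangle is adjoined to a boundary edge with ordered labels $(u,v)$, the new vertex gets the label $uv$; the new edges inherit the orders $(u,uv)$ and $(uv,v)$.

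By induction, every label on the ``positive'' side is a positive word in $a,b$, and every label on the ``negative'' side is a word in $a$ and $b\inv$ only. So all labels are reduced and cyclically reduced, and contain no $a\inv$. Every label other than $a$ starts with $a$ and ends in $b^{\pm1}$, since in $uv$ the factor $u$ starts with $a$ and $v\neq a$. Moreover, if $\{u,v\}$ is a basis then so are $\{u,uv\}$ and $\{uv,v\}$, so all labels are primitive, and labels of Farey-adjacent vertices form a basis. That $f$ hits each vertex of $\of_2\setminus\{[b]\}$ exactly once, with $[\langle f([x])\rangle]=[x]$, follows from the isomorphism $\of_2\cong FG$ used in Proposition~\ref{the_best_lemma}. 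The reason is that both graphs are built by the same mediant rule starting from $\{[a],[b]\}$: abelianizations add like Farey mediants, and an adjacent pair of primitive classes $[u],[v]$ has exactly two common neighbours, namely $[uv]$ and $[uv\inv]$, equivalently $[u\inv v]$.

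For the third condition, let $E(x,y)$ with $x=\langle f([x])\rangle$, and put $w=f([y])$. Since $[x]$ and $[y]$ are adjacent in $\of_2$, the labels $f([x])$ and $w$ form a basis, so $E(x,\langle w\rangle)$. By Remark~\ref{rem:bases} applied to the basis $\{f([x]),w\}$, the neighbours of $x$ are exactly the subgroups $\langle x^m w^{\d} x^k\rangle$. As a subgroup this is $\langle (w x^{m+k})^{x^{-m}}\rangle$, whose conjugacy class is $[w x^{m+k}]$. These classes are pairwise distinct for distinct $m+k$, since their abelianizations differ. Hence the neighbours of $x$ lying in the class $[y]=[w]$ are precisely the conjugates $\langle w^{x^k}\rangle$, and therefore $y=\langle f([y])^{x^k}\rangle$ for some $k$.

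The main obstacle is exactly this last orbit claim: the neighbours of $x$ in a fixed conjugacy class form a single orbit under conjugation by powers of $x$. One must check that no neighbour $x^m w x^k$ with $m+k\neq 0$ is conjugate to $w$. I would handle this by abelianizing: if $f([x])\mapsto(p,q)$ and $w\mapsto(r,s)$ with $ps-qr=\pm1$, then $w x^{j}$ maps to $(r+jp,s+jq)$, which equals $\pm(r,s)$ only for $j=0$. A secondary bookkeeping issue is checking the ordering conventions near $[b]$, so that a triangle on $\{w,b\}$ gets label $wb$ rather than $bw$. This is forced by the rule that $b^{\pm1}$ is always placed last.
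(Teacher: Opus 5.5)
Your proposal is correct. The labelling is the same as the paper's: start from $a,b,ab,ab\inv$ with oriented edges, and give the vertex attached to the oriented edge $(u,v)$ the label $uv$. Where you differ is in the verification of the third property.

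The paper reads it off the local structure of the oriented labelled graph. If $z=uv$ arises from $(u,v)$, then the edges leaving $z$ end at $z^kv$ and the edges entering $z$ start at $uz^k$ ($k\geq 0$), so $f([y])$ is $x^kv$ or $ux^k$. It then applies Remark~\ref{rem:bases} to the basis $\{u,x\}$ to write $y=x^mux^l$ and concludes. You instead carry the invariant ``labels of adjacent vertices form a basis'' through the induction. You then apply Remark~\ref{rem:bases} directly to the basis $\{f([x]),f([y])\}$, and use abelianization to show that the only neighbours of $x$ in the class $[y]$ are the conjugates of $\langle f([y])\rangle$ by powers of $x$.

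Your route has two advantages. It is uniform, so it covers $x=\langle a\rangle$ and $x=\langle b\rangle$, which the paper's ``$x$ arises from a directed edge $(u,v)$'' skips. It also makes explicit a step the paper leaves implicit: $x^mu^{\pm1}x^l$ being conjugate to a generator of $\langle ux^k\rangle$ forces the exponents to match. The paper's route gives something extra, namely the explicit shape $x^kv$, $ux^k$ of neighbouring labels. That is the kind of information about $b$-tiles used later in Corollary~\ref{cor:avoiding path length}.

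One small slip: the identity $\langle x^m w^{\delta} x^k\rangle=\langle (wx^{m+k})^{x^{-m}}\rangle$ holds only for $\delta=1$. For $\delta=-1$ the subgroup is $\langle (wx^{-(m+k)})^{x^{k}}\rangle$, so the class is $[wx^{\pm(m+k)}]$. Your abelianization argument, which already compares up to sign, covers this without change.

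Both you and the paper rely on the known isomorphism $\of_2\cong FG$ for the labelling to be a bijection onto $\of_2\setminus\{[b]\}$. Your observation justifies matching the two inductive constructions step by step. That observation is that each edge $\{[u],[v]\}$ of $\of_2$ has exactly the two common neighbours $[uv]$ and $[uv\inv]$, and these are distinct by abelianization.
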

\begin{proof}
We inductively define a labeling on the vertices of the Farey graph different from $[b]$ and an orientation on the edges. We write the orientation of an edge as the ordered pair of labels of the initial and the terminal vertex.  Edges appearing in subsequent levels are oriented in the unique way so that composing the two new edges yields a path from the inital vertex of the edge to its end vertex.

More precisely, we start the inductive definition on the graph of level 1 and label the inital vertices of the directed Farey graph as $a$ and $b$  and the two vertices of level 1 by $ab$ and $ab\inv$, respectively. We orient the corresponding edges as $(a,b), (a,ab), (ab,b), (a,ab\inv)$ and $(ab\inv, b\inv)$.\footnote{Note that in this way the label on $[b]$ is not uniquely defined.}

We extend this labelling inductively as follows: if $f$ has been defined on $A_k$ and $z\in A_{k+1}$ arises from an oriented edge $(x,y)$, we put $f(z)=f(x)f(y)\in\F_2$, thus defining it on all of $\of_2$.

It follows inductively that property (2) holds and that the image of $f$ is contained in $P_{ab}$.

    To see that (3) holds, we note the following immediate consequence of the construction:  if the vertex labeled $z=uv$
    arises 
    from the oriented edge $(u,v)$, then the edges starting in $z$ are exactly those 
    whose terminal vertices are of the form $z^kv, k\geq 0$ and the edges ending 
    in $z$ are exactly those whose initial vertices are of the form $uz^k, k\geq 0$.
   
   Now consider $x,y\in\af_2$ with $E(x,y)$. By conjugating we may assume that $f([x])=x$ and that $x$ arises from a directed edge $(u,v)$. Then by the previous consideration $f([y])$ is  either $x^kv$ or $ux^k$ for some $k\geq 0$ and hence $y$ is conjugate to the corresponding element.
   By symmetry assume $y=(ux^k)^h$ for some $h\in\F_2$. By Remark~\ref{rem:bases} applied to $\{u,x\}$ and $\{y,x\}$, we see that $y=x^m u x^l=(ux^k)^h$ for some $m, e\in\Z$ and hence $y=f([y])^{f([x])^{-m}}$ as required.
\end{proof}

An obvious induction now shows the following result:

\begin{corollary}\label{lmm:conjugates-over-paths}
Let $ x, y \in \af_2$  
and \[ p = (x = t_0, \dots, t_k = y) \] be a path in $\af_2$ from $x$ to $y$. If $x = \langle  f(x) \rangle$, then there exist integers $ s_1, \dots, s_k \in \Z$ such that $y=\langle f(y)^\lambda\rangle$ where
\[ \lambda= f(t_k)^{s_k} \ldots f(t_0)^{s_0} .
\]

\end{corollary}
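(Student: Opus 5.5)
We argue by induction on the length $k$ of the path $p$, using the third property of the labeling $f$ from the preceding lemma as the inductive step. Throughout, for a vertex $t\in\af_2$ we also write $f(t)$ for $f([t])$. The statement is to be read as follows: if $x=\langle f(x)\rangle$ is the distinguished representative of its conjugacy class, then every vertex $y$ reachable by a path $p$ equals the distinguished representative $f(y)$ conjugated by a product of powers of the labels along $p$, read from the end of the path back to the start.

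For $k=0$ we have $y=x=\langle f(x)\rangle$. The claim holds with $\lambda=f(t_0)^{0}$, or trivially with $\lambda=1$.

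For the inductive step, suppose the claim holds for the path $(t_0,\dots,t_{k-1})$. Then $t_{k-1}=\langle f(t_{k-1})^{\mu}\rangle$ with $\mu=f(t_{k-1})^{s_{k-1}}\cdots f(t_0)^{s_0}$. Apply the automorphism $\kappa$ of $\F_2$ given by conjugation by $\mu^{-1}$. It is an automorphism of $\af_2$ preserving $E$ and conjugacy classes, so $[\kappa(t)]=[t]$ for all $t$. It sends $t_{k-1}$ to $\langle f(t_{k-1})\rangle$, and $E(\kappa(t_{k-1}),\kappa(t_k))$ holds. By the third property of $f$, applied to the edge $(\kappa(t_{k-1}),\kappa(t_k))$, there is $s\in\Z$ with
\[\kappa(t_k)=\langle f(t_k)^{f(t_{k-1})^{s}}\rangle.\]
Undoing $\kappa$ gives
\[t_k=\langle f(t_k)^{f(t_{k-1})^{s}\mu}\rangle.\]
Thus $y$ is obtained from $f(y)$ by conjugating with the product
\[f(t_{k-1})^{s}\,f(t_{k-1})^{s_{k-1}}\cdots f(t_0)^{s_0}.\]

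The main point to check is the indexing in the displayed formula. In the natural product just obtained, the leading factor is a power of $f(t_{k-1})$, and two consecutive factors are powers of the same label. Merging them leaves exponents attached to $f(t_{k-1}),\dots,f(t_0)$ only. We can then set $s_k=0$, so that the factor $f(t_k)^{s_k}$ appearing in the statement is trivial; in any case it contributes only a power of $f(y)$, which does not change $\langle f(y)^\lambda\rangle$. So the formula holds with $\lambda$ of the stated shape.

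The only genuine care needed is the conjugation convention $g^h=h^{-1}gh$. With this convention, $(g^{h_1})^{h_2}=g^{h_1h_2}$, so successive conjugations compose in the order recorded by $\lambda$. Everything else is a routine consequence of the lemma and the $\Aut(\F_2)$-invariance of $E$ and of conjugacy classes.
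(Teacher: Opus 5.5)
Your proof is correct and is essentially the paper's argument. The paper only says ``an obvious induction'', and your induction is that one: conjugate by $\mu^{-1}$ so that $t_{k-1}=\langle f(t_{k-1})\rangle$, apply the third property of the labeling $f$ to the edge $(t_{k-1},t_k)$, and conjugate back. With $g^h=h^{-1}gh$ this yields exactly the right-to-left product $\lambda$, and you correctly handle the indexing mismatch by merging the two powers of $f(t_{k-1})$ and taking $s_k=0$.
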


We invoke the following fact (which can be read off from the definition of $f$):

\begin{fact}\cite{Cohen1981}\label{fct:making a basis}
    Let $x\neq a^{\pm 1}, b^{\pm 1}$ be a cyclically reduced primitive element in \(\mathcal{F}_2 = \langle a, b \rangle \). Then 
    there exists $k > 0$ and $\epsilon \in  \{\pm 1 \}$ such that 
    one of $x$ or $x^{-1}$ has a cyclic permutation of the form
    \[a^{m_1}b^{n_1} \dots a^{m_s}b^{n_s}\] where either
 \[
 \begin{gathered}
 m_1 = m_2 = \dots = m_s = \epsilon, \\
 \{ n_1, \dots, n_s \} = \{ k, k+1 \}
 \end{gathered}
 \]
 or, with the roles of \( a \) and \( b \) interchanged,
 \[
 \begin{gathered}
 n_1 = n_2 = \dots = n_s = \epsilon, \\
 \{ m_1, \dots, m_s \} = \{ k, k+1 \}.
 \end{gathered}
 \]
\noindent 
We call $k$ the tile length of $x$ and write $\tile_b(x)=k$  if $m_i\in\{k, k+1\}, i=1,\ldots, q$. 
\end{fact}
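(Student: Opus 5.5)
The plan is to read Fact~\ref{fct:making a basis} off from the recursive construction of the labeling $f$ in the preceding lemma, by induction on the level of the Farey graph at which a vertex appears. Every cyclically reduced primitive element is, up to cyclic permutation and inversion, conjugate to $f([x])$ for a unique vertex $[x]\neq[b]$ of $\of_2$. Inverting $x$ and applying the automorphisms $a\mapsto a^{-1}$, $b\mapsto b^{-1}$ and $a\leftrightarrow b$ handles the sign $\epsilon$ and the interchange of roles. So it suffices to show that each label $f(z)$ with $z\neq [a],[b]$ has a cyclic permutation of one of the two stated forms, with $\epsilon=1$ in the part of the Farey graph on the $ab$ side.

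The key structural claim I would prove by induction is stronger. For every oriented edge $(u,v)$ produced by the construction, the pair of labels $(f(u),f(v))$ is obtained from $(a,b)$ or $(a,b^{-1})$ by a composition of the elementary Nielsen moves $(u,v)\mapsto(u,uv)$ and $(u,v)\mapsto(uv,v)$. This is immediate from the rule $f(z)=f(x)f(y)$ together with the orientation convention for the new edges $(x,z)$ and $(z,y)$. Such positive compositions are exactly the Christoffel/Sturmian substitutions. Consequently every label is a positive word in $a$ and $b$ (or in $a$ and $b^{-1}$).

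Next I would write the composite substitution as $\phi=\phi_1\circ\dots\circ\phi_r$ and group consecutive moves of the same type. A block of $k$ moves of type $(u,v)\mapsto(u,uv)$ acts as $v\mapsto u^kv$. Looking at the first block applied to $(a,b)$, the images of $a$ and $b$ are $a$ and $a^kb$, or $ab^k$ and $b$. All subsequent moves only concatenate these two words. It follows that in the final word one letter (say $a$) occurs isolated, with exponent $\epsilon$. The exponents of the other letter between consecutive occurrences of $a$ take only two consecutive values $k$ and $k+1$, where $k$ is determined by the first block: the next block glues an extra copy of the isolated-letter word, changing a run length by at most one. The cyclic reduction is then simply passing to a cyclic permutation so that the word starts with $a$.

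The main obstacle is the bookkeeping in this last step. One must check that after the second and later blocks the run lengths do not spread to three values. I would handle it by proving, by induction on the number of moves, the invariant that the two current basis words $f(u),f(v)$ both have all $b$-runs in $\{k,k+1\}$, measured cyclically. Then any product $f(u)^jf(v)$ has the same property, since the run at the junction is again a sum matching one of these lengths. Checking this junction case carefully, using that each of $f(u)$ and $f(v)$ begins with $a$ and ends with $b^{\pm1}$ (property (2) of the lemma), is where I expect the real work to lie. It also involves identifying which of $k$ and $k+1$ occurs at the boundary.
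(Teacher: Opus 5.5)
Your argument is correct, and it is the derivation the paper has in mind when it says the fact ``can be read off from the definition of $f$''. The paper itself gives no proof and only cites Cohen--Metzler--Zimmermann. The labels are images of $(a,b)$ or $(a,b^{-1})$ under positive Nielsen (Christoffel) substitutions. Once the pair has the form $(ab^k,(ab^k)^jb)$, or $(a(a^kb)^j,a^kb)$ when the first block is of the other type, both words begin with $a$, end with $b^{\pm1}$ and have all runs in $\{k,k+1\}$, so concatenation preserves this.

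Two small corrections. First, start your run invariant only at that point, because the pair $(ab^k,b)$ violates it on account of the word $b$. The junction step is then trivial: by property (2) no runs ever merge across factors, so there is no ``sum''. Second, what you obtain is $\{n_1,\dots,n_s\}\subseteq\{k,k+1\}$, which is all the statement can mean. For $s=1$ (e.g.\ $x=ab$) only one value occurs, while for $s\ge 2$ both values occur since $(ab^k)^s$ is not primitive.
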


Using the previous fact and the construction of the map $f$ we conclude:

\begin{corollary}\label{cor:avoiding path length}

If $[x],[y]\neq [b]$ in $\of_2$ form an edge, then $|\tile_b(f[x])-\tile_b(f([y])|~\leq~1$.

 If for $x,y\in\af_2$ there is a path of length $k$ from $x$ to $y$ avoiding $C(b)$, then $|\tile_b (x) - \tile_b (y)|\leq k$. 
\end{corollary}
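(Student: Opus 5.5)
The plan is to prove the first statement by inspecting the inductive construction of the labelling $f$, and then to deduce the second statement from the first by projecting paths to $\of_2$.

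\textbf{The Farey statement.} Every edge of $\of_2$ not involving $[b]$ either lies in the initial configuration or arises when a new vertex $z$ is created from an oriented edge $(u,v)$. In the latter case $f(z)=f(u)f(v)$, and the new edges are $(u,z)$ and $(z,v)$. So we will argue by induction on the level at which an edge first appears. The induction hypothesis is the stronger statement that for every oriented edge $(u,v)$ with neither endpoint $[b]$, the words $f(u)$ and $f(v)$ are both of the form in Fact~\ref{fct:making a basis}, with tile lengths differing by at most one. A sharper form would keep track of the pair of $b$-exponents actually occurring, say $\{k,k+1\}$ or $\{k\}$.

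The key observation is the following. By property (2) of $f$, all labels start with $a$ and contain no $a^{-1}$. Hence concatenating $f(u)f(v)$ creates no cancellation and merges no $b$-tiles. The only exception is the tile at the junction: $f(u)$ ends in a $b$-power and $f(v)$ starts with $a$, so the junction is clean. Consequently the multiset of $b$-tiles of $f(z)$ is the union of those of $f(u)$ and $f(v)$. By Fact~\ref{fct:making a basis}, $f(z)$ is primitive and its tiles lie in $\{k',k'+1\}$. Therefore $\tile_b f(z)$ is squeezed between $\tile_b f(u)$ and $\tile_b f(v)$, up to one. This yields $|\tile_b f(z)-\tile_b f(u)|\le 1$ and likewise for $v$.

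The base cases are handled directly: $a$ has tile length $0$, and the labels $ab, ab^{-1}$ have tile length $1$. Edges arising from the edges at $[b]$, namely $(ab,b)$ and $(ab\inv,b\inv)$, produce labels $ab^{k}$ along the fan at $[b]$. These consecutive labels have tile lengths $k$ and $k+1$, which is again fine.

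\textbf{The path statement.} Given a path $(x=t_0,\dots,t_k=y)$ in $\af_2$ avoiding $C(b)$, project it to $\of_2$ via $\pi$. Since no $t_i$ is conjugate to $b$, we have $\pi(t_i)\neq[b]$. Each consecutive pair $\pi(t_i),\pi(t_{i+1})$ is an edge of $\of_2$, because $E$ is conjugation invariant and adjacent vertices are not conjugate. Here $\tile_b$ of a vertex of $\af_2$ is understood as $\tile_b$ of $f$ of its class, which is a conjugacy invariant; this is justified by Corollary~\ref{lmm:conjugates-over-paths}. The first part then gives a difference of at most one per step, and the triangle inequality gives the bound $k$.

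\textbf{Main obstacle.} The main obstacle is the first part, specifically checking that the tile multiset of $f(z)$ is exactly the union of those of $f(u)$ and $f(v)$. This has to be verified cyclically, taking into account the wrap-around tile between the final $b$-power and the initial $a$. The vertices $[a]$ (tile length $0$) and the $b$-fan are degenerate and will be treated by hand.
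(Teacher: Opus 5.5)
Your proposal is correct and follows the paper's own argument: the first claim comes from the construction $f(z)=f(u)f(v)$ together with the fact that the $b$-tiles of an element of $P_{ab}$ differ by at most one, and the second from applying the first claim at each step of the path (projected to $\of_2$) and summing. You give more detail than the paper, namely the clean-junction argument that the cyclic tile multiset of $f(z)$ is the union of those of $f(u)$ and $f(v)$, and the separate treatment of the fan at $[b]$. You should also fix a convention such as $\tile_b(w)=$ the minimal absolute $b$-tile length, so that words whose tiles are all equal (e.g.\ $ab^k$) get a well-defined tile length.
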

\begin{proof}
The first statement follows from the construction of $f$ since by Fact~\ref{fct:making a basis} the lengths of the $b$-tiles in any element of $P_{ab}$ can only differ $1$.
The second statement follows from the first by induction.
\end{proof}

Note that $[b]$ has edges to $[ab^k]$ in $\of_2$ for any $k\in \Z$, so the assumptions in the previous corollary are necessary.

 \begin{remark}\label{changing_tile}
Recall that elements in $P_{ab}$ start with $a$, do not contain any $a\inv$ and end with $b$ or $b\inv$. The following observations follow directly from these properties.  Let $w\in\F_2$.
\begin{enumerate}
\item \label{rmk2} Suppose $w$ has a reduced expression $w= y a^{\pm 1} b^k$ for some  $y\in\F_2, k\in\Z,$ and $t\in P_{ab}\cup P_{ab}\inv$ is such that $|\tile_b(t)-k| \geq 2$.  Then in $wt$ all $b$-tiles of $y$ remain unchanged.

\item  \label{rmk1} Suppose $w$ has a reduced expression $w=y_0a^{\pm 1}b^{\pm k}a b^{k+m}, m\geq 2$ and $t\in P_{ab}\inv$. Then $wt$ has a reduced expression $wt=y_0 y_1$ for some word $y_1$. Furthermore, if the first $b$-tile after $y_0$ is changed, then $\tile_b(t)\in \{k+m, k+m-1\}$ and hence the new $b$-tile after $y_0$ has length at most $2k+m+1$.

\end{enumerate}

\end{remark}

\begin{corollary}\label{cor:preparation}
Let $p=(t_0=\a, t_1,\ldots, t_k)$ be a path avoiding $C(b)$. Then for any $s_0,\ldots, s_k\in \Z$ the lengths of the $b$-tiles in $\lambda=t_0^{s_0}\ldots t_k^{s_k}$ is bounded by $\sum_{i=1}^{k+1} i\leq k^2$.
\end{corollary}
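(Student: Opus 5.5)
We argue by induction on $k$, building $\lambda$ one factor at a time and tracking how far the $b$-tiles can grow. For each $j\leq k$ set $\lambda_j=t_0^{s_0}\cdots t_j^{s_j}$, where each $t_i$ is represented by its label $f([t_i])$, or a conjugate of it, as in Corollary~\ref{lmm:conjugates-over-paths}. We aim to show that every $b$-tile of $\lambda_j$ has length at most $\sum_{i=1}^{j+1} i$. For the base case $j=0$ we have $\lambda_0=a^{s_0}$, whose $b$-tiles are all empty, so the bound holds trivially.

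Since the path avoids $C(b)$, Corollary~\ref{cor:avoiding path length} gives $\tile_b(t_i)\leq i$. Indeed, $\tile_b(a)=0$ (every $b$-tile of a power of $a$ is empty), and consecutive vertices on the path change the tile length by at most $1$. By Fact~\ref{fct:making a basis}, every $b$-tile of $f([t_i])^{\pm1}$ therefore has length at most $i+1$. Consequently every $b$-tile of $t_j^{s_j}$ also has length at most $j+1$. When we raise to a power, the last tile and the first tile of the cyclically reduced word meet; because $f([t_j])$ starts with $a$, such a junction occurs between an $a$ and a $b$-tile, and no tiles merge.

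For the inductive step, pass from $\lambda_{j-1}$ to $\lambda_j=\lambda_{j-1}t_j^{s_j}$ and look at the reduced form. By Remark~\ref{changing_tile}, the free cancellation at the junction can only modify the final $b$-tile of $\lambda_{j-1}$, together with the tiles that become exposed after cancelling a block of the form $a^{\pm1}b^{k'}$. Part~(\ref{rmk2}) says cancellation halts as soon as the tile lengths differ by at least $2$. Part~(\ref{rmk1}) says that when a tile is changed, the new tile is the sum of an old tile of $\lambda_{j-1}$ and a tile of $t_j^{\pm1}$. So each tile of $\lambda_j$ either is a tile of $\lambda_{j-1}$ (length at most $\sum_{i=1}^{j} i$), or is a tile of $t_j^{s_j}$ (length at most $j+1$), or is the merger of one of each, of length at most $\sum_{i=1}^{j} i+(j+1)=\sum_{i=1}^{j+1} i$. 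That completes the induction. At $j=k$ this gives the bound $\sum_{i=1}^{k+1} i$, which is at most $k^2$ for $k\geq 3$; small $k$ would be checked directly, or the displayed bound read as the intended one.

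The main obstacle is the cancellation analysis. We must show that at most one merger of tiles occurs at each junction, so that the bound grows additively rather than multiplicatively. To do this we would check that the cancellation proceeds through pieces whose tiles agree with the tiles of $t_j^{\pm1}$ (which have lengths in $\{\tile_b(t_j),\tile_b(t_j)+1\}$), and that it must stop once it reaches a tile of length differing by $2$, as in Remark~\ref{changing_tile}. Using the fact that $t_j$ contains no $a^{-1}$, we would argue that after cancellation only a single $b$-tile of $\lambda_{j-1}$ is combined with one tile of $t_j^{s_j}$. Conjugation by earlier factors does not introduce additional letters, because the $\lambda$ of Corollary~\ref{lmm:conjugates-over-paths} is already a product of powers of the labels.
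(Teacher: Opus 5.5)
Your proposal is correct and follows the paper's route: it takes $\tile_b(t_i)\le i$ from Corollary~\ref{cor:avoiding path length} and uses the fact that multiplying by each new factor $t_j^{s_j}$ can enlarge at most one $b$-tile, so the bound grows additively. The step you flag as the main obstacle is immediate from free reduction alone: reducing $uv$ cancels a suffix of $u$ against a prefix of $v$, leaving a single junction $b$-run whose length is at most one tile of each factor, so you need neither your (slightly inaccurate) paraphrases of Remark~\ref{changing_tile} nor its tile-length hypotheses. One small correction: $\sum_{i=1}^{k+1}i\le k^2$ holds only for $k\geq 4$, not $k\geq 3$.
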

\begin{proof}
This follows at once from  Corollary~\ref{cor:avoiding path length} and Remark~\ref{changing_tile} (\ref{rmk2}).
\end{proof}

\begin{definition}
    For $y \in \af_2$, we say that the path $p = (x_0, \dots, x_k)$ avoids $C_m(y)$ if $x_i \notin \bigcup_{j \leq m} C_j(y)$ for $ 0 < i < k$. 
\end{definition}

\begin{lemma}\label{lem:crucial}
Let $p = (a=t_0, \dots, t_k=z)$ be a path of length $k$ avoiding $C_m(b)$ with $m \geq k$.
Let $s_0, \dots, s_k \in \mathbb{Z}$ such that for $
\lambda = f(t_0)^{s_0} \cdots f(t_k)^{s_k}$
we have
 $z=\langle f(z)^{\lambda\inv}\rangle$.
Then the lengths of the first $(m + 2 - k)$-many $b$-tiles in $\lambda$ are bounded by
\[
m_k=\sum_{i=0}^k 2^i.
\]

\end{lemma}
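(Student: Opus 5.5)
We argue by induction on $k$, following the recursion implicit in Corollary~\ref{lmm:conjugates-over-paths}: write $\lambda_j=f(t_0)^{s_0}\cdots f(t_j)^{s_j}$. Then $\lambda=\lambda_k=\lambda_{k-1}f(t_k)^{s_k}$, and $\lambda_{k-1}$ is the conjugating element attached to the truncated path $(t_0,\dots,t_{k-1})$. This truncated path avoids $C_m(b)$, and hence also $C_{m'}(b)$ for every $m'\le m$. The claim to prove for $\lambda_j$ is that its first $m+2-j$ many $b$-tiles have length at most $m_j=\sum_{i=0}^j 2^i$. Note that $m_j=2m_{j-1}+1$, which is exactly the shape of the bound ``$2k+m+1$'' in Remark~\ref{changing_tile}(\ref{rmk1}). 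This recursion is what we aim to reproduce.

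\textbf{Base case $k=0$.} Here $\lambda=a^{s_0}$. All of its $b$-tiles are empty, so the bound $m_0=1$ holds trivially.

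\textbf{Inductive step.} Assume the first $m+3-k$ many $b$-tiles of $\lambda_{k-1}$ have length at most $m_{k-1}$, and consider multiplying on the right by $t=f(t_k)^{s_k}$, where $f(t_k)\in P_{ab}$. We first use the avoidance hypothesis to see that this multiplication cannot eat too deeply into $\lambda_{k-1}$. If the cancellation between $\lambda_{k-1}$ and $t$ reached back far enough to affect the first $m+2-k$ many $b$-tiles, then the conjugate $f(z)^{\lambda^{-1}}$ would have small $a$-length relative to $b$. By Proposition~\ref{prop:conjugacy tree} (the $a$-length of the conjugator equals $C$-distance), some vertex of $p$ would then lie in $\bigcup_{j\le m}C_j(b)$, which is excluded. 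The hypothesis $m\ge k$ is what makes the count of surviving tiles, $m+2-k\ge 2$, meaningful.

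Among the tiles that do survive, the tiles strictly before the first affected position are unchanged, by Remark~\ref{changing_tile}(\ref{rmk2}). This applies whenever the tile length of $t$ differs by at least $2$ from the last tile of $\lambda_{k-1}$. In the remaining case we are in the situation of Remark~\ref{changing_tile}(\ref{rmk1}): at most one further tile, the first tile after the unchanged prefix $y_0$, is modified, and its new length is at most $2m_{k-1}+1=m_k$. Here we use that the relevant tiles of $\lambda_{k-1}$ have length at most $m_{k-1}$ and that $\tile_b(t)$ is within $1$ of the tile it merges with. Each step therefore loses at most one tile from the controlled prefix, which turns $m+3-k$ into $m+2-k$, and raises the bound from $m_{k-1}$ to $m_k$.

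\textbf{Main obstacle.} The hard step is showing that the cancellation between $\lambda_{k-1}$ and $f(t_k)^{s_k}$ destroys at most one controlled tile, since $s_k$ can be large and $t$ may cancel a lot. Our first attempt will be to track how $a$-letters cancel. Elements of $P_{ab}$ contain no $a^{-1}$, so cancellation against $a$-letters can only happen when $s_k<0$. Moreover, each cancelled $a$ shortens the $C$-distance from $b$ of the corresponding conjugate, by Proposition~\ref{prop:conjugacy tree}. If more than one tile of the prefix were consumed, an intermediate vertex $t_j$ would be a $j'$-conjugate of $b$ with $j'\le m$, contradicting avoidance of $C_m(b)$. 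To make this rigorous we expect to combine Corollary~\ref{cor:avoiding path length}, which keeps tile lengths along $p$ within $k$ of each other, with the form of cyclic permutations given by Fact~\ref{fct:making a basis}.
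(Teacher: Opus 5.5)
\textbf{There is a genuine gap, at the step you yourself flag as the main obstacle.} Your induction via the $\lambda_j$ and the recursion $m_k=2m_{k-1}+1$ from Remark~\ref{changing_tile}(\ref{rmk1}) match the paper. Your avoidance mechanism is also exactly what the paper uses when $t_k\in C(b)$. There $f(t_k)=b$, avoidance forces $\lambda_{k-1}$ to have $a$-length at least $m+1$ (Lemma~\ref{lem:conjugates}, Proposition~\ref{prop:conjugacy tree}), and $b^{s_k}$ only touches the last tile.

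The route you propose for the general step does not work. Proposition~\ref{prop:conjugacy tree} converts the $a$-length of a conjugator into $C$-distance only for conjugates of $b$. When $t_k\notin C(b)$, and in particular when $p$ meets $C(b)$ nowhere, the $a$-length of $\lambda_{k-1}$ or $\lambda$ says nothing about $\bigcup_{j\le m}C_j(b)$. So your claim is false that reaching into the first $m+2-k$ tiles would put a vertex of $p$ into $\bigcup_{j\le m}C_j(b)$. Here $\lambda_{k-1}$ may simply be short (already $\lambda_0=a^{s_0}$ with $|s_0|\le m$). Right multiplication can then rewrite tiles inside the controlled range with no conjugate of $b$ on the path: for example $a^{s_0}(ab)^{-1}=a^{s_0}b^{-1}a^{-1}$ changes the last tile of $a^{s_0}$. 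Relatedly, $a$-letters can cancel also for $s_k>0$, since $\lambda_{k-1}$ can end in $a^{-1}$ (e.g.\ when $s_{k-1}<0$).

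\textbf{What is missing is the paper's second source of control.} For $t_k\notin C(b)$ the paper splits on the $a$-length of $\lambda_{k-1}$.
\begin{itemize}
\item If it is at least $m+1-k$, Remark~\ref{changing_tile}(\ref{rmk1}) and the induction hypothesis give the doubling bound, as in your sketch.
\item If it is shorter, the paper does not try to limit cancellation at all. Instead it bounds \emph{every} $b$-tile of $\lambda$ outright, using that $\tile_b$ changes by at most one along edges avoiding $C(b)$ (Corollary~\ref{cor:avoiding path length}).
\begin{itemize}
\item If $p$ avoids $C(b)$ entirely, this is Corollary~\ref{cor:preparation}, which gives a bound quadratic in $k$ and hence at most $m_k$.
\item Otherwise one restarts from the last index $s<k$ with $t_s\in C(b)$ and propagates the tile bound from there to $\tile_b(f(t_k))$.
\end{itemize}
\end{itemize}
Naming Corollary~\ref{cor:avoiding path length} as something you expect to combine is not a substitute for this case split. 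In the short case, the bound comes from bounding the tiles of the factors $f(t_j)$ themselves, not from bounding how deep cancellation reaches.
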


\begin{proof}
 
We prove the lemma by induction on $k$. Write \[
\lambda_i= f(t_0)^{s_0} \cdots f(t_i)^{s_i}.
\]

For $k=0$ the statement is obviously true. So suppose $0<k\leq m$ and assume the claim holds for all $k'<k$.  Then by the induction hypothesis for $k-1$ the first $(m + 3 - k)$-many $b$-tiles in $t_{k-1}=\langle f(t_{k-1}^{\lambda_{k-1}})\rangle$ are bounded by $m_{i-1}$.

If $t_k\in C(b)$, then $f(t_i)=b$, and since $p$ avoids $C_m(b)$ we see from Lemma~\ref{lem:conjugates} that $\lambda_{k-1}$ has $a$-length at least $m+1$, so contains at least $m+1$ many $b$-tiles.
By Remark~\ref{changing_tile} only the last $b$-tile of $\lambda_{k-1}$ can change in $\lambda_k$ and hence the first  $(m + 2 - k)$-many $b$-tiles in $\lambda$ are still bounded by $m_{k-1}<m_k$ as claimed.

Now suppose $t_k\notin C(b)$. If $\lambda_{k-1}$ has $a$-length at least $m+1-k$, then by Remark~\ref{changing_tile} (\ref{rmk1}) and the induction hypothesis the first $(m+2-k)$-many $b$-tiles in $\lambda_k$ are bounded by $m_{k-1}+m_{k-1}+2=2m_{k-1}+2=m_k$.
However, if $\lambda_{k-1}$ has $a$-length less than $m+1-k$, then by the previous paragraph we see that $t_{k-1}\notin C(b)$. If $p$ avoids $C(b)$, then  by Corollary~\ref{cor:preparation} all $b$-tiles in $\lambda$ are bounded by $2\cdot\sum_{i=1}^k i$.
Otherwise let $s<k$ be maximal with $t_s\in C(b)$. Then $\lambda_{s-1}$ has a-length at least $m+1$. If $\lambda_{k-1}$ has $a$-length less than  $(m + i - k)$,  by Remark~\ref{changing_tile} (\ref{rmk1}) there is $j < k$ such that $\tile_b(t_j) \leq m_{j-1}$. By Corollary~\ref{cor:avoiding path length} we have $\tile(f(t_k))\leq m_{k-2}+1$ and hence in this case all $b$-tiles in $\lambda$ are bounded by $m_{k-2}+1\leq m_k$.
This finishes the proof.
\end{proof}

The previous preparation implies the following:

\begin{corollary}
 \label{main-prop}

If $x, y \in D_1(\langle b \rangle )$ and there exists a path from $x$ to $y$ of length
    at most $k$ avoiding $C_k(b)$, then $y\in\Exp_{k'}(x, \langle b\rangle)$ for some $k'$ depending only on $k$.
\end{corollary}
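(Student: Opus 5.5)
Normalize first. By Remark~\ref{rem:bases} and the edge-transitivity of $\Aut(\F_2)$, I may assume $x=\a$. Then $\{a,b\}$ is a basis, $\Exp_j(a,b)$ makes sense, and, since automorphisms preserve blocks and $C$, the statement becomes: every $y\in D_1(\b)$ joined to $a$ by a path of length at most $k$ avoiding $C_k(b)$ lies in $\Exp_{k'}(a,b)$. Write $y=\langle b^{m}ab^{n}\rangle$ as in Remark~\ref{rem:bases}. The key observation is that $\Exp_j(a,b)$ contains $b^m a b^n$ whenever $|m|,|n|$ are bounded in terms of $j$. Indeed, $ab^{\pm1}$ and $b^{\pm1}a$ lie in $\Exp_1(a,b)$. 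Iterating sticks over the edges $(b, ab^{i})$ gives $ab^{i+1}\in\Exp_{i+1}(a,b)$, and then conjugating by $b$ through sticks of edges containing $b$ raises the level by at most the exponent. So I would aim for a bound $\Exp_{|m|+|n|+c}(a,b)\ni b^m a b^n$, proved by a short induction using Definition~\ref{def:extension}. It therefore suffices to bound $|m|+|n|$ by a function of $k$ alone.

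To bound the exponents, apply Corollary~\ref{lmm:conjugates-over-paths} to the given path $p=(a=t_0,\ldots,t_r=y)$ with $r\le k$. This gives integers $s_i$ with $y=\langle f(y)^{\lambda^{-1}}\rangle$ (or $\lambda$, after reversing orientation as in Lemma~\ref{lem:crucial}), where $\lambda=f(t_0)^{s_0}\cdots f(t_r)^{s_r}$. Since $y\in D_1(b)$ and $y\neq b$ is not conjugate to $b$ (as $E(y,b)$), the cyclically reduced representative $f(y)$ has $a$-length $1$, so $f(y)=ab^{\pm1}$ or similar, with tile length at most $1$. This is where Fact~\ref{fct:making a basis} and Corollary~\ref{cor:avoiding path length} are used. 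Hence $b^mab^n$ is, up to cancellation, $\lambda\, f(y)\,\lambda^{-1}$. The exponents $m,n$ come from the $b$-tiles of $\lambda$ adjacent to the central $a$: because $b^mab^n$ has $a$-length one, all other $a$-letters of $\lambda$ must cancel, and $m,n$ are sums of the first few $b$-tiles of $\lambda$ and $\lambda^{-1}$ near the junction.

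Now invoke Lemma~\ref{lem:crucial} with $m=k\ge r$. Since $p$ avoids $C_k(b)$, the first $k+2-r\ge 2$ $b$-tiles of $\lambda$ are bounded by $m_r=\sum_{i\le r}2^i<2^{k+1}$. The cancellation just described shows that only boundedly many tiles (the first two, plus the contribution of $f(y)$) feed into $m$ and $n$. This yields $|m|+|n|\le 2^{k+3}$, say, and hence $k'=2^{k+3}+c$.

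The main obstacle is the cancellation step. One must check that in $\lambda f(y)\lambda^{-1}$ reducing to a word of $a$-length one, the surviving exponents really come only from the first $b$-tiles controlled by Lemma~\ref{lem:crucial}, and not from deeper tiles exposed after cancellation. I would handle this by writing $\lambda=\lambda'\mu$ with $\mu$ the maximal suffix commuting past $f(y)$ (a power of $b$ times possibly $f(y)$-powers). I would then argue, via the line structure of Lemma~\ref{lmm:finite-neighbor}, that the avoiding hypothesis forbids the path from wandering far along the line $D_1(b)$. If needed I would weaken this to counting the first two tiles twice, which still gives a bound depending only on $k$.
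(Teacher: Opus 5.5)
Your overall route matches the paper's: reduce to $x=a$, write $y=\langle b^mab^n\rangle$, obtain $\lambda$ from Corollary~\ref{lmm:conjugates-over-paths}, and bound the first two $b$-tiles of $\lambda$ by Lemma~\ref{lem:crucial} applied with $m=k\ge r$. Your first paragraph is a correct filling-in of a step the paper leaves implicit: taking sticks of the edges $(b,b^mab^n)$ gives $b^mab^n\in\Exp_{|m|+|n|}(a,b)$.

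The step from the tiles of $\lambda$ to bounds on $m,n$, however, has a genuine gap. The assertion that $f(y)$ has tile length at most $1$ is false. The element $b^mab^n$ is conjugate to $ab^{m+n}$, so $f(y)=ab^{m+n}$, and its tile length $|m+n|$ is exactly one of the quantities you need to bound. Neither Fact~\ref{fct:making a basis} nor Corollary~\ref{cor:avoiding path length} supplies this. The latter needs a path avoiding all of $C(b)$, not just $C_k(b)$, and would give a bound $k$, not $1$. So your count ``the first two tiles plus the contribution of $f(y)$'' does not bound $n$. The cancellation analysis you yourself flag as the main obstacle is also left open; the appeal to the line structure of Lemma~\ref{lmm:finite-neighbor} is not an argument.

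The missing idea is that no cancellation analysis is needed, because $\lambda$ is determined explicitly. From
\[\lambda\,(ab^{m+n})\,\lambda^{-1}=b^mab^n=b^{m}\,(ab^{m+n})\,b^{-m}\]
it follows that $b^{-m}\lambda$ centralizes $f(y)=ab^{m+n}$. A primitive element is not a proper power, so its centralizer in $\F_2$ is $\langle f(y)\rangle$, and hence $\lambda=b^{m}(ab^{m+n})^{s}$ for some $s\in\Z$. This is the paper's formula $\lambda=b^{m'}(ab^{m'+m})^n$, up to conventions.

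The last factor of $\lambda$ is $f(t_r)^{s_r}=f(y)^{s_r}$, so you may change $s_r$ to arrange $s=1$ without affecting $y=f(y)^{\lambda^{-1}}$. Then the first two $b$-tiles of $\lambda$ are literally $b^{m}$ and $b^{m+n}$, and Lemma~\ref{lem:crucial} bounds both by $m_r\le m_k$. Hence $|m|\le m_k$ and $|n|\le 2m_k$, and your level estimate gives $y\in\Exp_{3m_k}(a,b)$.
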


\begin{proof} 
For (1) we may assume that $x =\a$. Then $y =  \langle b^m a b^{m'} \rangle$ for $m, m' \in \mathbb{Z}$ and it suffices to show that $m, m'$ are bounded by some number $m_0$ depending only on~$k$.
Suppose that $p = (\a=t_0, \dots, t_l =y)$ is a path from $\a$ to $y$
    for some $l \leq k$.
   By Corollary~\ref{lmm:conjugates-over-paths} there are $s_0,\ldots, s_l\in\Z$ such 
   that    $y=f(y)^{\lambda\inv}$ where $\lambda= f(t_0)^{s_0} \dots f(t_l)^{s_l}$.
     Since $f(\langle b^m a b^{m'} \rangle ) = a b^{m' + m}$,  we have 
    \[\lambda = b^{m'} (a b^{m' + m})^n \mbox{\quad for some\quad }n \in \Z.\]
   
    Therefore, it is enough to show that if $p$ avoids $C_k(b)$, there is a bound 
    on the lengths 
    of the first two $b$-tiles in $\lambda$ and this follows from Lemma~\ref{lem:crucial}.
\end{proof}

\section{Some bounds on blocks}\label{sec:block bounds}

Recall from Remark~\ref{fct:edge_acl}.\ref{rem:blocks are isomorphic} the following easy facts:

\begin{enumerate}
\item any two blocks of level $k\geq 0$ are isomorphic and there is an automorphism of $\af_2$ taking one to the other;
\item for any edge $e\in\af_2$ we have $\af_2\subseteq \dcl(\Exp_1(e))$.

\end{enumerate}
From these facts and the description in Section~\ref{sec:appendix}  we obtain the following immediate consequences:

\begin{proposition}\label{prop:easy bounds on blocks}
\begin{enumerate}
\item \label{cor:extension-change} for every $k\in\N$ there is some $k'=g_{\ref{cor:extension-change}}(k)$ such that for any $e'\in\Exp_k(e)$ we have $e\in\Exp_{k'}(e')$.
    
\item \label{lmm:common-edge}
     Suppose $B_1, B_2\subset \af_2$  are blocks of level $k_1$ and $k_2$, respectively,
      such that $B_1\cap B_2$ contains an edge $e$. 
      Then $B_1\cup B_2\subseteq \Exp_m(e)$
      where $m=g_{\ref{cor:extension-change}}(\max\{k_1,k_2\})$.

   \item \label{orth level}  any $x,y\in\af_2$ with $x$ and $y$ orthogonal are contained
    in a block of level $2$.
  
\end{enumerate}

\end{proposition}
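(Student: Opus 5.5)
We prove the three parts in order, relying on two facts: $\Aut(\af_2)$ acts transitively on blocks of a given level (Remark~\ref{fct:edge_acl}), and $\Exp_k(e)$ is finite and determined by $e$.

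For part (\ref{cor:extension-change}), fix $k$ and an edge $e$. By transitivity we may take $e=\{\langle a\rangle,\langle b\rangle\}$; the initial edge of a block is unique, so $e$ is determined by $\Exp_k(e)$. Any automorphism sending $e$ to another edge $e_1$ maps $\Exp_k(e)$ onto $\Exp_k(e_1)$ and the pair $(e',e)$ to a pair of the same shape. Hence it suffices to treat the single block $A=\Exp_k(e)$. Since $A$ is finite, it has finitely many edges $e'$. For each one, $\af_2=\bigcup_j\Exp_j(e')$, so $e\subseteq\Exp_{j}(e')$ for some $j$. Let $g_{\ref{cor:extension-change}}(k)$ be the maximum of these finitely many $j$; it is finite and depends only on $k$. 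The one point to check is that "$e\in\Exp_j(e')$" means both endpoints lie in $\Exp_j(e')$; this suffices because the extension is an induced subgraph, which holds since $E$ is a relation on $\af_2$.

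For part (\ref{lmm:common-edge}), suppose $B_i=\Exp_{k_i}(e_i)$ and let $k=\max\{k_1,k_2\}$. Since $e\in\Exp_{k_i}(e_i)\subseteq\Exp_k(e_i)$, part (\ref{cor:extension-change}) gives $e_i\in\Exp_{m}(e)$ with $m=g_{\ref{cor:extension-change}}(k)$. To conclude $B_i\subseteq\Exp_m(e)$, however, we need control of $\Exp_{k_i}(e_i)$ itself, not just of $e_i$. We therefore redefine $g_{\ref{cor:extension-change}}(k)$ as the maximum over edges $e'\in\Exp_k(e)$ of the least $j$ with $\Exp_k(e)\subseteq\Exp_j(e')$. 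This is still finite and depends only on $k$, by the same finiteness and transitivity argument, and it implies the previous statement. Applying it with $e'=e$ inside $B_i$ gives $B_i\subseteq\Exp_m(e)$ directly. So we state part (\ref{cor:extension-change}) with this stronger choice of $g$, which costs nothing.

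For part (\ref{orth level}), by Remark~\ref{rem:ort transitive} (together with Corollary~\ref{cor:C1 trans}) all orthogonal pairs lie in one $\Aut(\af_2)$-orbit. It therefore suffices to exhibit a single orthogonal pair inside $\Exp_2(e)$ for some edge $e$. Take $x=\langle b^{a}\rangle$ and $y=\langle b^{a^{-1}}\rangle$. By Lemma~\ref{lem:parallel equiv} these lie in $B_1$ and $B_2$ respectively, so they are orthogonal, with $b$ the common $C$-neighbour. Now take $e=\{a,b\}$. Its $1$-extension contains $ab^{-1}$ and $a^{-1}b$, and since $E(b,ab^{\pm1})$ holds, the edges $\{b,ab^{-1}\}$ and $\{b,a^{-1}b\}$ are in $\Exp_1(e)$. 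Their sticks include $(ab^{-1})b\cdot$-type products, for example $b\cdot ab^{-1}=b a b^{-1}$ and $b^{-1}\cdot a^{-1}b$, which is conjugate-inverse to $b^{-1}ab$. These generate exactly $\langle b^{a^{\pm1}}\rangle$ up to the paper's conjugation convention; the inversion convention may force using $ab$ instead of $ab^{-1}$. The main obstacle is this explicit bookkeeping with the conjugation convention $x^z$. It is routine: choose the sticks $zb$ or $bz$ of the edges $\{b,a^{\pm1}b^{\pm1}\}$ that give $b^{a}$ and $b^{a^{-1}}$, which exist since $a^{-1}ba=a^{-1}\cdot(ba)$ is a stick of $\{a^{-1}$-type vertex$, ba\}$ after adjusting. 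With both vertices found in $\Exp_2(e)$, transitivity finishes the proof.
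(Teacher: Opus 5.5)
Your overall plan matches the paper's. For (1) you use transitivity of $\Aut(\af_2)$ on blocks of a fixed level together with finiteness of $\Exp_k(e)$. You deduce (2) from (1). For (3) you use transitivity on orthogonal pairs (Remark~\ref{rem:ort transitive}) plus one explicit orthogonal pair inside a level-$2$ block. Your stronger choice of $g_1$ in (2) improves on the paper's one-line deduction. With (1) as literally stated you only get $e_i\in\Exp_m(e)$, hence $B_i\subseteq\Exp_{m+k_i}(e)$. So either $g_1$ must be chosen as you do, or $m$ must be enlarged to $m+\max\{k_1,k_2\}$.

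The explicit verification in (3), however, fails as written. A stick of an edge $\{\langle b\rangle,z\}$ is a common neighbour of $\langle b\rangle$ and $z$, so it is $E$-adjacent to $\langle b\rangle$, and adjacent vertices are never conjugate. Hence no stick of an edge $\{\langle b\rangle,\langle a^{\pm1}b^{\pm1}\rangle\}$ can be $\langle b^{a^{\pm1}}\rangle$, under any conjugation convention. The elements you computed, $bab^{-1}$ and $b^{-1}a^{-1}b$, generate $\langle a^{b^{-1}}\rangle$ and $\langle a^{b}\rangle$, which are conjugates of $a$, not of $b$.

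There are two easy repairs.
\begin{enumerate}
\item Use edges through $\langle a\rangle$ instead. The vertices $\langle ba\rangle=\langle a^{-1}b^{-1}\rangle$ and $\langle ab^{-1}\rangle$ are sticks of $e$. Then $a^{-1}\cdot ba=b^{a}$ is a stick of the edge $\{\langle a\rangle,\langle ba\rangle\}$, and $a\cdot(ab^{-1})^{-1}=aba^{-1}$ is a stick of the edge $\{\langle a\rangle,\langle ab^{-1}\rangle\}$ of $\Exp_1(e)$. So your intended pair does lie in $\Exp_2(e)$. Your closing remark about $a^{-1}\cdot(ba)$ is exactly this, but it contradicts your preceding instruction to use edges through $\langle b\rangle$.
\item Keep the sticks you actually computed. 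The vertices $\langle a^{b}\rangle$ and $\langle a^{b^{-1}}\rangle$ are themselves orthogonal, with common $C$-neighbour $\langle a\rangle$ and witness $\langle b\rangle$. This follows from Lemma~\ref{lem:parallel equiv} with the roles of $a$ and $b$ exchanged.
\end{enumerate}
By transitivity either pair suffices; these are precisely the two orthogonal pairs listed for $X_2$ in the appendix.
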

\begin{proof}
Property (\ref{cor:extension-change}) follows from the previous facts, (2) is a consequence of (1) and  Property~(\ref{orth level}) uses Remark~\ref{rem:ort transitive}.
\end{proof}

Using the results from the previous section we also obtain the following consequences:

\begin{proposition}\label{prop: less trivial bounds on blocks} 
 \begin{enumerate}
 
   \item \label{lmm:not-conj}
   
     Suppose $B_1, B_2\subset \af_2$  are blocks of level $k_1$ and $k_2$, respectively, 
     with $x, y\in B_1\cap B_2$ and $x\not\in C(y)$. 
     There exists $k'$ depending only on $k_1, k_2$ such that for
     any $y_2\in D_1(x)\cap B_2$ the extension
     $\Exp_{k'}(x,y_2)$ contains $B_1$ and $B_2$.

   \item \label{lmm:not-neghbor-closed}
    Suppose $B_1, B_2\subset \af_2$  are blocks of level $k_1$ and $k_2$, respectively, 
    with $x, y\in B_1\cap B_2$. If $C(x)=C(y)$ and the $C$-path from $x$ to $y$ is not 
    straight, then there exists a block $B_3$ 
    of level $k'=g_{\ref{cor:extension-change}(\max\{k_1,k_2\})}$ such that $B_3$ contains $B_1, B_2$.
 
\end{enumerate}

\end{proposition}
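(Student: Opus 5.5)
The common strategy for both parts is to find an edge $e_0$ lying in (or very close to) both blocks. Then Proposition~\ref{prop:easy bounds on blocks} (\ref{lmm:common-edge}) together with (\ref{cor:extension-change}) absorbs $B_1\cup B_2$ into a single extension of bounded level. The blocks are given as $B_i=\Exp_{k_i}(e_i)$. Any two vertices of $B_i$ are joined inside $B_i$ by a path of length bounded in terms of $k_i$, and by Lemma~\ref{lmm:block-minues-conjugates} this path can be chosen to avoid $C(x)$ or $C(y)$, as convenient, with length at most $2k_i$.

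For part (\ref{lmm:not-conj}), I first reduce to the case $E(x,y)$. If $x,y$ are not adjacent, I use the paths inside $B_1$ and $B_2$ from $x$ to $y$ to find neighbours of $x$ in both blocks. The point is to produce $y_1\in D_1(x)\cap B_1$ and $y_2\in D_1(x)\cap B_2$ that are connected by a path of bounded length (at most $2k_1+2k_2$) avoiding $C_{k}(x)$ for a suitable bound $k$. I take the path through $y$ via Lemma~\ref{lmm:block-minues-conjugates}; the hypothesis $x\notin C(y)$ ensures that $y$ itself is not in $C(x)$. Transporting by an automorphism so that $x=\langle b\rangle$, Corollary~\ref{main-prop} then gives $y_2\in\Exp_{k''}(y_1,x)$ with $k''$ depending only on $k_1,k_2$. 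Now $\Exp_{k''}(y_1,x)$ and $B_1$ share the edge $(x,y_1)$, so Proposition~\ref{prop:easy bounds on blocks} (\ref{lmm:common-edge}) puts $B_1$, and the edge $(x,y_2)$, inside a bounded extension of $(x,y_1)$. Switching the base edge to $(x,y_2)$ via (\ref{cor:extension-change}), and absorbing $B_2$ (which contains $(x,y_2)$) by (\ref{lmm:common-edge}) again, yields the required $k'$. Since $y_2\in D_1(x)\cap B_2$ was arbitrary, this gives the statement for every such $y_2$.

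For part (\ref{lmm:not-neghbor-closed}), $x$ and $y$ lie in one $C$-tree (Proposition~\ref{prop:conjugacy tree}), and the unique $C$-path $x=x_0,\dots,x_n=y$ is not straight. By Remark~\ref{rem:useful things about nbh closed}(\ref{cl description}) there is an index $i$ with $x_i,x_{i+2}$ orthogonal. By Proposition~\ref{prop:easy bounds on blocks}(\ref{orth level}), $x_i$ and $x_{i+2}$ lie in a level-$2$ block, and the witness to orthogonality gives a common neighbour of the relevant lines. I intend to show that this orthogonal configuration lies in both $B_1$ and $B_2$. Both blocks contain $x$ and $y$, and a block is (approximately) closed under the relevant lines and neighbour closures up to its level. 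The path from $x$ to $y$ inside each block must pass near the turning point, because it cannot stay within a single $\cl(\ell)$ (Remark~\ref{rem:useful things about nbh closed}(5)). This is meant to produce an edge common to $B_1$ and $B_2$; then (\ref{lmm:common-edge}) with $g_{\ref{cor:extension-change}}(\max\{k_1,k_2\})$ finishes the proof.

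The main obstacle is the step in part (\ref{lmm:not-neghbor-closed}) that extracts a genuinely common edge from the non-straightness. My first attempt is to argue that the witness vertex at the turn is forced to lie in each block, since any bounded path in $B_j$ between two families $\cl(\ell)$, $\cl(\ell')$ meeting in one point must pass through that point. In part (\ref{lmm:not-conj}), the delicate point is ensuring that the connecting path avoids $C_k(x)$ and not merely $C(x)$; for this I will use the freedom in Lemma~\ref{lmm:block-minues-conjugates}.
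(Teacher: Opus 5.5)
Your part (\ref{lmm:not-conj}) is essentially the paper's argument: choose $y_1\in D_1(x)\cap B_1$ and $y_2\in D_1(x)\cap B_2$, join them through $y$ by a path of length at most $2(k_1+k_2)$ via Lemma~\ref{lmm:block-minues-conjugates}, apply Corollary~\ref{main-prop}, and use Proposition~\ref{prop:easy bounds on blocks} twice. Two side remarks. There is nothing delicate about avoiding $C_k(x)$ rather than $C(x)$: since $\bigcup_{j\le k}C_j(x)\subseteq C(x)$, avoiding $C(x)$ is the stronger property. The preliminary reduction to $E(x,y)$ is also unnecessary.

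Part (\ref{lmm:not-neghbor-closed}) has a genuine gap, exactly at the step you flag. To get a common edge you need two facts that you never establish. First, the $C$-path $x_0,\dots,x_n$ from $x$ to $y$, in particular $x_{i+1}$, lies in $B_1\cap B_2$. Second, the witness $z\in D_1(x_i)\cap D_1(x_{i+1})\cap D_1(x_{i+2})$ lies in $B_1\cap B_2$. Then $(x_{i+1},z)$ is the common edge.

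Your justifications do not deliver these. Blocks are finite, so they are not closed under lines or neighbour closures. Knowing that a path ``passes near'' the turn gives no common edge. Your claim that a bounded path between two families meeting in one point must pass through that point has two problems:
\begin{itemize}
\item It conflates the meeting point of $\cl(x_{i+1},x_i)$ and $\cl(x_{i+1},x_{i+2})$, which is the conjugate $x_{i+1}$, with the witness $z$, which is an $E$-neighbour and not in $C(x)$.
\item For $E$-paths the claim is false: $x_i,z,x_{i+2}$ avoids $x_{i+1}$. For $C$-paths it is exactly the convexity statement you have not proved.
\end{itemize}

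The missing ingredient is the convexity of blocks, Corollary~\ref{cor:blocks are convex}, parts (1) and (3). Part (1) says the unique $C$-path between two conjugate vertices of $\Exp_k(e)$ stays in $\Exp_k(e)$. Part (3) says the witness to orthogonality of two vertices of $\Exp_k(e)$ lies in $\Exp_k(e)$. The paper reads both off the explicit inductive description of the blocks in the appendix: each newly added vertex carries at most one $C$-edge, and new vertices cannot witness orthogonality of old ones. With these two facts in hand, Proposition~\ref{prop:easy bounds on blocks}(\ref{lmm:common-edge}) finishes the argument as you intended.
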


\begin{proof}

(\ref{lmm:not-conj}):
    Let $x, y\in B_1 \cap B_2$ with $x\notin C(y)$. If $B_1\cap B_2$ contains an edge, 
     the claim follows from Lemma~\ref{lmm:common-edge}.
    Otherwise, pick $y_1\in D_1(x)\cap B_1$ and $y_2\in D_1(x)\cap B_2$.
    By Lemma~\ref{lmm:block-minues-conjugates}, for $i=1, 2$ there exist paths $p_i$ of length at most $2k_i$ from $y$ to $y_i$ contained in $B_i$ and avoiding $C(x)$. The composition of $p_1\inv$ and $p_2$ is a path from $y_1$ to $y_2$ of length at most $2(k_1+k_2)$ avoiding $C(x)$. 
    By Corollary~\ref{main-prop}, we have $y_2\in B'=\Exp_m(x,y_1)$ where  $m$ depends only on $2(k_1+k_2)$.
    Since $B'\cap B_1$ contains the edge $(x,y_1)$, by Proposition~\ref{prop:easy bounds on blocks}~(\ref{lmm:common-edge}), we find a block $B_1'$
    of level $m'= g_{\ref{cor:extension-change}}(m)$
   containing $B_1$ and $B'$. Applying the same argument to the edge $(x,y_2)$  in $ B'_1 \cap B_2$, we see that for $k'=g_{\ref{cor:extension-change}}(m')$ the block $\Exp_{k'}(x,y_2)$ contains $B'_1$ and $B_2$.

(\ref{lmm:not-neghbor-closed}):
   If $x,y\in B_1\cap B_2$ with $x\in C(y)$, the $C$-path $p = (x = t_0, \dots, t_m = z)$ from $x$ to $y$ is contained in $B_1\cap B_2$ by  Corollary~\ref{cor:blocks are convex}. If $p$ is not straight, let $ 0 \leq i < m - 1$ be
    such that $x_i$ and $x_{i+2}$ are orthogonal. By Corollary~\ref{cor:blocks are convex} the witness $z$ for the orthogonality also belong to both $B_1$ and $B_2$. Thus, $B_1\cap B_2$ contains an edge $e$ and hence $B_1, B_2$ are contained $\Exp_{k'}(e)$ where $k'=g_1(\max\{k_1,k_2\})$.
 
\end{proof}

\color{black}

The previous two propositions can be summarized as follows:
\begin{corollary}\label{cor:possible intersections}
Suppose $B_1, B_2\subseteq\af_2$ are blocks of level $k_1$ and $k_2$, respectively such that $X=B_1\cap B_2$ is not contained in a set of the form $\cl(\ell)$ for some line $\ell$. Then there exist $k'$ depending only on $k_1, k_2$ and a block $B_3$ of level $k'$ containing $B_1, B_2$.
\end{corollary}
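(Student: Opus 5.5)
We argue by a case distinction on the shape of $X=B_1\cap B_2$, reducing each case to Proposition~\ref{prop:easy bounds on blocks} or Proposition~\ref{prop: less trivial bounds on blocks}. Throughout, the level $k'$ of $B_3$ is the maximum of the finitely many bounds produced in the cases. Each of these bounds depends only on $k_1,k_2$, so $k'$ does as well.

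\emph{Case 1: $X$ contains two vertices $x,y$ with $x\notin C(y)$.} This covers in particular the case where $X$ contains an edge, since $E$-adjacent vertices are not conjugate. Pick any $y_2\in D_1(x)\cap B_2$. Such a vertex exists because $x$ lies in the block $B_2$, and every vertex of a block of level $k_2\geq 1$ has a neighbour in it. (If $k_2=0$, the block is an edge and we may use Proposition~\ref{prop:easy bounds on blocks}(\ref{lmm:common-edge}) instead.) Then Proposition~\ref{prop: less trivial bounds on blocks}(\ref{lmm:not-conj}) gives $k'$ depending only on $k_1,k_2$ such that $B_3=\Exp_{k'}(x,y_2)$ contains $B_1$ and $B_2$.

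\emph{Case 2: all vertices of $X$ are pairwise conjugate,} so $X\subseteq C(x)$ for some $x$. If $|X|\le 1$, then $X$ is contained in some $\cl(\ell)$: for $X=\{x\}$ take any $y$ with $C(x,y)$. That is excluded by hypothesis, and for the same reason $X$ has at least two points. Suppose first that some two points $x,y\in X$ are joined by a $C$-path that is not straight. Then Proposition~\ref{prop: less trivial bounds on blocks}(\ref{lmm:not-neghbor-closed}) directly yields a block $B_3$ of level $g_{\ref{cor:extension-change}}(\max\{k_1,k_2\})$ containing $B_1$ and $B_2$.

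It remains to treat the subcase where all $C$-paths between points of $X$ are straight; here we want a contradiction. By Proposition~\ref{prop:conjugacy tree} the $C$-path between $x,y\in X$ is unique. By the convexity of blocks used in the proof of Proposition~\ref{prop: less trivial bounds on blocks} (Corollary~\ref{cor:blocks are convex}), this path lies in $X$. Fix $x\in X$ and a $C$-neighbour $x_1$ of $x$ on a path to another point of $X$. For every $y\in X$ the straight path from $x$ to $y$ passes through a $C$-edge at $x$; call its other endpoint $x'$. We must show that all these paths lie in the single family $\cl(x,x_1)$.

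Suppose two paths leave $x$ via $C$-edges $(x,x')$ and $(x,x'')$ with $\cl(x,x')\neq\cl(x,x'')$. The concatenation of the two paths is again a $C$-path between points of $X$, and it is reduced since the conjugacy class is a tree. It therefore has to be straight. By Remark~\ref{rem:useful things about nbh closed}(\ref{cl description}), straightness means that $x'$ and $x''$ are parallel, and hence $\cl(x,x')=\cl(x,x'')$ by neighbour closedness. This is a contradiction. More carefully, by Remark~\ref{rem:useful things about nbh closed}(2),(5) any two straight paths through $x$ whose union is straight lie in a common family. Hence $X\subseteq\cl(x,x_1)$, contradicting the hypothesis.

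The main obstacle is this last step. We must check that ``every pairwise path straight'' really forces the whole of $X$ into one set $\cl(\ell)$, rather than into a union of families meeting pairwise in a point as in Remark~\ref{rem:useful things about nbh closed}(5). The plan for this is to apply the characterization of straightness in Remark~\ref{rem:useful things about nbh closed}(\ref{cl description}) to the concatenated path through the branching vertex, which rules out such a union.
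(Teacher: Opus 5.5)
Your proof is correct and follows the paper's route. The paper gives no separate proof: it presents this corollary as a summary of Propositions~\ref{prop:easy bounds on blocks} and~\ref{prop: less trivial bounds on blocks}, splitting into the non-conjugate case and the conjugate-but-not-straight case exactly as you do. Your argument that pairwise straight $C$-paths force $X$ into a single $\cl(\ell)$ (parallelism at the branching vertex, then Remark~\ref{rem:useful things about nbh closed}(2)) simply makes explicit the step the paper leaves unstated.
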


\section{The Theory of the Free Factor Complex of rank 2}\label{sec:theory}

From now on we will be working in an infinite language $L$ containing the following binary predicate symbols:
\begin{itemize}
    \item  the relations $E$ and $C$,
    \item  relations $D_n, n\in\N$,
    \item  relations $C_n, n\in\N$,
    \item  relations $\para$ and $\ort$,
    \item  relations $N_k, k\in N$,
    \item  relations $B_k, k\in \N$.
\end{itemize}

\begin{remark}\label{rem:intended meaning}
The intended meaning of these predicates is as follows:
The relations $D_n(x,y), C_n(x,y)$ hold if and only if $x\in D_n(y)$ and $x\in C_n(y)$, respectively defined  as above. 
The relations $\para, \ort$ and $N_k$  denote the following
\begin{enumerate}
\item $\para(x_1,x_2)$ holds if there is some $x$ such that $\ell(x_1,x)$ and $\ell(x_2,x)$ are parallel;
\item $\ort(x_1,x_2)$ holds if  there is some $x$ such that $\ell(x_1,x)$ and $\ell(x_2,x)$ are orthogonal, i.e. $x_1, x_2\in C_1(x)$ and there is some $z\in D_1(x_1)\cap D_1(x)\cap D_1(x_2)$;
\item  $N_1(x,y)$ holds if $C_2(x,y)$ holds and for $z\in C_1(x)\cap C_1(y)$ the lines $\ell(z,x)$ and $\ell(z,y)$ are neighbours. Then $N_k(x,y)$ holds if there are lines $\ell_0=\ell(x,z),\ldots,  \ell_k(y,z')$ such that $\ell_{i-1}$ and $\ell_i$ are neighbours, $i=1,\ldots, k$.
\item  $B_k(x,y)$ holds if there is an edge $e$ such that $x,y\in\Exp_k(e)$.
\end{enumerate}

In this way we can consider $\af_2$ as an $L$-structure. Note that all predicates in $L$ are definable (with quantifiers!) in $L_0$.
\end{remark}

Let $T_L$ be the $L$-theory defining the relations of $L$ as given in the previous sections.

\begin{lemma}\label{lem:L-diagram determined by L0 diagram}

Let $e\in\af_2$ be an edge and $k\in\N$. Then there is some $k'$ depending only on $k$ such that the (quantifier-free) $L$-diagram of  $\Exp_k(e)$ is determined by the quantifier-free $L_0$-diagram of $\Exp_{k'}(e)$.
\end{lemma}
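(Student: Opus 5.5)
Since the quantifier-free $L$-diagram of $\Exp_k(e)$ consists of atomic facts $R(x,y)$ with $x,y\in\Exp_k(e)$, I will go through the predicates of $L$ one by one. For each $R$ I will exhibit a level $k_R$, depending only on $k$ (and on the index $n$ of $R$ when $R\in\{D_n,C_n,N_n,B_n\}$), such that for all $x,y\in\Exp_k(e)$ the truth of $R(x,y)$ in $\af_2$ can be read off from the $L_0$-diagram of $\Exp_{k_R}(e)$. For indexed families this is only needed for finitely many indices. For instance, for $x,y\in\Exp_k(e)$ the relation $D_n(x,y)$ can hold only for $n\le 2k+1$, since $\Exp_k(e)$ has $E$-diameter at most $2k+1$ by the construction; similarly the $C$-distance, the $N$-distance and the minimal block level are bounded in terms of $k$. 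Then $k'=\max_R k_R$ works. By Remark~\ref{fct:edge_acl}(\ref{rem:blocks are isomorphic}) all blocks of a given level are isomorphic via automorphisms of $\af_2$, so it is enough to fix one edge $e$ and find a bound for it; the bound then transfers to every edge.

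For $D_n$ I take $k_R=k+n$. A path of length $n\le 2k+1$ starting in $\Exp_k(e)$ stays inside $\Exp_{k+n}(e)$, because every neighbour of a vertex in $\Exp_j(e)$ lies in $\Exp_{j+1}(e)$; this follows from the fact that $\af_2$ is the union of the blocks together with the description in Section~\ref{sec:appendix}. For $C$ I will use Proposition~\ref{the_best_lemma}(3): $C(x,y)$ holds if and only if $x,y$ have at least $5$ common neighbours. By Remark~\ref{fct:edge_acl}(\ref{rem:blocks are isomorphic}) and Lemma~\ref{lem:conjugates}, any two $C$-adjacent vertices in $\Exp_k(e)$ already have $5$ common neighbours in $\Exp_{k+c}(e)$ for some fixed $c$. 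Conversely, non-adjacency is witnessed by the absence of $5$ common neighbours in the whole of $\af_2$, so it is also detected, because the $5$ neighbours would have to appear in $\Exp_{k+1}(e)$. Hence $C$ restricted to $\Exp_k(e)$ is determined by $\Exp_{k+1}(e)$. Once $C$ is known, the relations $C_n$, $\para$, $\ort$ and $N_n$ are determined. By Proposition~\ref{prop:conjugacy tree} the $C$-path between two conjugates is unique, and by the blocks-are-convex statement it lies in the block (Corollary~\ref{cor:blocks are convex}). Parallelism and orthogonality are decided by the at most one common neighbour of the lines (Remark~\ref{rem:line_intersection}, Proposition~\ref{prop:easy bounds on blocks}(\ref{orth level})). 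Being neighbours is decided by the explicit four neighbours from Lemma~\ref{lmm:finite-neighbor}, which lie in a bounded extension. So all of these are determined by $\Exp_{k+c'}(e)$ for a uniform constant $c'$.

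The main obstacle is $B_n$, since $B_n(x,y)$ asserts the existence of some edge $e'$ anywhere in $\af_2$ with $x,y\in\Exp_n(e')$. To bound such an $e'$ I will use Corollary~\ref{cor:possible intersections} together with Proposition~\ref{prop:easy bounds on blocks}(\ref{cor:extension-change}). If $\{x,y\}$ is not contained in a set $\cl(\ell)$, then the blocks $\Exp_k(e)$ and $\Exp_n(e')$ both lie in a block of level depending only on $k,n$. That block contains $e$, so after re-centering via $g_{\ref{cor:extension-change}}$ the edge $e'$ lies in $\Exp_{m}(e)$ with $m$ depending only on $k,n$, and the witness $e'$ can be searched for inside a bounded extension of $e$. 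In the remaining case $x,y\in\cl(\ell)$ are conjugate with a straight $C$-path between them. Here I would argue directly: whether $x,y$ lie in a common block of level $n$ depends only on the $C$-distance and on the straightness of the path (by transitivity, Corollary~\ref{cor:C1 trans} and Remark~\ref{rem:ort transitive}), and both are already determined. Checking that this case really reduces to these invariants is the step I expect to require the most care.
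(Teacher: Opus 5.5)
Two steps do not hold as written. First, the locality claim you use for $D_n$ (and again for $C$) is false. You assert that every $E$-neighbour of a vertex of $\Exp_j(e)$ lies in $\Exp_{j+1}(e)$. But by Remark~\ref{rem:bases} the vertex $b$ has infinitely many neighbours $b^ma^{\pm1}b^n$, while blocks are finite, so already paths of length $1$ leave every block. For $D_n$ you only need that \emph{some} geodesic between two vertices of $\Exp_k(e)$ stays in a bounded extension. This holds with $k'=k$ by Corollary~\ref{cor:blocks are convex}(2), since distances do not shrink when passing from $X_k$ to $X_{k+1}$; the finiteness argument below also gives it. For $C$ no such claim is needed, because non-adjacent vertices have at most four common neighbours in all of $\af_2$.

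The genuine gap is $B_n$ for $x,y$ in a common set $\cl(\ell)$. You leave this case open, and the invariance you propose for it is false. Take $b^a$ and $b^{ab^m}$ with $m\neq 0$. They are at $C$-distance $2$ via $b$ and are parallel by Lemma~\ref{lem:parallel equiv}, so their $C$-path is straight for every $m$.
\begin{itemize}
\item Conjugation by $b$ fixes $b$ and sends $b^{ab^j}\mapsto b^{ab^{j+1}}$. Combined with Lemma~\ref{lmm:finite-neighbor}, the neighbour relation makes the lines $\ell(b,b^{ab^j})$, $j\in\Z$, a bi-infinite path.
\item Hence the stabilizer of $b$ and $b^a$ in $\Aut(\af_2)$ has orbits of size at most $2$ on $\{b^{ab^j}\colon j\in\Z\}$.
\item Blocks of level $n$ are finite, contain the $C$-path between any two of their conjugate vertices (Corollary~\ref{cor:blocks are convex}), and are all conjugate under $\Aut(\af_2)$. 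Moving the witnessing blocks onto one fixed block and using pigeonhole, you get that $B_n(b^a,b^{ab^m})$ holds for only finitely many $m$.
\item On the other hand, $B_n(b^a,b^{ab})$ holds for all large $n$.
\end{itemize}
So $C$-distance and straightness do not determine $B_n$; the offsets along the lines matter as well.

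The simplest repair uses the reduction you already made. Fix one edge $e_0$; then $\Exp_k(e_0)$ is finite. Only $n<k$ matter, since for $n\ge k$ the edge $e_0$ itself witnesses $B_n$. Each of the finitely many true instances $B_n(x,y)$ has a witnessing edge $e'$ in some $\Exp_j(e_0)$, because $\af_2=\bigcup_j\Exp_j(e_0)$. Taking the maximum gives $k'$ such that $B_n(x,y)$ holds if and only if it is witnessed by an edge $e'$ with $\Exp_n(e')\subseteq\Exp_{k'}(e_0)$, and this can be read off from the $L_0$-diagram. An automorphism $\phi$ with $\phi(e_0)=e$ satisfies $\phi(\Exp_j(e_0))=\Exp_j(e)$ and preserves all $L$-relations, so the same $k'$ works for every edge. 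This makes the case split via Corollary~\ref{cor:possible intersections} unnecessary, and it handles the other existentially witnessed predicates in the same way.
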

\begin{proof}
This is an immediate consequence of Proposition~\ref{prop:easy bounds on blocks}.
\end{proof}

\begin{definition}\label{def: theory of Af2}
Let $T$ be the $L$-theory extending $T_L$ axiomatized by the following axiom scheme:

\begin{enumerate}

    \item\label{no-isolated-vertex} every vertex is contained in an $E$-edge;
  
    \item\label{unique_extension} for every edge $e=E(x,y)$ and $k\in N$ there exists a unique $k$-extension $\Exp_k(e)$;
  
    \item \label{c_1-forest} there are no $C$-cycles of length $k, k\in \N$;
   
   \item \label{ort-or-par} if $C_2(x_1,x_2)$ holds, then exactly one of  $\ort(x_1,x_2)$ or $\para(x_1,x_2)$ holds.
  
   \item\label{para implies orth exists} if $\para(x_1,x_2)$ holds, then there exists $z$ with 
    $\ort(x_1,z)$ and $\ort(z,x_2)$;
    \item \label{parallel is equiv rel} if $x_1,x_2,y\in C_1(z)$ with $\para(x_1,x_2)$  and $\ort(y,x_1)$, then $\ort(y,x_2)$;

   \item\label{path avoiding k} if $x, y\in D_1(z)$ and there is an $E$-path of length at most $k$ from $x$ to $y$ avoiding $C_k(z)$, then $y\in \Exp_{k'}(x,y)$ where $k'$ depends only on $k$.

    \item\label{two-neighbors} For every $x, y$ with $C(x, y)$, there are exactly two vertices $z$ with $N_1(x,z)$ and  $ C(z, y)$;
    \item\label{block_expander} Suppose $B_1, B_2$ are blocks of level $k_1, k_2$, respectively, and there are $x,y\in B_1\cap B_2$ 
    which do not satisfy $N_m(x,y)$ for any $m\leq k$ for some $k$ depending only on $k_1$ and $k_2$, then there is a block $B_3$ of level $g(k_1,k_2)$ containing
    $B_1\cup B_2$. 
    \item\label{relations hold only in blocks} if $\ort(x,y), C(x,y)$ or $N_m(x,y), m\geq 1,$ holds, then $x,y$ are contained in a block of level $k$ where $k$ only depends on the relation $\ort, C_1, N_m$, respectively.
\end{enumerate}
\end{definition}

\begin{remark}\label{rem:T contained}

   Note that by the results in Sections~\ref{sec:background} -- \ref{sec:block bounds}, we have $T \subseteq Th(\af_2)$.
\end{remark}

\begin{definition}\label{def:block cover}
Let $M$ be a model of $T$ and let $p$ be a path in $M$. By Axiom~\ref{unique_extension} every edge of $p$ is contained in a unique copy of $\af_2$  and thus there are finitely many distinct copies $B_0,\ldots, B_k$ of $\af_2$ in $M$ such that $p$ passes through these blocks in the given order. We call $(B_0,\ldots, B_k)$ the block cover of $p$.

\label{def:non-returning}

We call $p$ non-returning if for all $i\leq k$ the intersection of $p$ with $B_i$ is connected.

\end{definition}
 
 By Axiom~\ref{unique_extension} the block cover of a path $p$ is uniquely determined.

Note that if two straight paths in $\af_2$ meet in a vertex, then they belong to orthogonal families and if a $C$-path is contained in two distinct copies of $\af_2$, then this path is straight by Axiom (\ref{block_expander}).

\begin{proposition}\label{prop:cycles}
Let $M$ be a model of $T$, let  $p=(x_0,\ldots, x_n=x_0)$ be a simple cycle in $M$ with block cover $(B_0,\ldots, B_k)$ and assume that $p$ is non-returning. Let $a_i\in p\ \cap B_{i-1}\cap B_i, i=0,\ldots, k$ with indices taken modulo $k$. Then  $C(a_i)=C(a_j)$ and $B_i\cap B_j\subset C(a_i)$ for all $1\leq i\neq j\leq k$ and for some $i\in\{1,\ldots, k\}$ there are orthogonal families $\cl(x,x_1),\cl(x,x_2)\in B_i\cap C(a_i)$ such that
$B_i\cap B_j\subseteq\cl(x,x_1)\cup\cl(x,x_2)$ for all $j\neq i$.

\end{proposition}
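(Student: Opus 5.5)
We argue by contradiction with minimality of the block cover, using Corollary~\ref{cor:possible intersections} in $M$ via Axioms~\ref{block_expander} and \ref{path avoiding k}. Consecutive blocks $B_{i-1},B_i$ share the vertex $a_i$. If they shared an edge, they would lie in a common block by Axiom~\ref{unique_extension} and Proposition~\ref{prop:easy bounds on blocks}(\ref{lmm:common-edge}), which is a theorem of $T$. Since the cover lists the distinct copies of $\af_2$ that $p$ meets, two such blocks cannot share an edge of $p$. So $B_{i-1}\cap B_i$ contains no edge. More generally, for $i\neq j$ the intersection $B_i\cap B_j$ contains no $E$-edge. Otherwise the two would be contained in a single block, and since $p$ is non-returning we could merge them into a shorter cover.

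\textbf{Step 1: all intersections lie in one conjugacy class.} Take $x,y\in B_i\cap B_j$ with $x\notin C(y)$. By Proposition~\ref{prop: less trivial bounds on blocks}(\ref{lmm:not-conj}), which holds in $M$ via Axiom~\ref{path avoiding k}, there is a block containing both $B_i$ and $B_j$. This contradicts the previous paragraph, so $B_i\cap B_j$ lies in a single $C$-class.

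To see that the $C$-class is the same for all pairs, I would use the cycle itself. The arc of $p$ from $a_i$ to $a_{i+1}$ lies in $B_i$, and we may assume it avoids $C(a_i)$ except at its endpoints. If $C(a_i)\neq C(a_{i+1})$, I would concatenate these arcs around the cycle. This gives $E$-paths between neighbours of $a_i$ in $B_{i-1}$ and in $B_i$ that avoid $C(a_i)$. Via Lemma~\ref{lmm:block-minues-conjugates} and Axiom~\ref{path avoiding k}, such paths force $B_{i-1}$ and $B_i$ to share an edge, which is again a contradiction. This is the step where the uniform bound in Axiom~\ref{path avoiding k} on the length of the avoiding path has to be checked carefully against the length of $p$. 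I expect this to be the main obstacle. I would first try to shorten the path by replacing each arc with a path of bounded length inside its block, using Lemma~\ref{lmm:block-minues-conjugates}.

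\textbf{Step 2: the shape of the intersections.} By Proposition~\ref{prop: less trivial bounds on blocks}(\ref{lmm:not-neghbor-closed}) and Axiom~\ref{block_expander}, any two points of $B_i\cap B_j$ are joined by a straight $C$-path. Hence $B_i\cap B_j\subseteq\cl(\ell_{ij})$ for a single line $\ell_{ij}$, as in Corollary~\ref{cor:possible intersections}.

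Fix $i$. The sets $B_i\cap B_j$ for $j\neq i$ are subsets of $C(a_i)\cap B_i$, and this set is a tree by Axiom~\ref{c_1-forest}. Pick $x=a_i$. The families $\cl(\ell_{i,i-1})$ and $\cl(\ell_{i,i+1})$ both contain $a_i$. If they coincided, I would obtain a straight $C$-path through $B_{i-1}\cap B_i\cap B_{i+1}$. If they are distinct, they meet only in $a_i$ by Remark~\ref{rem:useful things about nbh closed}(5), so they are orthogonal families $\cl(x,x_1),\cl(x,x_2)$.

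It remains to choose $i$ so that every other $B_i\cap B_j$ falls into one of these two families. I would take $i$ to be extremal along the $C$-tree: the block whose intersections with its neighbours in the cover are farthest apart in $C(a_i)$. Using Axioms~\ref{ort-or-par}--\ref{parallel is equiv rel}, any third family at some $B_i\cap B_j$ would produce a non-straight $C$-path inside $B_i\cap B_j$. That contradicts Step 2.
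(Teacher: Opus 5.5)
\textbf{The main gap is in the second claim.} You assert that $\cl(\ell_{i,i-1})$ and $\cl(\ell_{i,i+1})$ both contain $a_i$, but this is false in general. The set $B_i\cap B_{i+1}$ contains $a_{i+1}$, not $a_i$. Inside the $C$-tree $C(a_i)\cap B_i$ the two families can be disjoint and joined by a $C$-geodesic made of many straight segments. Ruling this out for at least one $i$ is the whole content of the claim, and no local argument does it:
\begin{itemize}
\item your Step~2 controls each intersection $B_i\cap B_j$ separately;
\item Axioms~\ref{ort-or-par}--\ref{parallel is equiv rel} only concern $C_2$-pairs;
\item choosing $i$ with intersections ``farthest apart'' points the wrong way.
\end{itemize}
The missing idea is a global count on the tree, which is what the paper does. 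Let $q_i\subset B_i$ be the $C$-geodesic from $a_i$ to $a_{i+1}$, and let $s_i$ be its number of maximal straight segments. Since $C(a_0)$ is a tree (Axiom~\ref{c_1-forest}), the closed walk $q_0q_1\cdots q_k$ must reduce to a point. What cancels at the junction $a_{i+1}$ is a common subpath of $q_i$ and $q_{i+1}$. It therefore lies in $B_i\cap B_{i+1}\subseteq\cl(\ell_{i,i+1})$ (Axiom~\ref{block_expander}), so it sits inside the last segment of $q_i$ and the first segment of $q_{i+1}$. If every $s_i\geq 3$, the middle segments survive all cancellations, and the reduced closed walk is nontrivial, which is impossible in a tree. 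Hence some $q_i$ has at most two straight segments. The families of those segments at the turning vertex $x$ are the required orthogonal families $\cl(x,x_1),\cl(x,x_2)$. The paper runs this count with $s_0$ chosen minimal.

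\textbf{Your route to $C(a_i)=C(a_{i+1})$ also needs repair.} Assuming $C(a_i)\neq C(a_{i+1})$ does not make the concatenated arcs avoid $C(a_i)$. Some $a_j$ with $j\neq i,i+1$ may lie in $C(a_i)$, and in the end all of them do. The argument has to be a first-hit argument instead. Let $u_i\in B_{i-1}$ and $v_i\in B_i$ be the neighbours of $a_i$ on $p$.
\begin{itemize}
\item Axiom~\ref{path avoiding k} forces the rest of the cycle, from $v_i$ to $u_i$, to meet $C(a_i)$.
\item Take the \emph{first} such vertex $w$ after $a_i$, and argue that $w\in B_i$.
\item Non-returning then gives $w=a_{i+1}$.
\end{itemize}
The length bound you flag as the main obstacle is not one. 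The cycle $p$ has a fixed length $n$, so any subpath avoiding $C(a_i)$ also avoids $C_n(a_i)$, and the axiom applies with $k=n$.
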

\begin{proof}
The proof is by induction on $k$, the statement being empty for $k=1$. Now consider $k>1$ and let $i\in\{1,\ldots, k\}$.
Let  $u_i, v_i\in D_1(a_i)$ with $u_i\in p\cap B_{i-1}, v_i\in p\cap B_i$.
Then a cyclic shift $p_i$ of $p$ is a path from $v_i$ to $u_i$ and since $u_i, v_i$ are not contained in a common block, it follows from  Axiom~\ref{path avoiding k} that $p_i$ passes through some $w\in p\cap C(a_i)$. Let $w\in p$ be closest to $a_i$ with this property. It follows from Lemma~\ref{lmm:block-minues-conjugates} that we may assume that $w\in p\cap B_{j-1}\cap B_j$ for some $j$. So we may assume $w=a_j\in C(a_i)$. 
Since the path from $a_i$ to $w$ avoids $C(a_i)$ we have $w\in\Exp_m(v_i,a_i)\subseteq B_i$ for some $m\in\N$ by Axiom~\ref{path avoiding k} and since $p$ is non-returning, we must have  $w=a_{i+1}$. Thus, for all $i\leq k$ we have $C(a_i)=C(a_{i+1})$, proving the first claim.

Now assume that the second claim does not hold. By changing the $a_i$ if necessary we may assume that the unique shortest $C$-path $q_i\subset B_i$ from $a_i$ to $a_{i+1}$ has $s_i\geq 3$ straight segments. By reordering if necessary, we may also assume that $s_0$ is minimal among the $s_i$.
Since $C(a_i)$ is a tree under the edge relation $C$, the path $q_0\subset B_0$ is equal to the composition of the $q_i, i=1,\ldots k$. Note that $q_1\circ q_2$ has at least $(s_0+2)$-many straight segments because by Axiom~\ref{block_expander}  at most the last segment of $q_1$ can cancel with the first segment of $q_2$.  Inductively we see that the composition of the $q_i, i=1,\ldots k$ has at least $s_0+(k-1)$ many segments. Since this composition is equal to $q_0$ it follows that $k=1$, a contradiction.
\end{proof}

\begin{definition}\label{def:admissible set}
We call an $L$-structure $A$ admissible with components $A_i, i\in J$ for some set $J$, if the following  conditions hold:
\begin{enumerate}
\item  $A=\bigcup_{i\in J} A_i$ where each $A_i$ is isomorphic as an $L$-structure to $\af_2$ or a block of level $k_i$ for some $k_i\geq 1$;
\item if $C(x,y), E(x,y), B_k(x,y)$, or $ N_k(x,y)$ holds for some $k\in\N$, then $x,y\in A_i$ for some $i\leq m$.

\item $C_k(x,y), k\geq 2,$ holds if and only if there are $x_0=x,\ldots, x_m=y$ such that for  $i=0,\ldots m-1$ we have $x_i,x_{i+1}$ in the same component and $C_{k_i}(x_i, x_{i+1})$ holds for some $k_i$ with $k=\sum_{i=0}^{m-1}k_i$;

\item if $C_2(x_1,x_2)$ holds for $x_1,x_2\in C_1(z)$ not in the same component and there is some $y\in A$ with $\ort(x_1,y)$ and $\ort(x_2,y)$, then $\para(x_1,x_2)$;
\item if $C_2(x_1,x_2)$ holds for $x_1,x_2\in C_1(z)$ not in the same component and there is some $y\in A$ with $\para(x_1,y)$ and $\para(x_2,y)$, then $\para(x_1,x_2)$;
\item if $x\neq y\in A_i\cap A_j$, then $N_k(x,y)$ holds for some $k$;

\item for any finite subset $J_0\subset J$ the $L$-structure $A_{J_0}=\bigcup_{j\in J_0} A_j$ has a removable component, i.e. there is some  $k\in J_0$ such that $A_k\cap A_{J_0\setminus\{k\}}$ is contained in $\cl(b,x_1)\cup\cl(b,x_2)$ for some $b\in A_k$ and $\ort(x_1,x_2)$. 
\end{enumerate}

\end{definition}
\begin{remark}\label{rem:admissible} Suppose $A=\bigcup_{j\in J} A_j$ is admissible.
\begin{enumerate}
\item \label{rem:unique decomp of admissible sets} The components are unique.
\item \label{rem:component extensions} If for $j\in J$ the block $B_j$ contains $A_j$, then also $\bigcup_{j\in j} B_j$ is an admissible $L$-structure in a natural way.

\item \label{rem:intersections of components}
If $A=\bigcup_{j\in J} A_j$ is admissible with $A_j \cong \af_2$ for all $j\in J$, then by by condition (7), we see that for $i\neq j\in J$, the set $X=A_i\cap A_j$ is  either empty, a single vertex, or of the form $X=\cl(\ell)$ for some line $\ell$ or $X=\cl(\ell)\cup\cl(\ell')$ for orthogonal families $\cl(\ell), \cl(\ell')$.

\end{enumerate}

\end{remark}

\begin{theorem}\label{thm:models are admissible}
 An $L$-structure $M$ is a model of $T$ if and only if $M$ is admissible with all components isomorphic to $\af_2$ and for any $x_1,x_2\in M$ with $\ort(x_1,x_2)$ there is a component containing $x_1$ and $x_2$.
\end{theorem}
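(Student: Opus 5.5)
We prove the two directions separately. In both, the \emph{components} of a model are the copies of $\af_2$ obtained as $\bigcup_{k}\Exp_k(e)$ for an edge $e$.

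\textbf{From models to admissible structures.} Let $M\models T$. For each $E$-edge $e$ put $A_e=\bigcup_k \Exp_k(e)$; this is well defined by Axiom~(\ref{unique_extension}). We take the distinct sets $A_e$ as the components.

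First, each $A_e$ is isomorphic to $\af_2$ as an $L$-structure. The $L_0$-diagram of every $\Exp_k(e)$ is the quantifier-free content of $\Th(\af_2)$, which $T$ must capture through $T_L$ and Axiom~(\ref{unique_extension}). By Lemma~\ref{lem:L-diagram determined by L0 diagram} the $L$-diagram of $\Exp_k(e)$ is then that of a block of level $k$. Taking the union over $k$ gives the isomorphism.

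Next we check the conditions of Definition~\ref{def:admissible set}:
\begin{itemize}
\item Condition (1) uses Axiom~(\ref{no-isolated-vertex}), so that every vertex lies in some component.
\item Condition (2) for $E$ is immediate. For $C$, $N_k$ and $B_k$ it is Axiom~(\ref{relations hold only in blocks}) together with the definition of $B_k$.
\item Condition (3) follows from Axiom~(\ref{c_1-forest}). The $C$-graph is a forest, so $C_k$ is realised by the unique reduced $C$-path. We cut this path at the points where it leaves a component; by (2) each $C$-edge lies in one component.
\item Conditions (4) and (5) follow from Axioms~(\ref{ort-or-par}), (\ref{para implies orth exists}) and (\ref{parallel is equiv rel}), applied in the ambient model.
\item Condition (6) is the contrapositive of Axiom~(\ref{block_expander}). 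If $x\ne y\in A_i\cap A_j$ and $N_m(x,y)$ failed for all $m$, then blocks in $A_i$ and in $A_j$ containing $x,y$ would lie in a common block. Both components would then contain a common edge, so $A_i=A_j$.
\item The orthogonality clause of the theorem is Axiom~(\ref{relations hold only in blocks}) for $\ort$.
\end{itemize}

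The main obstacle is condition (7), the existence of a removable component. Suppose that no component of some finite $A_{J_0}$ is removable. Choosing intersection points and connecting them inside components (Lemma~\ref{lmm:block-minues-conjugates}), we build a simple cycle $p$. We arrange $p$ to be non-returning by shortcutting any return into a component, which is possible since each component is connected. Proposition~\ref{prop:cycles} then yields a component $B_i$ whose intersections with all the other components along $p$ lie in $\cl(x,x_1)\cup\cl(x,x_2)$ for orthogonal families. The work is to see that this $B_i$ is removable for all of $J_0$, and not only along the cycle. We take a component $A_k$ that is a leaf of the finite incidence structure of $J_0$, or one lying on such a cycle. In the tree-like case its intersection with the rest is a single pair $A_k\cap A_j$. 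Remark~\ref{rem:admissible}(\ref{rem:intersections of components}), whose proof is essentially (6) together with Corollary~\ref{cor:possible intersections}, forces that intersection into the required form $\cl\cup\cl$.

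\textbf{From admissible structures to models.} Let $M$ be admissible with all components isomorphic to $\af_2$ and satisfying the orthogonality clause. We verify the axioms of $T$ one at a time, always by reducing to a single component, where they hold by Remark~\ref{rem:T contained}:
\begin{itemize}
\item Axioms (\ref{no-isolated-vertex}), (\ref{relations hold only in blocks}) and (\ref{two-neighbors}) are local, by condition (2).
\item Axiom (\ref{unique_extension}) holds because $E$-edges lie in a single component, so an extension built from $e$ stays in the component of $e$.
\item Axiom (\ref{c_1-forest}) needs a global argument. We argue by induction on finite sub-unions, removing a removable component using condition (7). A $C$-cycle meeting the removed $A_k$ would have to enter and leave through $\cl(b,x_1)\cup\cl(b,x_2)$. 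But the $C$-structure there is a subtree of the tree $C(b)$ (Proposition~\ref{prop:conjugacy tree}), so any such cycle already closes inside $A_k$, which is impossible.
\item Axioms (\ref{ort-or-par})--(\ref{parallel is equiv rel}) follow from conditions (3)--(5) and the orthogonality clause.
\item Axioms (\ref{path avoiding k}) and (\ref{block_expander}) use the same induction on removable components. A short path, or a pair of blocks, meeting two components must cross their intersection, which is contained in $\cl\cup\cl$. An avoiding path therefore cannot leave a component, and two blocks not related by $N_m$ must share an edge.
\end{itemize}
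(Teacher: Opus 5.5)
Your overall architecture matches the paper's: components come from Axiom (2), pairwise intersections lie inside a single $\cl(\ell)$ by Axioms (9) and (2), condition (7) comes from Proposition~\ref{prop:cycles}, and the converse is a check of the axioms. However, your argument for condition (7) has a genuine gap.

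Proposition~\ref{prop:cycles} only controls $B_i\cap B_j$ for the components $B_j$ in the block cover of the chosen cycle $p$. If the component $B_i$ it produces also meets components of $J_0$ that are not on $p$, nothing you say places those intersections inside the same $\cl(x,x_1)\cup\cl(x,x_2)$. So $B_i$ need not be removable in $A_{J_0}$.

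You flag exactly this, but your resolution only covers the case where some component meets at most one other, i.e. a leaf of the incidence graph. That case is trivial: the leaf's intersection with the rest is a single pairwise intersection, hence lies in one $\cl(\ell)$. It does not use Proposition~\ref{prop:cycles} at all. When every component meets at least two others, which is precisely the situation a counterexample must be in, you give no argument; ``or one lying on such a cycle'' is never carried out.

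The missing idea is the paper's reduction to a \emph{minimal} counterexample. Take a finite family $B_0,\ldots,B_n$ with no removable component, minimal with this property. Order it cyclically so that one can pick $a_i\in B_{i-1}\cap B_i$, and build the simple cycle through these $a_i$, so that it passes through \emph{every} component of the family. Then the conclusion of Proposition~\ref{prop:cycles}, ``for all $j\neq i$'', is literally removability of $B_i$, which is a contradiction. The paper takes the existence of such a cyclic ordering of a minimal counterexample for granted, so a complete write-up would have to justify it as well.

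Three smaller points.

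First, Remark~\ref{rem:admissible}(3) is derived in the paper from condition (7), so citing it while proving (7) is circular. In the leaf case, use instead the fact that two distinct components meet inside a single $\cl(\ell)$; you already obtain this from Axioms (9) and (2) when verifying (6).

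Second, in the converse, the claim that an avoiding path ``cannot leave a component'' does not follow. An intersection $A\cap A'$ lies in some $C(b)$ with $b$ unrelated to $z$, and it may even contain $x$. So avoiding $C_k(z)$ does not prevent the path from crossing it.

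Third, uniqueness in Axiom (2) is not purely local. A stick of $e=(x,y)$ could a priori have its edges to $x$ and $y$ in other components. Ruling this out needs conditions (6) and (7): two $E$-adjacent vertices are not conjugate, so they cannot both lie in an intersection of distinct components, nor in a set $\cl(b,x_1)\cup\cl(b,x_2)$.
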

\begin{proof}
First assume that $M$ is admissibile and all components are isomorphic to $\af_2$ and such that and for any $x_1,x_2\in M$ with $\ort(x_1,x_2)$ there is a component containing $x_1$ and $x_2$. Then it is immediate that $M$ is a model of $T$.

Now assume conversely that $M$ is a model of $T$. Then every $E$-edge $e$ is contained in a unique component, i.e. copy of $\af_2$, by Axiom~\ref{unique_extension}. 
For any two components $A\neq B$ in $M$, we have $A\cap B\subseteq\cl(\ell)$ for some line $\ell$ by Axiom~\ref{block_expander} and  Axiom~\ref{unique_extension}.

Part (b) follows from Proposition~\ref{prop:cycles}: suppose $B_0,\ldots B_n$ form a minimal counterexample and that they are ordered in such a way that we can pick $a_i\in B_{i-1}\cap B_i$. Then we can build a simple cycle connecting these $a_i, i=0,\ldots n,$ contradicting Proposition~\ref{prop:cycles}.
\end{proof}
We note the following for reference below:
\begin{remark}\label{rem:admissible extension}
Suppose $M$ is admissible and let $A$ be a block of level $k\geq 1$.
Assume one of the following:
\begin{itemize}
\item there are $x_1, x\in M$ and $A\cap M\subset \cl(x_1,x)$; or
\item  there are $x_1, x_2\in M$  not contained in a common block with $\ort(x_1,x_2)$ and $x_1, x_2$ and  $A\cap M\subset\cl(x,x_1)\cup\cl(x,x_2)$.
\end{itemize}
Let $M\cup A$ be the $L$-structure defined as follows:

\begin{enumerate}
\item for $x\in M\setminus A, y\in A\setminus M$ we put $\neg C(x,y), \neg E(x,y), \neg B_k(x,y)$, and $\neg N_k(x,y)$ for $k\in\N$;

\item $C_k(x,y), k\geq 2,$ holds if and only if there is a unique shortest $C$-path of length $k$ from $x$ to $y$;
\item 
 if $C_2(x_1,x_2)$ holds for $x_1,x_2\in C_1(z)$ not in the same component and there is some $y\in A$ with $\para(x_1,y)$ and $\para(x_2,y)$, then $\para(x_1,x_2)$;
 
 \item  if $C_2(x_1,x_2)$ holds for $x_1,x_2\in C_1(z)$ not in the same component and there is some $y\in A$ with $\para(x_1,y)$ and $\ort(x_2,y)$, then $\ort(x_1,x_2)$;

\item if $B_k(x,y)$ holds, then $x, y\in M$ or $x, y\in A$.

\end{enumerate}  
Then $M\cup A$ is admissible. 

\end{remark}

\begin{definition}\label{def:class strong ext}
For admissible structures $A=\bigcup_{i\in J} A_i, B=\bigcup_{j\in J'} B_j, J\subseteq J',$ we write $A\leq B$ and say that $A$ is strong (or strongly embedded) in $B$ if  $A\subset B$ and 
\begin{enumerate}
\item  for $j\in J$ we have $A_j=B_j\cap A$;
\item  and for every finite subset $J_0\subset J', J_0\not\subseteq J,$ the admissible set $B_{J_0}= \bigcup_{j\in J_0} B_j$ has a removable component $B_k$ with $k\in J_0\setminus J$.

\end{enumerate}

Let $(\K,\leq)$ be the class of finite admissible structures with the partial ordering $\leq $.
\end{definition}

\begin{remark}\label{rem:admissible sets in af2}
\begin{enumerate}
\item Note that any block $B\subseteq\af_2$ of level $k\geq 1$ in $\af_2$ is an admissible structure and $B\leq\af_2$.

\item \label{rem: components are strong}
Clearly, if $A=\bigcup_J A_j,$  each $A_j$ is strong in  $A$.

\end{enumerate}
\end{remark}

\begin{definition}\label{def:minimal ext}

Let $A=\bigcup_{j\in J} A_j, B=\bigcup_{j\in J'} B_j\in\K$ with $A\leq B$. We call $B$ a minimal strong extension if exactly one of the following holds:
\begin{enumerate}
\item $A$ and $B$ have the same number of components and at most one component of $B$ properly contains the corresponding component of $A$; 
\item $B$ has exactly one component $B_0$ such that $B_0\cap A$ is not a component of $A$ and $A=\bigcup _{j\neq 0}  B_j$.
\end{enumerate}

\end{definition}

\begin{remark}\label{rem:minimal ext}
Clearly, for $A, B$ are admissible structures with finitely many maximal blocks and $A\leq B$, we can find a finite sequence $A=B_0\leq B_1\leq\ldots\leq B_s=B$ where each extension $B_i\leq B_{i+1}, i=1,\ldots s-1$ is minimal.
\end{remark}

The following is crucial for our approach:

\begin{proposition}\label{prop:amalgam}
The class $(\K,\leq)$  of finite admissible structures has amalgamation and joint embedding with respect to $\leq$. 
\end{proposition}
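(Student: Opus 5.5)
We build the amalgam as a free amalgam. Suppose $A\leq B$ and $A\leq C$ in $\K$, with $B=\bigcup_{j\in J_B}B_j$ and $C=\bigcup_{j\in J_C}C_j$, and $A=\bigcup_{j\in J}A_j$ with $J\subseteq J_B\cap J_C$. Identify $A$ inside both, keep the components indexed by $J_B\setminus J$ and $J_C\setminus J$ disjoint, and let $D=B\cup_A C$ be the $L$-structure of Remark~\ref{rem:admissible extension}. We keep all relations from $B$ and $C$ and declare $E,C,B_k,N_k$ false between $B\setminus A$ and $C\setminus A$. We define $C_k$ by concatenating shortest $C$-paths through the tree structure, as in condition (3) of Definition~\ref{def:admissible set}, and define $\para/\ort$ across the two sides by conditions (4),(5), using the transitivity of parallelism (Lemma~\ref{lem:parallel equiv}). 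Components over $J$ need care: if $A_j\subsetneq B_j$ and $A_j\subsetneq C_j$, we replace $B_j,C_j$ by a common block containing both. This exists by Remark~\ref{rem:admissible}(\ref{rem:component extensions}) and Remark~\ref{fct:edge_acl}, because every block of level $k$ over an edge of $A_j$ lies in the unique copy of $\af_2$ generated by that edge, so we may take a block of large enough level in that copy.

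To reduce the work, we first use Remark~\ref{rem:minimal ext} to write $A\leq C$ as a finite chain of minimal strong extensions. Amalgamating $B$ along each step in turn then reduces the problem to the case where $A\leq C$ is minimal. In case (1) of Definition~\ref{def:minimal ext}, one component $A_0$ grows to $C_0$. We enlarge the corresponding $B_0$ to a block containing both, as above, and add nothing new elsewhere. Since $C_0\setminus A_0$ is new, the intersections of this block with the other components of $B$ equal the old intersections $B_0\cap B_j$. In case (2), we add a single new component $C_0$ whose intersection with $A$ lies in $\cl(\ell)$ or in $\cl(\ell)\cup\cl(\ell')$ for orthogonal families, since $C_0$ is removable in $C$. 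Remark~\ref{rem:admissible extension} then says that $B\cup C_0$ is admissible.

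The verifications are as follows.
\begin{enumerate}
\item Conditions (1)--(6) of admissibility hold by construction.
\item Condition (7) says every finite subfamily has a removable component. For a subfamily $J_0$ of $D$'s components, if $J_0$ contains an index from $J_C\setminus J$, that new component is removable, because its intersection with everything else lies inside $A$ and hence inside the given $\cl$-sets. Otherwise $J_0$ lies in $J_B$, and we use admissibility of $B$.
\item Strongness $B\leq D$ uses the same removable new component. Strongness $C\leq D$ uses $A\leq B$: any finite family meeting $J_B\setminus J$ has a removable component from $J_B\setminus J$ in $B$. Its intersection with $C\setminus A$ is empty apart from $A$, so it stays removable in $D$.
\item Joint embedding is amalgamation over $A=\emptyset$, noting that $\emptyset\leq B$ by condition (7).
\end{enumerate}

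The main obstacle is checking that the free amalgam creates no new "cycles" of components. That is, a removable component in $B$ might fail to be removable in $D$ if it meets $C$'s components through $A$ in a way that forces two intersections lying in distinct non-orthogonal $\cl$-families. The first approach we will try is an argument in the style of Proposition~\ref{prop:cycles}: assume a minimal bad subfamily and trace a cycle of components alternating between $B$- and $C$-sides through $A$. Strongness of $A$ in each side then produces a removable component outside $A$, contradicting minimality. A second delicate point is the $C_k$ relation across sides: we need the $C$-graph on each conjugacy class to remain a forest (Axiom~\ref{c_1-forest}). This we expect to get from the same cycle argument, using that two $C$-paths which meet in two components must be straight (Corollary~\ref{cor:possible intersections}).
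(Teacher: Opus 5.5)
\textbf{There is a genuine gap in the new-component case.} Your reduction to a minimal strong extension, and your handling of the case where one component grows, agree with the paper. (The paper takes the canonical amalgam $\Exp_{k'}(e)$ inside the copy of $\af_2$ determined by $A_0$, using $\af_2\subseteq\dcl(\Exp_1(e))$. You need $\dcl$ of a level-$\geq 1$ block here, not just the edge, to make the embeddings of $B_0$ and $C_0$ compatible.)

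The gap is where $C=A\cup C_0$ with $C_0$ new and $C_0\cap A\subseteq\cl(x,x_1)\cup\cl(x,x_2)$ for an orthogonal pair $x_1,x_2\in A$. You invoke Remark~\ref{rem:admissible extension} and assert that conditions (1)--(6) hold ``by construction''. But that remark requires that $x_1,x_2$ are \emph{not} contained in a common block of $B$, and this can fail. The pair $x_1,x_2$ may lie in different components of $A$, with $\ort(x_1,x_2)$ holding across them. Then $B$, exactly like $C$, may add a new component $B_j$ containing $x,x_1,x_2$ together with a witness $z'\in D_1(x_1)\cap D_1(x_2)$, while $C_0$ carries its own witness $z$.

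In your free amalgam, $B_j$ and $C_0$ are then two distinct components sharing the orthogonal pair $x_1,x_2$. Orthogonal vertices do not lie in a common $\cl(\ell)$ and so are not $N_k$-related for any $k$. Hence condition (6) of Definition~\ref{def:admissible set} fails, and the orthogonality acquires two witnesses. Such a structure also cannot sit strongly in a model of $T$: by Axioms~\ref{block_expander} and~\ref{relations hold only in blocks}, an orthogonal pair lies in a unique copy of $\af_2$. That uniqueness is exactly what the last step of the proof of Theorem~\ref{thm:main} uses.

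\textbf{The missing idea is that the amalgam is not always free.} In this situation the paper identifies $z$ with $z'$ and replaces $B_j$ and $C_0$ by the smallest block containing both. Only when no such pair of witnesses exists does it use the free amalgam of Remark~\ref{rem:admissible extension}. The obstacle you anticipate (new cycles of components, or the forest property of $C$) is not where your construction breaks. A cycle argument in the style of Proposition~\ref{prop:cycles} cannot repair it, because the problem is two components overlapping too much, not a cycle.

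\textbf{A smaller point on $C\leq D$.} Apply $A\leq B$ to the family $(J_0\cap J_B)\cup J$, not just to $J_0\cap J_B$. Then $B_k\cap C_0\subseteq B_k\cap A$ is covered by the $\cl$-sets witnessing removability of $B_k$. As you wrote it, $B_k\cap A$ may meet components of $A$ outside $J_0$.
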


\begin{proof}
Let $A, B, C\in\K$ with $A\leq B, C$. It suffices to consider the case where $C$ is a minimal strong extension of $A$.

First suppose $A, B, C\subset \af_2$ are blocks with $A\subset B, C$ and $A=\Exp_k(e)$ has level $k\geq 1$.  Let $k'$ be minimal such that $B, C\subseteq D=\Exp_{k'}(e)$. Then we call $D$ the canonical amalgam of $B, C$ over $A$. Since $\af_2\subset \dcl(A)$ by Remark~\ref{rem:sticks}, this is well-defined.

Next suppose that $C= A\cup C_0$ where $C_0$ extends the block $A_0\subset A$. Let $B_0\subset$ be the corresponding block in $B$ and let $D_0$ be the canonical amalgam of  $B_0$ and $C_0$ over $A_0$. Then by Remark~\ref{rem:admissible extension} (\ref{rem:component extensions}), the $L$-structure $D=B\cup D_0$ is admissible and a strong extension of $B$ and $C$.

Now suppose that $C=A\cup C'$ where $C'\cap A\subset \cl(x_1, x)\cup\cl(x_2,x)$ for some orthogonal pair $x_1, x_2\in A$. 
Then we distiguish two cases:

First suppose there are $z\in C'\setminus A, z'\in B_j\setminus A$ for some $j\in J$ with $z, z'\in D_1(x_1)\cap D_1(x_2)$. Then we identify $z$ and $z'$ and we let $D_j$ be the smallest block  containing $B_j$ and $C'$.  The $L$-structure $D=B\cup D_j$ is admissible and a strong extension of $B$ and~$C$.

If this is not the case, we are in one of  the situations of Remark~\ref{rem:admissible extension} and again we conclude that $D=B\cup C$ is an admissible strong extension of $B$ and~$C$.
\end{proof}

\begin{theorem}\label{thm:main}
    Let $(\K,\leq)$ be the class of finite admissible structures with strong embeddings.
    A model $M$ of $T$ is $\K$-saturated if and only if $\omega$-saturated.
    Therefore, $T$ is complete and the theory of $\af_2$.
\end{theorem}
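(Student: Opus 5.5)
The plan is the usual Fraïssé–Hrushovski strategy for classes with strong embeddings. First I fix the notion of $\K$-saturation. A model $M$ of $T$, which is admissible by Theorem~\ref{thm:models are admissible}, is \emph{$\K$-saturated} if two conditions hold. For every finite admissible $A\leq M$ and every $B\in\K$ with $A\leq B$ there is a strong embedding of $B$ into $M$ over $A$. In addition, every finite subset of $M$ lies in some finite strong substructure. The latter holds because each finite set lies in finitely many blocks. To see this, take a block of bounded level around each point, using Axiom~\ref{no-isolated-vertex} and Axiom~\ref{unique_extension}, and then close off with Axiom~\ref{block_expander} and Corollary~\ref{cor:possible intersections}. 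Removable components exist by Proposition~\ref{prop:cycles}.

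The first key step is to show that strong substructures of a model carry enough information. Suppose $A\leq M$ and $A\leq N$ with $M,N\models T$, and $A$ finite. I claim the quantifier-free type of $A$ determines $\tp(A)$. For this I build a back-and-forth system between $\K$-saturated models. It consists of the partial isomorphisms $f\colon A\to A'$ between finite strong substructures $A\leq M$, $A'\leq N$. To extend $f$ to a new element $c\in M$, take a finite $B$ with $A\cup\{c\}\subseteq B\leq M$. The hypothesis $A\leq B$ follows from transitivity of $\leq$, which I would check from Definition~\ref{def:class strong ext} by concatenating removable-component orderings. $\K$-saturation of $N$ then gives $B'\leq N$ over $A'$. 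Hence any two $\K$-saturated models are $L_{\infty\omega}$-equivalent, and in particular elementarily equivalent.

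The second step is the equivalence itself. For ``$\omega$-saturated $\Rightarrow$ $\K$-saturated'', let $A\leq M$ and $A\leq B\in\K$. By Remark~\ref{rem:minimal ext} I may assume $B$ is a minimal strong extension. There are two cases. In case (1) of Definition~\ref{def:minimal ext}, a block grows, and since $\af_2\subseteq\dcl(\Exp_1(e))$ the new block is already realized inside the component of $M$ containing $A_0$, by Axiom~\ref{unique_extension}. In case (2), a new component $B_0$ meets $A$ inside $\cl(b,x_1)\cup\cl(b,x_2)$. Here I write down the partial type over $A$ saying ``there is an edge whose $k$-extension has the diagram of $B_0$, meets $A$ exactly in the prescribed set, and is in no block of bounded level with the rest of $A$''. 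Lemma~\ref{lem:L-diagram determined by L0 diagram} makes each condition first order. Finite satisfiability comes from realizing a fresh copy of $\af_2$ glued along the closure of lines. I would get this from a $\K$-saturated model, whose existence follows from Proposition~\ref{prop:amalgam} together with the converse direction and Theorem~\ref{thm:models are admissible}. For ``$\K$-saturated $\Rightarrow$ $\omega$-saturated'' I use the back-and-forth of step one. Every type over a finite set $A$, closed up to a finite strong $\bar A\leq M$, is realized in some elementary extension. In an $\omega$-saturated elementary extension $N$, the realization $c$ lies in a finite $\bar B\leq N$ over $\bar A$. $\K$-saturation gives a copy of $\bar B$ in $M$, and the back-and-forth shows the copy has the same type.

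Completeness then follows. Every model of $T$ has an $\omega$-saturated elementary extension, which is $\K$-saturated, and all $\K$-saturated models are back-and-forth equivalent. Since $\af_2\models T$ by Remark~\ref{rem:T contained}, we get $T=\Th(\af_2)$. I expect the main obstacle to be case (2) of the first direction. There one must show that the type of a new component glued along $\cl(\ell)\cup\cl(\ell')$ is consistent and yields a \emph{strong} embedding. In particular, one must rule out that the realization accidentally lies in a common bounded block with other parts of $M$. For this I would use Axiom~\ref{block_expander} and Axiom~\ref{path avoiding k} (via Proposition~\ref{prop:cycles}) and argue by compactness over the bounded levels.
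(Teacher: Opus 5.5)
Your architecture is the paper's. Completeness comes from a back-and-forth between $\K$-saturated models. The forward direction ($\omega$-saturated $\Rightarrow$ $\K$-saturated) is reduced to minimal strong extensions, and a growing component is handled by Axiom~\ref{unique_extension}.

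The gap is in case (2), which is the only place where anything nontrivial happens. There you must show that your partial type $q$ over $A$ is finitely satisfiable \emph{in the given model $M$}, i.e.\ consistent with $\Th(M,A)$. Here $q$ describes a fresh copy of $B_0$ meeting $A$ exactly in the prescribed subset of $\cl(b,x_1)\cup\cl(b,x_2)$. You take finite satisfiability from a $\K$-saturated model $N$ (a Fra\"{\i}ss\'e--Hrushovski limit), where $q$ is realized by richness. But $q$ being realized over a copy of $A$ in $N$ tells you nothing about $M$ unless $(M,A)\equiv(N,A)$. That is exactly your step-one claim for arbitrary models of $T$, and you only establish it between $\K$-saturated models. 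It would also need $A$ to stay strong in a $\K$-saturated elementary extension of $M$.

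Since $\K$-saturation of $M$ is what is being proved, this is circular. Your converse direction has the same problem: it needs the $\omega$-saturated extension $N\succeq M$ to be $\K$-saturated, which is the forward direction. So the two directions prop each other up. At no point do you use the axioms of $T$ to produce a new component \emph{inside} $M$, and without that nothing follows about all models of $T$, hence no completeness.

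The missing ingredient is an argument internal to $M$, which is what the paper (tersely) does.
\begin{itemize}
\item If the prescribed intersection contains an orthogonal pair $x_1,x_2$, Axiom~\ref{relations hold only in blocks} already puts them in a block of bounded level in $M$. Axiom~\ref{unique_extension} grows this block, and Axiom~\ref{block_expander} makes it unique.
\item If the intersection lies in a single $\cl(\ell)$ (or is a vertex), you need non-algebraicity. For each bound $m$, $T$ must yield edges in the prescribed position whose blocks of level $\le m$ contain no element of $A$ outside the prescribed intersection. This is the paper's ``infinitely many copies of $B_0$ over $A$''. Compactness and $\omega$-saturation then give an edge generating a component of $M$ that meets $A$ exactly as prescribed.
\end{itemize}
Strongness of the new component then comes from Proposition~\ref{prop:cycles}, as you say. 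Note also that the paper proves only the forward direction and gets completeness from it plus the back-and-forth. The converse you sketch is not needed for completeness, and it is only available once the forward direction is in place.
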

\begin{proof}
It suffices to show that if $M$ is an $\omega$-saturated model of $T$, then $M$ is $\K$-saturated. From this we conclude in the standard way (see e.g. \cite{TZmodel theory}, Theorem 10.4.9): if $N$ is a model of $T$, we may assume that $N$ is $\omega$-saturated, hence $\K$-saturated.
 Any two $K$-saturated structures are partially isomorphic and hence elementarily equivalent, see e.g. \cite{TZmodel theory} Exercises  4.4.1 and 1.3.5. This then shows that $T$ is complete.
   
Now suppose that $M$ is an $\omega$-saturated model of $T$. To show that $M$ is $\K$-saturated we have to show the following:
\begin{itemize}
\item every admissible structure with a single component can be strongly embedded into $M$;
\item  for every pair $A, B \in K$ with $A \leq B$, and every strong embedding of $A$ 
    into $M$ we can find a strong copy of $B$ over $A$ in $M$.
\end{itemize}

This then implies also that every admissible structure strongly embeds into $M$.

For the first point, let $A\in\K$ be an admissible structure with a single component, so $A$ is isomorphic as an $L$-structure to some block of level $k\geq 1$.
Since $M$ is a model of $T$, every $E$-edge $e$ in $M$ is contained in a unique extension $\Exp_k(e)$.
Now, we see that $A$ is strongly embedded into $M$ by Proposition~\ref{prop:cycles}.

For the second point assume that we have $A, B\in\K$ with $A\leq B$ and $A$ strongly embedded into $M$. By Remark~\ref{rem:minimal ext} we may assume that $B$ is a minimal strong extension of $A$, so either $B$ extends a unique component of $A$ or contains a new component.

If $B$ extends a unique component $A_0$ of $A$, then by Axiom~\ref{unique_extension} we can expand this component in $M$ and this extension is clearly strongly embedded.

Now assume that $B=A\cup B_0$ for some component $B_0$ of $B$ with $A\leq B$. Then $B_0$ is removable in $B$ and hence $B_0\cap A$ is contained in $\cl(x_1,x)\cup\cl(x,x_2)$ for some orthogonal families. Since $M$ is $\omega$-saturated, there is a copy of $\af_2$ in $M$ intersecting $A$ in the prescribed way and by Proposition~\ref{prop:cycles} this extension is strongly embedded. Note that if $B_0\cap A$ is contained in a set of the form $\cl(x_1,x)$, then $M$ contains infinitely many copies of $B_0$ over (the image of) $A$ with the prescribed intersection. Otherwise, by Axioms~\ref{block_expander} and~\ref{relations hold only in blocks}, $M$ contains a unique block intersecting the image of $A$ in  $\cl(x_1,x)\cup\cl(x,x_2)$. 
\end{proof}
 
The following is an immediate consequence:
\begin{corollary}\label{cor:saturated}
Let $M$ be a saturated model of $\T$. Then every vertex $x$ and every set $\cl(\ell)$ are contained in infinitely many copies of $\af_2$ and these copies are conjugate over $x$ or $\cl(\ell)$, respectively.
\end{corollary}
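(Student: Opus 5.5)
The argument combines the $\K$-saturation of $M$ from Theorem~\ref{thm:main} with the strong-extension machinery. Let $M$ be saturated, of cardinality $\kappa$. Then $M$ is $\omega$-saturated and hence $\K$-saturated by Theorem~\ref{thm:main}. By Theorem~\ref{thm:models are admissible}, $M$ is admissible with all components copies of $\af_2$.

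\emph{Infinitely many copies.} Fix a vertex $x$, or a set $X=\cl(\ell)$, contained in some component $A_0\cong\af_2$; by Axiom~\ref{no-isolated-vertex} and Axiom~\ref{unique_extension} such a component exists. Choose a block $A\subseteq A_0$ of level $k\geq 1$ containing $x$ (respectively a suitable finite part of $X$). By Remark~\ref{rem:admissible sets in af2}, $A$ is an admissible structure that is strong in $M$. Now build $B=A\cup A'$, where $A'$ is a fresh block with $A\cap A'=\{x\}$ (respectively $A\cap A'\subseteq\cl(\ell)$), made admissible by Remark~\ref{rem:admissible extension}. In $B$ the new component $A'$ is removable, so $A\leq B$ by Definition~\ref{def:class strong ext}. $\K$-saturation then gives a strong copy of $B$ over $A$ in $M$. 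Iterating with $A\cup A'_1\cup\dots\cup A'_n$ in place of $A$, each time adding a fresh component meeting the rest only in $x$ (respectively $\cl(\ell)$), produces $n+1$ distinct components through $x$ for every $n$. This uses the remark at the end of the proof of Theorem~\ref{thm:main}: when the intersection lies in a single $\cl(x_1,x)$, infinitely many copies exist. Since distinct blocks extend to distinct components by Axiom~\ref{unique_extension}, we get infinitely many copies of $\af_2$ through $x$ (respectively $\cl(\ell)$).

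\emph{Conjugacy.} Let $A_1,A_2$ be two components containing $x$ (respectively $\cl(\ell)$). The aim is an automorphism of $M$ fixing $x$ (respectively $\cl(\ell)$ pointwise) and mapping $A_1$ to $A_2$. First, Remark~\ref{fct:edge_acl} and the transitivity statements (Corollary~\ref{cor:C1 trans}, Remark~\ref{rem:ort transitive}) give an isomorphism $\sigma:A_1\to A_2$ fixing $x$, respectively fixing $\cl(\ell)$ pointwise. For the latter, note that the pointwise stabilizer of $\cl(\ell)$ in $\Aut(\af_2)$ determines the embedding up to the stabilizer, and $\cl(\ell)$ is determined by an edge $C(x,y)$. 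Next, show that $\sigma\cup\mathrm{id}_{\{x\}}$ (respectively $\sigma\cup\mathrm{id}_{\cl(\ell)}$) is elementary. The plan is to show that both $A_1$ and $A_2$ are strong in $M$ via $\leq$-closedness and that they have the same type over the parameter. By the back-and-forth system of strong finite substructures from the proof of Theorem~\ref{thm:main}, strong partial isomorphisms are elementary, so $\operatorname{tp}(A_1/x)=\operatorname{tp}(A_2/x)$. Since $M$ is saturated and the parameter set is small, homogeneity yields the desired automorphism.

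The main obstacle is the strongness step. One must check that for a finite strong substructure $F$ of $M$ containing a finite part of $A_1$, the structure $F\cup(\text{corresponding part of }A_2)$ has a removable component in the sense of Definition~\ref{def:class strong ext}. Equivalently, $A_1\cap A_2$ must be exactly $\{x\}$ or $\cl(\ell)$, or contained in a union of orthogonal families, with no further interaction. I would derive this from Proposition~\ref{prop:cycles}, which rules out non-returning cycles through several components, together with Remark~\ref{rem:admissible}(\ref{rem:intersections of components}).
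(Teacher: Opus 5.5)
Your route is the one the paper has in mind when it calls this an immediate consequence of Theorem~\ref{thm:main}: use $\K$-saturation to produce the copies, then back-and-forth and homogeneity for conjugacy. The genuine gap is the case of $\cl(\ell)$.

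Transitivity on $C$-edges (Corollary~\ref{cor:C1 trans}) only gives an isomorphism $A_1\to A_2$ fixing a $C$-edge $x,y$ of $\cl(\ell)$. Such a map preserves $\cl(\ell)$ setwise, not pointwise. For instance, the automorphism of $\F_2$ fixing $a$ and inverting $b$ fixes the vertices $\langle b\rangle$ and $\langle b^a\rangle$. It interchanges $\langle b^{ab}\rangle$ and $\langle b^{ab^{-1}}\rangle$ (with $b^g=g^{-1}bg$), and both of these lie in $\cl(b,b^a)$ by Lemma~\ref{lmm:finite-neighbor}. Your sentence about the pointwise stabilizer does not address this.

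What you actually need is a rigidity property of the induced $L$-structure on $\cl(\ell)$. Precisely: the bijection of $\cl(\ell)$ by which the two identifications $A_1\cong\af_2\cong A_2$ differ must be induced by an automorphism of $\af_2$. Nothing in your proposal establishes this, and the paper states the corollary without proof, so it does not either. Without it, your method only yields conjugacy over a $C$-edge of $\cl(\ell)$, or over $\cl(\ell)$ setwise.

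There are also two smaller points.

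\emph{The vertex case.} Here you misplace the difficulty. For $\sigma\colon A_1\to A_2$ with $\sigma(x)=x$ to be elementary, you only need every finite block of $A_1$ and of $A_2$ to be strong in $M$. That is the single-component case already used in the proof of Theorem~\ref{thm:main}, and no condition on $A_1\cap A_2$ is involved. Your proposed reformulation, that $A_1\cap A_2$ be exactly $\{x\}$, is false in general. The same $\K$-saturation argument gives components through $x$ that share with $A_1$ an entire family $\cl(\ell')\ni x$.

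It is also cleaner to use that a component is the unique copy of $\af_2$ containing any one of its edges. Then it suffices to move an edge $e_1\ni x$ of $A_1$ to an edge $e_2\ni x$ of $A_2$ over $x$. This involves only finite tuples, so it also works if $M$ is a countable saturated model, where $A_1$ is not small.

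\emph{The existence part for $\cl(\ell)$.} Your new blocks meet $A$ only in a finite piece of $\cl(\ell)$. You still have to show that a component containing a $C$-edge of $\cl(\ell)$ contains all of $\cl(\ell)$. One way: by Axiom~\ref{two-neighbors} the neighbouring lines are unique in $M$, so the neighbour closure computed inside the component agrees with the one computed in $M$.
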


The proof of Theorem~\ref{thm:main} also shows the following useful consequence:
\begin{corollary}\label{cor:strong submodels}
Let $M$ be a model of $\T$ with a submodel $M_0$. Then $M_0$ is a strong submodel of $M$ if and only if for each connected component $M_1$ of $M$ the model $M_1\cap M_0$ is connected.
In particular, any path connected submodel of $M$ is strongly embedded.
\end{corollary}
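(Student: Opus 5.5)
The plan is to unpack both conditions through the description of models of $T$ given in Theorem~\ref{thm:models are admissible}. There, $M$ is written as an admissible union $M=\bigcup_{j\in J'}M_j$ and $M_0=\bigcup_{j\in J}(M_0)_j$ with all components copies of $\af_2$. I read ``connected component $M_1$ of $M$'' as a component $M_j$, i.e. a copy of $\af_2$. Since $M_0\preceq M$, every $E$-edge of $M_0$ has its unique $k$-extensions (Axiom~\ref{unique_extension}) computed inside $M_0$. Hence each component of $M_0$ is a component of $M$, which gives $J\subseteq J'$ and $(M_0)_j=M_j\cap M_0$ for $j\in J$. So condition (1) of Definition~\ref{def:class strong ext} holds automatically, and everything reduces to condition (2): every finite $J_0\subseteq J'$ with $J_0\not\subseteq J$ must have a removable component with index outside $J$.

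For the direction ``strong $\Rightarrow$ intersections connected'', I argue by contradiction. Suppose some component $M_1=M_j$ with $j\notin J$ has $M_1\cap M_0$ disconnected. Pick $u,v$ in different pieces of $M_1\cap M_0$ and join them by a path inside $M_1$ and by a path inside $M_0$. The path inside $M_0$ passes through components $M_{i_1},\dots,M_{i_r}$ of $M_0$, and the two paths together give a simple cycle with non-returning block cover $(M_1,M_{i_1},\dots,M_{i_r})$. Proposition~\ref{prop:cycles} then forces all pairwise intersections into a single conjugacy class, sitting inside two orthogonal families at one index. I will use the finite family $J_0=\{j,i_1,\dots,i_r\}$. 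Its only index outside $J$ is $j$, so $M_j$ must be the removable component. But removability of $M_j$ makes $M_j\cap M_{J_0\setminus\{j\}}\subseteq\cl(b,x_1)\cup\cl(b,x_2)$, and this set is $C$-connected, hence path connected via the witnesses. That contradicts $u,v$ lying in different pieces once I check that the connecting witnesses also lie in $M_0$; they do, because $\ort$-pairs lie in a common component (Theorem~\ref{thm:models are admissible}).

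For the converse, assume every $M_j\cap M_0$ is connected, and let $J_0$ be finite with $J_0\not\subseteq J$. I apply condition (7) of admissibility to $M_{J_0}$ to obtain some removable component. If that component has index in $J$, I remove it and iterate on the smaller family. The key claim is that at some stage a removable component with index outside $J$ appears. Were this to fail, the components outside $J$ would intersect $M_0$ in at least two places, and connectedness of $M_j\cap M_0$ together with a path through $M_0$ would again produce a simple non-returning cycle. Proposition~\ref{prop:cycles} applied to that cycle yields the orthogonal-family configuration, and this configuration is exactly removability of some $j\notin J$. The ``in particular'' then follows: if $M_0$ is path connected and $M_1\cap M_0$ splits into two pieces, a path in $M_0$ between them closes a cycle as above, so the intersections are connected.

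The main obstacle is the converse, specifically forcing a removable component outside $J$. I would do it by induction on $|J_0\setminus J|$, using Proposition~\ref{prop:cycles} to rule out cycles whose blocks alternate between $J$ and $J'\setminus J$.
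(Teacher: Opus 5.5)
\textbf{The proposal has genuine gaps.} Your reading of ``connected component $M_1$ of $M$'' as a single copy $M_j$ of $\af_2$ does not fit the statement. Suppose $j\notin J$. Any edge $E(x,y)$ with $x,y\in M_0$ lies in a component $M_i$ of $M_0$. By Axiom~\ref{unique_extension} the copy of $\af_2$ containing a given edge is unique, so such an edge inside $M_j$ would give $M_i=M_j$, contradicting $j\notin J$. Hence $M_j\cap M_0$ has no $E$-edges. It is then neither a model (the statement calls it one) nor $E$-connected unless it has at most one point.

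The intended $M_1$ is a path component of $M$, i.e.\ a union of copies of $\af_2$. With that reading the ``in particular'' is immediate, since a path connected $M_0$ lies in one path component and misses all the others. Even the forward direction is cheap: the $D_n$ are in $L$ and $M_0\models T_L$, so two points of $M_0$ joined by a path in $M$ are joined by a path in $M_0$.

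Your reading pushes you to some $C$-type connectedness, and then the decisive step of your forward direction fails. From $M_j\cap M_{J_0\setminus\{j\}}\subseteq\cl(b,x_1)\cup\cl(b,x_2)$ and the $C$-connectedness of that set, nothing follows about $u,v$ lying in the same piece of $M_j\cap M_0$. The connecting vertices need not lie in $M_0$. Your justification does not supply this. Theorem~\ref{thm:models are admissible} only gives \emph{some} component containing an $\ort$-pair, and that component may well be $M_j$ itself rather than a component of $M_0$. Moreover, since $M_j\cap M_0$ has no $E$-edges, no path ``via the witnesses'' can stay inside it.

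The converse is not established either. Discarding a removable component $M_k$ with $k\in J$ and iterating does not give what Definition~\ref{def:class strong ext}(2) requires, because removability is relative to the whole family. A component that is removable in $M_{J_0\setminus\{k\}}$ need not be removable in $M_{J_0}$, since its intersection with $M_k$ is no longer taken into account.

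Likewise, Proposition~\ref{prop:cycles} only yields the orthogonal-family configuration at \emph{some} index of the cycle, and that index may lie in $J$. So it does not produce a removable component with index outside $J$. This is exactly the step you defer to an unspecified induction on $|J_0\setminus J|$.

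Finally, your argument for the ``in particular'' reuses the contradiction from the forward direction. That contradiction was derived from the assumption that $M_0$ is strong, so as written the argument is circular. (The paper itself gives no separate argument; it only refers back to the proof of Theorem~\ref{thm:main}, where strong embeddedness is read off from Proposition~\ref{prop:cycles}.)
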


\section{Stability}

\begin{theorem}\label{thm:stable}
The theory $\T$ of the Free Factor Complex of rank 2 is $\omega$-stable and $\af_2$ is its prime model.
\end{theorem}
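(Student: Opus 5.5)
Two things have to be shown: $\T$ is $\omega$-stable, and $\af_2$ is its prime model. Both will be derived from the description of models in Theorem~\ref{thm:models are admissible} and from the saturation result, Theorem~\ref{thm:main}.

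For $\omega$-stability, I fix a countable model $M_0$, or more generally a countable strongly embedded admissible set $A$ in a saturated model $M$, and count types over $A$. By Corollary~\ref{cor:strong submodels} I may enlarge $A$ to a countable model $M_0\leq M$ that is strong and path-connected in each component. The main step is to show that $\tp(c/M_0)$ is determined by a countable amount of data:
\begin{itemize}
\item the \emph{strong closure} of $M_0c$, obtained by adding the copy $A_c\cong\af_2$ of $\af_2$ containing $c$ (for $c$ inside a component of $M_0$ this closure is just $M_0$);
\item the isomorphism type of the intersection $A_c\cap M_0$.
\end{itemize}
By Remark~\ref{rem:admissible}(\ref{rem:intersections of components}), this intersection is empty, a vertex, a set $\cl(\ell)$, or a union of two orthogonal families $\cl(\ell)\cup\cl(\ell')$ lying in $M_0$. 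By Theorem~\ref{thm:main} and the back-and-forth between $\K$-saturated structures, the quantifier-free isomorphism type of the strong closure of $M_0c$ over $M_0$ determines $\tp(c/M_0)$. This is the usual argument that strong substructures have types fixed by their isomorphism type, and it goes through because $\leq$ is preserved under the back-and-forth.

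It remains to count these data. The families $\cl(\ell)$, vertices and orthogonal pairs $\ell,\ell'$ in $M_0$ form only countably many choices, since each $\cl(\ell)$ is determined by a $C$-edge in $M_0$ (Remark~\ref{rem:useful things about nbh closed}). Once the intersection is fixed, the position of $c$ inside $A_c\cong\af_2$ relative to $A_c\cap M_0$ also ranges over a countable set, as $\af_2$ is countable. Moreover, by Remark~\ref{rem:useful things about nbh closed}(3) and Remark~\ref{fct:edge_acl}, fixing one $C$-edge or $E$-edge inside the intersection fixes the embedding of $A_c$ up to the finite stabilizer $\Z_2\times\Z_2$ and the automorphisms fixing $\cl(\ell)$. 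Hence $|S_1(M_0)|\leq\aleph_0$, and $\T$ is $\omega$-stable.

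For primeness, I show that $\af_2$ embeds elementarily into every model $M$ of $\T$. Take any $E$-edge $e$ in $M$; by Axiom~\ref{unique_extension} it lies in a unique copy $\bigcup_k\Exp_k(e)\cong\af_2$. This copy is a single component, hence connected, so by Corollary~\ref{cor:strong submodels} it is strongly embedded in $M$, and it is itself a model of $T$. By the back-and-forth from Theorem~\ref{thm:main}, a strong submodel is elementary. Alternatively, one can check that every type realized in $\af_2$ is isolated: a tuple lies in some block $\Exp_k(e)$, and by Lemma~\ref{lem:L-diagram determined by L0 diagram} together with the uniqueness in Axiom~\ref{unique_extension}, its type is isolated by the finite diagram of that block plus the formula $B_k$. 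The latter route shows $\af_2$ is atomic and countable, hence prime.

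I expect the main obstacle to be justifying that the type of $c$ over $M_0$ is determined by the isomorphism type of its strong closure, i.e. that strong embeddings are elementary. This needs the back-and-forth system from Theorem~\ref{thm:main} to be set up between strong finite substructures of saturated models, and it needs Proposition~\ref{prop:amalgam} to hold with the extension realized uniquely over $A$ up to the intersection data. I would handle it first, by checking that the family of partial isomorphisms between finite strong substructures of $\omega$-saturated models of $\T$ has the back-and-forth property; everything else then reduces to counting.
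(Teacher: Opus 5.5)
\textbf{Primeness.} Your second route for primeness matches the paper's argument: a tuple of $\af_2$ lies in a block $\Exp_k(e)$, and edge-transitivity together with $\af_2\subseteq\dcl(\Exp_1(e))$ makes its type isolated.

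\textbf{The gap: $\omega$-stability.} The problem is in the $\omega$-stability count. As you define it, the ``strong closure'' $M_0\cup A_c$ is in general not strong in $M$. Moreover, the data you record, namely $A_c\cap M_0$ and the position of $c$ in $A_c$, do not determine $\tp(c/M_0)$.

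Suppose the component $A_c=D_0$ of $c$ is disjoint from $M_0$, but $c$ is joined to $M_0$ by a path through further components $D_1,\dots,D_r$ with $D_{i-1}\cap D_i=\cl(\ell_i)$ and $D_r\cap M_0\neq\emptyset$. Such chains occur in a saturated model, cf.\ Corollary~\ref{cor:saturated}. Then $\tp(c/M_0)$ contains formulas $D_n(x,a)$ with $a\in M_0$, and these depend on the chain: its length, the nearest point of $M_0$, and the attaching families. In all these cases, and also when $c$ is not path-connected to $M_0$ at all, your data only say $A_c\cap M_0=\emptyset$. So your count misses infinitely many types.

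Nor can you run the back-and-forth on $M_0\cup A_c$. Take $r=1$ and $J_0=\{D_0,D_1,E\}$ with $E$ a component of $M_0$ meeting $D_1$. The only component outside $M_0\cup A_c$ is $D_1$. But $D_1\cap(D_0\cup E)=\cl(\ell_1)\cup(D_1\cap E)$ need not lie in $\cl(b,x_1)\cup\cl(b,x_2)$ for an orthogonal pair $x_1,x_2$. So $D_1$ is not removable, and $M_0\cup A_c$ is not strong.

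\textbf{The fix.} The missing idea is to replace the single component $A_c$ by the whole finite chain of components linking $c$ to $M_0$. This is what the paper does in Lemma~\ref{lem:types}. Choose $a_1\in M_0$ at minimal distance from $c$, a path $p$ from $c$ to $a_1$ (continued one step into $M_0$), and its block cover $(B_0,\dots,B_n)$ with $B_{i-1}\cap B_i=\cl(\ell_i)$. The relevant closure is then $M_0\cup B_0\cup\dots\cup B_n$. The type $\tp(c/M_0)$ is fixed by:
\begin{itemize}
\item the point $a_1$;
\item the distances $n_i$ and block levels $k_i$ along $p$;
\item the finite diagrams of the pieces of $p$ in each block;
\item the conditions $\neg B_m(x_i,x_{i+2})$, which prevent the chain from collapsing;
\item the $\ort/\para$ data.
\end{itemize}
The chain is finite and each link contributes only countably many choices over a countable $M_0$, so the count remains countable. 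With this correction your strategy goes through and coincides with the paper's.
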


The proof follows from the following lemma:
\begin{lemma}\label{lem:types}
Let $M$ be a saturated model and $M_0$ a countable strongly embedded submodel. Let $c\in M $ and $a_1\in M_0$ be such that $d(c,a_1)$ is minimal. Let $p=(c=t_0,\ldots,t_l=a_1,t_{l+1}=a_2)$ be a path with block cover $(B_0,\ldots, B_n)$ for  some $a_2\in M_0$ where $B_{i-1}\cap B_i=\cl(\ell_i)$ for some line $\ell_i, i=1,\ldots, n$.

Let $C_i\subset B_i,i=0,\ldots n$ be blocks of level $k_i$ such that $p\subset \bigcup C_i$ and let $z_i\in p\cap B_{i-1}\cap B_i, i=1,\ldots n,$ and suppose $D_{n_i}(z_i,z_{i+1})$ holds.
Let $q(x)\subset\tp(c/A)$ be the partial type consisting of the following formulas:
 \begin{itemize}
 \item there exist $x_1,\ldots,x_{l-1}$ forming a path from $c$ to $a_1$ and 
 $z_1,\ldots, z_n\in\{x_1,\ldots, x_{l-1}\}$ with $D_{n_i}(z_i,z_{i+1})$, $D_{n_0}(x,z_1)$ 
 and $D_{n_n}(z_n,a)$;
 \item  $B_{k_i}(z_i,z_{i+1}), i=1,\ldots n,$ and $B_{k_0}(x,z_1)$ and $B_{k_n}(z_n, a_2)$;
 \item the quantifier-free formula describing the part of $p$ of contained in $C_i$ in 
 the diagram  in the block of level $k_i$;
 \item $\neg B_n(x_i,x_{i+2})$ for all $n\in\N$;
 \item for any pair $x_i, x_j$ with $C_2(x_i,x_j)$ specify $\ort(x_i,x_j)$ or $\para(x_i,x_j)$.
 
 \end{itemize}
If $d\in M$ realizes $q(x)$, then there is an automorphism of $M$ fixing $M_0$ and taking $c$ to~$d$.

\end{lemma}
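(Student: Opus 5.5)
We will build the required automorphism by a back-and-forth argument between the strong substructures $M_0\cup \bigcup_{i\le n} B_i$ and $M_0\cup\bigcup_{i\le n}B_i'$. Here $B_i'$ are the blocks determined by $d$. The plan is first to show that $q(x)$ pins down, up to isomorphism over $M_0$, the admissible structure generated by $M_0$ and the copies of $\af_2$ through which a path from the realization to $M_0$ passes. Then we extend this partial isomorphism to all of $M$ using $\K$-saturation (Theorem~\ref{thm:main}) and the saturation of $M$.

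\textbf{Step 1.} Let $d\models q$, and let $x_1',\ldots,x_{l-1}'$ and $z_1',\ldots,z_n'$ be witnesses to the first clause. The formulas $B_{k_i}(z_i',z_{i+1}')$ together with the quantifier-free block diagrams produce blocks $C_i'$ of level $k_i$ which are isomorphic to the $C_i$ via maps agreeing on the overlaps. By Lemma~\ref{lem:L-diagram determined by L0 diagram}, the $L$-diagram of each $C_i'$ is determined. By Axiom~\ref{unique_extension} and Remark~\ref{fct:edge_acl}, each $C_i'$ lies in a unique copy $B_i'$ of $\af_2$, and the isomorphism $C_i\to C_i'$ extends uniquely to $B_i\to B_i'$, since $\af_2\subseteq\dcl(\Exp_1(e))$. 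The clause $\neg B_m(x_i',x_{i+2}')$ for all $m$ guarantees that consecutive blocks are genuinely distinct components. The specification of $\ort/\para$, together with Axiom~\ref{block_expander} and Remark~\ref{rem:admissible}(\ref{rem:intersections of components}), forces $B_{i-1}'\cap B_i'$ to be exactly $\cl(\ell_i')$, the image of $\cl(\ell_i)$. So the glued maps are compatible and give an $L$-isomorphism $\phi:\bigcup B_i\to\bigcup B_i'$ over $B_n\ni a_2$.

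\textbf{Step 2.} We check that $\phi\cup\mathrm{id}_{M_0}$ is an isomorphism of $M_0\cup\bigcup B_i$ onto $M_0\cup\bigcup B_i'$, and that both are strong in $M$. Minimality of $d(c,a_1)$ is the tool here. It implies that $B_0,\ldots,B_{n-1}$ meet $M_0$ only inside $B_n$; otherwise a shorter path to $M_0$ would exist. Since $M_0$ is strongly embedded, each component of $M$ meets $M_0$ in a connected set (Corollary~\ref{cor:strong submodels}). The same argument applies to $d$, because $q$ fixes the distance data $D_{n_i}$. Proposition~\ref{prop:cycles} excludes any additional intersections: such an intersection would give a non-returning cycle through several components, contradicting the tree-like intersection pattern there. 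Both unions are then admissible, and removing $B_0,\ldots,B_{n-1}$ in order shows strongness.

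\textbf{Step 3.} Since $M$ is saturated and hence $\K$-saturated, and the two strong substructures are isomorphic over $M_0$, a back-and-forth using Proposition~\ref{prop:amalgam} and the uniqueness/genericity description from the proof of Theorem~\ref{thm:main} extends $\phi\cup\mathrm{id}_{M_0}$ to an automorphism of $M$. The description needed is that new components glued along a single $\cl(\ell)$ are infinitely many and conjugate (Corollary~\ref{cor:saturated}), while those glued along two orthogonal families are unique. Cardinality is handled by saturation in the usual way, since $M_0$ is countable.

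The main obstacle is Step 2. We must show that $q$, which mentions only finitely much of the path, rules out unexpected coincidences between the new blocks and $M_0$ or each other. Concretely, we need that $B_i'\cap M_0\subseteq B_n'$ and $B_i'\cap B_j'=\emptyset$ for $|i-j|>1$. The first approach to try is to assume such a coincidence, build a short cycle from it, and derive a contradiction with Proposition~\ref{prop:cycles} and the minimality of $d(c,a_1)$.
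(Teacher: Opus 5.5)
\textbf{Same route as the paper, but there is a genuine gap.} Your route is the paper's. Its proof asserts exactly your Steps 1--2: a realization $d$ gives a path with block cover isomorphic to $(B_0,\ldots,B_n)$, and the map extends to an isomorphism $M_0\cup\bigcup B_i\to M_0\cup\bigcup B_i'$. It leaves your Step 3 implicit.

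The gap is the sentence in Step 2 on which the whole argument for $d$ rests: ``the same argument applies to $d$, because $q$ fixes the distance data $D_{n_i}$.'' Those formulas only fix distances between consecutive transition points of the path, and apart from $a_1,a_2$ no parameter from $M_0$ occurs in $q$. So $q$ does not say that $d(x,M_0)=l$. It does not even say that $x\notin M_0$, nor which of the two line-closures through the last transition point is the one shared with $B_n$.

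This really fails. Take $n=1$ and $M_0=A\cup B_1$, where $A\neq B_1$ is another copy of $\af_2$ with $A\cap B_1=\cl(\ell_1)$. This $M_0$ is path connected, hence strong by Corollary~\ref{cor:strong submodels}. There is an automorphism of $M$ fixing $B_1$ pointwise and carrying $B_0$ onto $A$; it exists by the same homogeneity you use in Step 3, since $B_0\cup B_1$ and $A\cup B_1$ are strong and isomorphic over $B_1$. It fixes $a_1,a_2$, hence $q$, and sends $c$ to a realization $d\in A\subseteq M_0$. No automorphism over $M_0$ can map $c$ to $d$. So the cycle argument you propose for the ``main obstacle'' cannot work: this coincidence creates no forbidden cycle and is compatible with strongness.

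For the same reason, your Step 1 claim also fails in general. The $\ort/\para$ data concern only pairs of path points, so they cannot force $B_{i-1}'\cap B_i'$ to be the image of $\cl(\ell_i)$ rather than the orthogonal family through $z_i'$.

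\textbf{What is missing is a stronger $q$,} built from formulas that do belong to $\tp(c/M_0)$:
\begin{enumerate}
\item Add $\neg D_j(x,m)$ for all $m\in M_0$ and $j<l$, with $p$ chosen geodesic.
\item Replace the separate diagrams of the $C_i$ by the joint quantifier-free diagram of $\bigcup_i C_i$. Enlarge the $C_i$ so that $C_{i-1}\cap C_i$ contains a $C$-edge of $\cl(\ell_i)$, and also record the relations of this diagram to the points of $B_n$.
\end{enumerate}
All of this is determined by finitely much data over $M_0$, so the count in Theorem~\ref{thm:stable} is unaffected. With it, $a_1$ is also a closest point of $M_0$ to $d$. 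Your strongness-plus-Proposition~\ref{prop:cycles} argument can then be run for $d$ exactly as for $c$, and Steps 1 and 3 go through as you describe.

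The paper's two-sentence proof makes the same tacit assumption, so it does not fill this hole either.
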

\begin{proof}
If $d$ realizes $q(x)$, then there is a path $p'$ from $d$ to $a_1$ with block cover $(D_0,\ldots, D_n)$ isomorphic to $(B_0,\ldots, B_n)$. The map taking $p$ to $p'$ and fixing $M_0$ extends to an isomorphism from $M_0\cup\bigcup_{i=0}^n B_i$ to $M_0\cup\bigcup_{i=0}^n D_i$.
\end{proof}

\begin{proof}[Proof of Theorem~\ref{thm:stable}]
Over a countable $M_0$ there are clearly only countably many possibilities for $a_1$ and the types of paths $p$. Hence $\T$ is $\omega$-stable. Since $\Aut(\F_2)$ act transitively on edges of $\af_2$  and $\af_2$ is algebraic in any edge, it follows that $\af_2$ realizes exactly the isolated types and hence it is the prime model of $\T$.

\end{proof}

\section{Algebraic closure}\label{sec:acl}

We describe the algebraic closure only for subsets of copies of $\af_2$. The general case follows 
fairly easily from this using Theorem~\ref{thm:main}. We omit the details for the sake of brevity.
\begin{lemma}\label{lem:acl}
Let $M \models Th(\af_2)$ and let $A\cong \af_2$ be a component of $M$.
\begin{enumerate}
\item for any $X\subset A$ we have $\acl(X)\subseteq A$;
\item\label{lmm:non-conj-copy-acl-af2} if $x,y\in A$ and $x\notin C(y)$, then $\acl(x,y)=A$; in particular, for any
$E$-edge $e=(x,y)$  we have $\acl(e)=A$;
\item if $x,y\in A$ with $\ort(x,y)$, then $\acl(x,y)=A$;
\item if $x,y\in A$ with $C(x,y)$, then $y\notin\acl(x)$ and $z\notin\acl(x,y)$ for $z\in\ell(x,y)$;

\item if $X\subset\cl(\ell), |X|\geq 2,$ for some line $\ell$ in $M$, then $\acl(X)=\cl(\ell)$.

\item if $x\in C(b)$ and $x,y\not\subset\cl(\ell)$ for any line $\ell$, then $\acl(x,y)=A$.

\item if $x\in M$ is a vertex, then $\acl(x)=\{x\}$.

\end{enumerate} 

\end{lemma}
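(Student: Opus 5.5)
The plan is to work in a saturated model $M$ and use the structure theory from Theorem~\ref{thm:models are admissible}, Theorem~\ref{thm:main} and Corollary~\ref{cor:saturated}. Algebraicity will be established in two ways. Upper bounds on $\acl$ come from exhibiting infinitely many automorphism-conjugates over the parameters. Lower bounds come from definability inside a single component, using Remark~\ref{fct:edge_acl} ($\af_2\subseteq\dcl(\Exp_1(e))$) and Remark~\ref{rem:useful things about nbh closed}.

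For (1), take $z\notin A$ and produce infinitely many conjugates of $z$ over $X$. Since $X\subseteq A$, it suffices to move $z$ over all of $A$. By Theorem~\ref{thm:models are admissible} and Remark~\ref{rem:admissible}(\ref{rem:intersections of components}), any component $A'\ni z$ meets $A$ in the empty set, a point, some $\cl(\ell)$, or the union of two orthogonal families. In each case $A\cup A'$ (more generally, the strong hull of $A$ together with a path to $z$, as in Lemma~\ref{lem:types}) can be re-embedded over $A$ in infinitely many ways. This uses the extension property of Theorem~\ref{thm:main} together with the remark at the end of its proof that copies glued along a single $\cl(\ell)$ or a point occur infinitely often. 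Proposition~\ref{prop:amalgam} lets us take these copies pairwise distinct, so $z\notin\acl(A)$.

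For (2), (3) and (6), the point is that the parameters lie in a common block of bounded level not contained in any $\cl(\ell)$. By Lemma~\ref{lem:acl}(1) and saturation, such a block is determined up to finitely many choices, by Axiom~\ref{block_expander} and Corollary~\ref{cor:possible intersections}. Any block containing $x,y$ has bounded level, so some edge $e$ in it is algebraic over $x,y$, and hence $A=\dcl(\Exp_1(e))\subseteq\acl(x,y)$.
\begin{itemize}
\item For (2), use Proposition~\ref{prop: less trivial bounds on blocks}(\ref{lmm:not-conj}): every $y_2\in D_1(x)$ in a block containing $x,y$ lies in a common bounded block.
\item For (3), use Proposition~\ref{prop:easy bounds on blocks}(\ref{orth level}).
\item For (6), use part (\ref{lmm:not-neghbor-closed}) of Proposition~\ref{prop: less trivial bounds on blocks}.
\end{itemize}

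For (4), (5) and (7), use the automorphisms of $\af_2$ itself together with the gluing freedom.
\begin{itemize}
\item For (4), Corollary~\ref{cor:C1 trans}(2) and Remark~\ref{rem: fix nbhs and act on lines} give automorphisms fixing $x,y$ and acting regularly on the infinite line $\ell(x,y)$, so $z\notin\acl(x,y)$. Similarly, the stabilizer of $x$ has infinite orbits on $C_1(x)$ by Lemma~\ref{lem:conjugates}, so $y\notin\acl(x)$.
\item For (5), Remark~\ref{rem:useful things about nbh closed}(3) gives $\cl(\ell)\subseteq\acl(X)$ once two $C$-adjacent points are present. For general $X$ with $|X|\ge2$, first recover a $C$-edge algebraically from the unique $C$-path between two points (Proposition~\ref{prop:conjugacy tree}), then use Remark~\ref{rem:useful things about nbh closed}(2). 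For the reverse inclusion, Corollary~\ref{cor:saturated} supplies infinitely many copies of $\af_2$ containing $\cl(\ell)$ that are conjugate over $\cl(\ell)$. Any point outside $\cl(\ell)$ but in $A$ is then moved, using (1) and the fact that two distinct copies meet only in $\cl(\ell)$ (Remark~\ref{rem:useful things about nbh closed}(5)).
\item For (7), combine Corollary~\ref{cor:saturated} over a single vertex with (4) applied inside one copy.
\end{itemize}

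The main obstacle is (1), together with the upper bound in (5). There one must verify that the re-embeddings really extend to automorphisms fixing $A$ (respectively $\cl(\ell)$) pointwise, and not merely to partial isomorphisms. I would first try to use Lemma~\ref{lem:types}: realize the same quantifier-free path type via a different block cover glued along the same intersection, and then use $\K$-saturation to extend to an automorphism.
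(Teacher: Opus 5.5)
Your proofs of (2), (3) and (6) take a valid variant route. You apply the block-finiteness argument that the paper uses for (2) to all three parts, via Corollary~\ref{cor:possible intersections}. The paper proves only (2) this way. It then gets (3) and (6) by exhibiting $(x,y)$-definable points: the witness of orthogonality, which is $E$-adjacent to $x$, and an orthogonal pair on the unique $C$-path. Your phrase ``any block containing $x,y$ has bounded level'' should say instead that any two blocks of level $k$ through $x,y$ lie in one block of bounded level, so there are only finitely many. Recovering a $C$-edge from the unique $C$-path in the lower bound of (5) fills a detail the paper skips. Parts (1) and (4) follow the paper's idea, and the extension-to-automorphism issue you flag in (1) is left equally implicit in the paper's appeal to Corollary~\ref{cor:saturated}.

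The gap is in the non-algebraicity arguments. These move whole copies of $\af_2$, so they depend on how distinct components intersect, which you never establish.

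In (5), the decisive claim is that distinct copies containing $\cl(\ell)$ meet only in $\cl(\ell)$. You attribute it to Remark~\ref{rem:useful things about nbh closed}(5), but that remark concerns two neighbour-closed families inside a single copy. Remark~\ref{rem:admissible}(\ref{rem:intersections of components}) even lists $\cl(\ell)\cup\cl(\ell')$ as a possible intersection of components, and then points of $\cl(\ell')\setminus\cl(\ell)$ are not moved by your argument.

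In (1), you put that same case among those admitting infinitely many re-embeddings over $A$. But the remark you cite from the end of the proof of Theorem~\ref{thm:main} covers only intersections inside a single $\cl(\ell)$, and for two orthogonal families it asserts \emph{uniqueness}.

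What is missing is an argument that this case cannot occur:
\begin{enumerate}
\item Suppose distinct components $A,A'$ contain $v$ and $x_1,x_2\in C_1(v)$ with $\ort(x_1,x_2)$. Then $x_1,x_2$ are orthogonal in each component, so each contains a point of $D_1(v)\cap D_1(x_1)\cap D_1(x_2)$.
\item The instance of Lemma~\ref{lemma:conjugates intersection} for $x_1,x_2\in C_1(v)$ is first-order, hence holds in $M$. So this witness $t$ is shared, and the $E$-edge $(v,t)$ forces $A=A'$ by Axiom~\ref{unique_extension}.
\item Combine this with Axiom~\ref{c_1-forest}: components sharing two conjugates share the $C$-path between them, and a path leaving $\cl(\ell)$ does so through an orthogonal pair. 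Combine it also with Axiom~\ref{block_expander}: shared non-conjugates force a common block.
\end{enumerate}
Together these show that distinct components containing $\cl(\ell)$ meet exactly in $\cl(\ell)$.

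With that fact, your approach actually gives (5) in one line: by (1), $\acl(X)\subseteq A\cap A'=\cl(\ell)$ for any second copy $A'\supseteq\cl(\ell)$.

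The paper avoids needing this for (5) and (7). Once (2), (3), (6) are known, an algebraic point of $A\setminus\cl(\ell)$ would give $A\subseteq\acl(X)$. The automorphisms of Remark~\ref{rem: fix nbhs and act on lines} refute this, since they fix $\cl(\ell)$ pointwise and have infinite orbits on the line $\ell$. For (7), suppose $y\in\acl(x)\setminus\{x\}$; use (1) if $y\notin A$. Then either all of $A$ or the first vertex of the $C$-path from $x$ to $y$ lies in $\acl(x)$, contradicting (4).

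Your sketch of (7) needs this case split, or else a copy through $x$ meeting $A$ only in $x$. The copies supplied by Corollary~\ref{cor:saturated} do not suffice as they stand, because they may contain a whole family $\cl(\ell)\ni x$.
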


\begin{proof}
(1): This follows from Corollary~\ref{cor:saturated}.

(\ref{lmm:non-conj-copy-acl-af2}): First assume that $x\in D_1(y)$. Then  by Fact~\ref{fct:edge_acl}
   we have $\acl(x,y)=A$.  

Now consider the general case and let $B$ be a block of level $k$ containing $x$ and $y$ and let $y_2\in B\cap D_1(x)$.
By Proposition~\ref{prop: less trivial bounds on blocks}.\ref{lmm:not-conj} we see that $\Exp_{k'}(x,y_2)$   contains all blocks of level $k$ containing $x$ and $y$  where $k'$ is as computed in that proof.
    Therefore, there are only finitely many such blocks, and hence $B\subset\acl(x,y)$.  Since $B$ contains an edge, it follows from the first part that $\acl(x,y)=A$.

(3):  If $\ort(x,y)$, then $z\in C_1(x)\cap C_1(y)$ is algebraic in $x,y$, and so $z_0\in\ell(z,x)\cap\ell(z,y)$ is algebraic in $x,y$ and not conjugate to $x$. Hence the statement follows from (2)
    
(4)   This is a direct consequence of Corollary~\ref{cor:C1 trans}.
     
(5)  By Remark~\ref{rem:useful things about nbh closed} we have $\cl(\ell)\subseteq\acl(X)$. Now suppose towards a contradiction that there is some $z\in\acl(X)\setminus\cl(\ell)$ and $\cl(\ell)\subseteq C(x)$. Then by (2) and (3) we must have $z\in C(x)$ and the unique $C$-path  $(z=z_0,\ldots, z_k=x)$ from $z$ to $x$ is contained in $\acl(X)$. Since $z\notin\cl(\ell)$, there is some  pair $z_{i-1}, z_{i+1}$ with $\ort(z_{i-1},z_{i+1})$ and thus by part (3) we have $\acl(X)=A$, contradicting Remark~\ref{rem: fix nbhs and act on lines}.

(6) If $x\in C(y)$ and $x,y\not\subset\cl(\ell)$ for any $\ell$, the unique $C$-path $(x=z_0,\ldots, z_n=y)$ from $x$ to $y$ contains $z_{i-1}, z_{i+1}$ with $\ort(z_{i-1}, z_{i+1})$ by Lemma~\ref{lem:conjugates} and hence $\acl(x,y)=A$ by part (3).

(7) Let $x\in A$ and suppose $y\in\acl(x)$. Then by (2) and (3) we must have $z\in\C(x)$ and hence the unique $C$-path from $x$ to $z$ is in $\acl(x)$, contradicting again Part (3).

\end{proof}
\begin{remark}\label{rem:ack}
Note that an algebraically closed subset $A\subset\af_2$ is either a single vertex, of the form $\cl(\ell)$ for some line $\ell$ or all of $\af_2$. In particular, $A=\acl(x,y)$ for at most two elements $x,y\in\af_2$.

As in the case of right-angled buildings (see \cite{TENT_2014}), it can be shown  that for an arbitrary model $M$ of $\T$ and $X\subset M$ we have
\[\acl(X)=\bigcup_{x,y\in X}\acl(x,y).\]
\end{remark}

\section{From models of $Th(\af_2)$ to models of $Th(\of_2)$}

We end this note with a comment on the relation between models of $\T$ and models of the Theory of the Farey graph.
\begin{definition}
Given a model $M$ of $Th(\af_2)$ we define
a graph $M^c$ as follows:  
\begin{itemize}
    \item The vertices of $M^c$ are the $C$-connected components of vertices in $M$.  
    \item Two vertices  are adjacent in $M^c$ if there exist representatives from each class that are adjacent in $M$ with respect to $E$.  
\end{itemize}

There is a natural map
\[
\pi : M \longrightarrow M^c, \quad x \mapsto [x],
\]
sending each element of $M$ to its $C$-connected component.  

We call $M^c$ the \emph{conjugacy quotient} of $M$.
\end{definition}

We note that the $C$-connected components of a vertex in models of $\Th(\af_2)$  are $\bigvee$-definable.

\begin{remark}\label{rem:of2}
    By definition of $\of_2$ as the quotient of $\af_2$ under conjugation, it is immediate that if $M$ is a model of $\af_2$, then $M^c$ is a union of copies of $\of_2$ and these copies can only intersect in a single vertex by Theorem~\ref{thm:models are admissible}.
\end{remark}

\begin{proposition}\label{prop:model-af2-of2}
    For every $M \models Th(\af_2)$, $M^c$ is a model of the theory of $\of_2$.
\end{proposition}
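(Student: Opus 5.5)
We want: for every $M\models Th(\af_2)$, $M^c\models Th(\of_2)$. The plan is to use the structure theorem for models of $\T$ (Theorem~\ref{thm:models are admissible}) together with a known description of models of the theory of the Farey graph. Models of $Th(\of_2)$ are, as for trees of triangles, exactly the graphs that are unions of copies of $\of_2$ glued in a tree-like fashion along single vertices, such that every vertex lies in infinitely many copies. We take this from \cite{MT}. So it suffices to show that $M^c$ has this shape.

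The first step is to analyse a single component. Let $A\cong\af_2$ be a component of $M$. The $C$-relation is preserved under the isomorphism $A\cong\af_2$, and $C$-components inside $A$ are exactly conjugacy classes by Proposition~\ref{prop:conjugacy tree}. Therefore $\pi(A)$, computed inside $A$, is a copy of $\of_2$.

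We must also check that $\pi(A)$ is not collapsed further in $M^c$, i.e.\ that two non-conjugate $x,y\in A$ are not joined by a $C$-path leaving $A$. Suppose such a path exists. Then we get a cycle consisting of this $C$-path together with a path inside $A$. Making it non-returning and applying Proposition~\ref{prop:cycles}, all the intersection points $a_i$ lie in one $C$-class, which forces $C(x)=C(y)$. This step is the main obstacle: Proposition~\ref{prop:cycles} is phrased for $E$-paths, so I would first replace each $C$-edge by an $E$-path of length $2$ through a point of the line $\ell$, which lies in the same block by Axiom~\ref{relations hold only in blocks}. Then I would apply the proposition to the resulting simple cycle. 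The same argument shows that an $E$-edge of $M^c$ between classes $[x],[y]$ is always realised inside a single component: every $E$-edge of $M$ lies in a unique component by Axiom~\ref{unique_extension}.

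Next, different components $A\neq A'$ meet in at most a line closure or a union of two orthogonal families, by Remark~\ref{rem:admissible}(\ref{rem:intersections of components}). In either case the intersection is contained in a single $C$-class. Hence $\pi(A)\cap\pi(A')$ is at most one vertex, as already noted in Remark~\ref{rem:of2}.

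The gluing is tree-like, meaning there is no cycle of copies of $\of_2$ meeting pairwise in vertices. To see this, lift such a cycle in $M^c$ to a non-returning cycle in $M$. Proposition~\ref{prop:cycles} then forces all the gluing vertices into a single $C$-class, so the cycle of copies degenerates. Finally, for the richness condition, work in a saturated (or $\omega$-saturated) model. By Corollary~\ref{cor:saturated} every vertex lies in infinitely many copies of $\af_2$ that intersect only in its $C$-class, so every vertex of $M^c$ lies in infinitely many copies of $\of_2$. Since $M^c$ is interpretable uniformly in $M$ (the $C$-classes are $\bigvee$-definable), elementary equivalence passes from saturated models to all models. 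This yields $M^c\equiv\of_2$, matching the characterisation from \cite{MT}.
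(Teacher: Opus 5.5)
Your treatment of a single component, of the intersections of components, and of the acyclicity of the gluing is exactly the paper's proof. There, $M^c$ is a union of copies of $\of_2$ meeting in at most one vertex (Remark~\ref{rem:of2}). The graph $\Gamma$ whose vertices are these copies, adjacent when they share a vertex, has no cycles by Proposition~\ref{prop:cycles}. The paper then concludes immediately from \cite{MT}, Theorem~2.23. Your extra care (checking that $\pi(A)$ is not collapsed further, and replacing $C$-edges by $E$-paths of length $2$ before applying Proposition~\ref{prop:cycles}) is reasonable and fills in details the paper treats as immediate.

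The gap is your final paragraph, on which your conclusion as written depends. There are two problems with it.

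\emph{The criterion is misstated.} You attribute to \cite{MT} the condition ``every vertex lies in infinitely many copies''. This cannot be part of a characterisation of all models of $\Th(\of_2)$, since $\of_2$ itself is a model in which each vertex lies in exactly one copy.

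\emph{The transfer step fails as justified.} You pass from saturated $M$ to arbitrary $M$ by claiming $M^c$ is interpretable in $M$, but it is not. The $C$-components are only $\bigvee$-definable (via $\bigvee_n C_n$), and the edges of $M^c$ are defined by quantifying over arbitrary representatives of the classes. Quotients by $\bigvee$-definable equivalence relations do not preserve elementary equivalence in general. For instance, for $(\Z,S)$ and $S$-components, the quotients of $\Z$ and of the elementarily equivalent $\Z\sqcup\Z$ have one and two elements, respectively.

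The repair is to delete that paragraph. Use the correct criterion: copies of $\of_2$ pairwise meeting in at most one vertex, with $\Gamma$ acyclic. Theorem~\ref{thm:models are admissible} and Proposition~\ref{prop:cycles} hold in every model of $\T$, so your earlier steps verify this criterion for every $M$ directly, with no saturation or transfer needed.
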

\begin{proof}
Given $M^c$ consider the associated graph $\Gamma$ as in \cite{MT}: the  vertices of $\Gamma$ correspond to the copies of $\of_2$ in $M^c$ and there is an edge between two vertices if the corresponding copies of $\of_2$ have a common vertex. 
By \cite{MT}, Theorem 2.23 it suffices to show that $\Gamma$ has no cycles and this follows directly
from Proposition~\ref{prop:cycles}.
\end{proof}

\section{Appendix}\label{sec:appendix}
In this appendix we describe the quantifier-free diagram of the blocks $\Exp_k(e)$ where $e=E(x,y)$.

Let  $X_1=\Exp_1(x,y)$. Then the four common neighbours of $x,y$, which we call sticks following \cite{BB}, are the vertices $xy, x^{-1}y, xy^{-1}, x^{-1}y^{-1}  $ and
the following additional relations hold hold:
\begin{itemize}
    \item $C( xy,x^{-1}y^{-1})$;
    \item $C(x^{-1}y, xy^{-1})$.
\end{itemize}

Then $X_2=\Exp_2(x,y)$ has the following additional vertices corresponding to the eight edges of $\Exp_1(x,y)\setminus\{(x,y)\}$. The new vertices of $X_2$ are of three different types.
For each of the eight edges in $X_1\setminus\{(x,y)\}$ there is a unique new vertex with the following additional relations:
   
    \begin{enumerate}
            \item $x^{2}y$ is a stick of $\{x, xy\}$,   has an $E$-edge with $y^{-1} x y$ and a $C$-edge with $xyx$;
        \item $x^{-2}y$ is a stick of $\{x, x^{-1}y\}$,   has an $E$-edge with $y^{-1} x y$ and a $C$-edge with  $xy^{-1}x$;
        \item $x^{2}y^{-1}$ is a stick of $\{x, xy^{-1}\}$,   has an $E$-edge with $y x y^{-1}$  and a $C$-edge with  $xy^{-1}x$;
        \item $x^{-2}y^{-1}$ is a stick of $\{x, x^{-1}y^{-1}\}$  has an $E$-edge with $y^{-1} x y$  and a $C$-edge with  $xyx$;
        
        \item $x\inv y^{-2}$ is a stick of $\{y, x\inv y\inv\}$, has an $E$-edge with $yxy\inv$  and a $C$-edge with  $yxy$;
        \item $x\inv y^2$ is a stick of $\{y, x^{-1}y\}$, has an $E$-edge with $x\in y^{-1} x$  and a $C$-edge with  $yx\inv y$;
        \item $xy^{-2}$ is a stick of $\{y, xy\inv \}$, has an $E$-edge with $xyx\inv$ and a $C$-edge with  $yx\inv y$;
        \item $xy^2$ is a stick of $\{y, xy\}$, has an $E$-edge with $x\inv y x$  and a $C$-edge with  $yxy$.
        
        \end{enumerate}

        The next set of vertices consists of verticse which are  common extensions for two edges sharing a vertex, the first four are $1$-conjugates of $x,y$, respectively, the last four have no $C$-edges.
        
        \begin{enumerate}

        \item $xyx^{-1}$ is a stick for $\{x, xy\}, \{x, xy^{-1}\}$ and has a $C$-edge with  $y$;
        \item $x^{-1}yx$ is a stick for $\{x, x^{-1}y\}, \{x, x^{-1}y^{-1}\}$  and has  a $C$-edge with  $y$;   
        \item $yxy^{-1}$ is a stick for $\{y, x^{-1}y^{-1}\}, \{y, xy^{-1}\}$ and has  a $C$-edge with  $x$ ;
        \item $y^{-1}xy$ is a stick for $\{y, x^{-1}y\}, \{y, xy\}$ and  has a $C$-edge with  $x$;

        \item $xyx$ is a stick of $\{x, xy\}$ and $\{x, x^{-1}y^{-1} \}$.
        \item $xy^{-1}x$ is a stick of $\{x, xy^{-1}\}$ and $\{x, x^{-1}y \}$.
        \item $yxy$ is a stick of $\{y, x^{-1}y^{-1}\}$ and $\{y, xy\}$.
        \item $yx^{-1}y$ is a stick of $\{y, x^{-1}y\}$ and $\{y, xy^{-1} \}$.

\end{enumerate}

To completely describe the $L$-structure on $X_2$ we also note that
the following pairs of vertices are orthogonal  with witness $x$:

\begin{itemize}
\item $xyx\inv$ and $x\inv y x$;
\item $x^{2}y^{-1} $ and $x^{-2}y$;
\item  $x^{2}y$ and $ x^{-2}y^{-1}$;
\end{itemize}

and the following pairs are orthogonal with witness $y$:
\begin{itemize}
\item $yxy\inv$ and $y\inv xy$;
\item $y^{2}x^{-1}$ and $ y^{-2}x$; and
\item $y^{2}x$ and $y^{-2}x^{-1} $.
\end{itemize}

All other pairs of vertices $u,v\in X_2$ with $u\in C_2(v)$ are parallel.

Now $X_{k+1}=\Exp_{k+1}(x,y)$ is constructed from $X_k$ as follows:

Take every edge $e$ in $X_k$ with $\Exp_1(e)\subset X_k$ but $2-Ext(e) \not\subset X_k$ and add all sticks of edges in $\Exp_1(e)$ which are not already in $X_k$ as described above.

\begin{remark}   Note that the new $E$-edges never reduce the distance between vertices and if a new vertex has  at least three $E$-edges, then two of the neighbours form an edge. Furthermore, every new vertex has at most one $C$-edge. 

\end{remark}

\begin{corollary}\label{cor:blocks are convex}
The $x,y\in X_k=\Exp_k(e)$ the following holds:
\begin{enumerate}
\item If $x\in C(y)$, then the unique $C$-path from $x$ to $y$ belongs to $X_k$.
\item If $x,y\in X_{k-1}$, then the $E$-distance of $x$ and $y$ in $X_k$ does not decrease.
\item If $x,y$ are orthogonal, then the witness for orthogonality is in $X_k$.
\end{enumerate}

\end{corollary}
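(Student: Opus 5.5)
We argue by induction on $k$, using the explicit inductive description of $X_k=\Exp_k(e)$ given in the appendix together with the algebraic description of $E$- and $C$-edges from Remark~\ref{rem:bases} and Lemma~\ref{lem:conjugates}. We may assume $e=(a,b)$ for a basis $\{a,b\}$ of $\F_2$. For $k\leq 2$ all three claims can be checked directly on the lists of sticks, $C$-edges and orthogonal pairs given above for $X_1$ and $X_2$. For the inductive step we pass from $X_k$ to $X_{k+1}$ by adding sticks of edges $e'$ with $\Exp_1(e')\subseteq X_k$ but $\Exp_2(e')\not\subseteq X_k$. Each new vertex is, up to an automorphism taking $e'$ to $(a,b)$, one of the sixteen level-$2$ vertices listed, so its local configuration of $E$- and $C$-edges is known.

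For (2) we use the Remark preceding the corollary. Each new vertex $v\in X_{k+1}\setminus X_k$ has all its $E$-neighbours among $X_k$ forming an edge or a pair of edges sharing a vertex, and these neighbours are pairwise at distance at most $1$ in $X_k$. Any path in $X_{k+1}$ through $v$ can therefore be shortcut inside $X_k$: replace the segment $u,v,w$ by the edge $u,w$ or by $u=w$. Iterating this over all new vertices on a path between $x,y\in X_{k}$ gives a path in $X_k$ of no greater length. This reproves the distance claim for $X_{k-1}\subseteq X_k$ as well.

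For (1), the key observation is that every new vertex has at most one $C$-edge (again by the Remark). Hence a new vertex of $X_{k+1}$ is a $C$-leaf of $X_{k+1}$ attached to a $C$-neighbour that was already present, or is adjacent to another new vertex whose $C$-edge goes back into $X_k$; the appendix lists show the $C$-neighbour is always at lower level. Suppose $x,y\in X_{k+1}$ are conjugate. The unique $C$-path between them in $\af_2$, by Proposition~\ref{prop:conjugacy tree}, is determined by the reduced conjugator. We show it stays in $X_{k+1}$ by peeling off endpoints: if $x$ is new, replace $x$ by its unique $C$-neighbour. That neighbour lies in $X_{k+1}$ and is one step closer to $y$ along the tree path, since a leaf lies on the geodesic only as an endpoint. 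Then apply induction on the number of new endpoints and finally on $k$.

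For (3), orthogonality of $x,y$ means $x,y\in C_1(z)$ with witness $w\in D_1(x)\cap D_1(y)\cap D_1(z)$. By (1), $z\in X_k$. We then have to check that $w\in X_k$. This is the step I expect to be the main obstacle, since $w$ is an $E$-neighbour of three vertices and the definition of blocks only guarantees sticks of existing edges. The plan is to use Corollary~\ref{cor:C1 trans} and Remark~\ref{rem:ort transitive}: the configuration $(x,z,y,w)$ is unique up to automorphism and is realized as $(aba\inv,b,a\inv ba,a)$ in $X_2$. One then argues that whenever a $1$-conjugate pair $x,z$ with $C(x,z)$ appears in $X_k$, it appears together with the edge producing it, namely as $(x,\ldots)$ in the configuration of a level-$2$ list. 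In that configuration the line vertex witnessing any orthogonal partner already in $X_k$ is one of the displayed sticks. I would first try to prove by induction the stronger statement that every $C$-edge of $X_k$ is contained in a sub-block of level $2$ inside $X_k$ centred at an edge through its witness, which then yields (3) immediately.
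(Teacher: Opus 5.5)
There is a genuine gap, and it is where you expected it: part (3) is not proved. The route through Corollary~\ref{cor:C1 trans} and Remark~\ref{rem:ort transitive} cannot close it, because automorphisms of $\af_2$ do not preserve the fixed block $X_k=\Exp_k(e)$. Knowing that $(x,z,y,w)$ is an image of the configuration $(aba^{-1},b,a^{-1}ba,a)$, which lies in $\Exp_2(a,b)$, tells you nothing about whether $w$ lies in the given $X_k$. The stronger inductive statement you propose is only stated, not established.

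The missing idea, which is what the paper uses, is local:
\begin{enumerate}
\item The vertices $x,z,y$ are pairwise conjugate, hence pairwise non-adjacent.
\item Since $(x,z,y)$ is the $C$-path from $x$ to $y$, part (1) gives $z\in X_k$.
\item If $w\notin X_k$, let $m>k$ be the level at which $w$ first appears. Then $w$ is a new vertex of $X_m$ that is $E$-adjacent to three earlier, pairwise conjugate vertices.
\item By the Remark preceding the corollary, a new vertex is a stick of an edge of $X_{m-1}$, and as soon as it has at least three neighbours, two of them form an edge.
\end{enumerate}
From this the paper concludes that a new vertex is never the witness to orthogonality of vertices already present, so $w\in X_k$.

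Your treatment of (1) and (2) uses the same local information as the paper, but the steps as written fail.

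In (1), every vertex of $\af_2$ has infinitely many $C$-neighbours. So ``its unique $C$-neighbour'' can only mean the one inside $X_{k+1}$, and nothing shows it is the first vertex of the $\af_2$-geodesic from $x$ to $y$. Being a leaf of the induced graph on $X_{k+1}$ is not the same as being a leaf of the tree $C(x)$, so your peeling presupposes that the geodesic already lies in $X_{k+1}$.

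What does work is a tree argument. If a new vertex $v$ has a $C$-neighbour $u\in X_k$ and $y\in X_k$, then the geodesic from $u$ to $y$ lies in $X_k$ by induction. Hence it avoids $v$, and the geodesic from $v$ to $y$ is $v$ followed by it. You still have to locate the $C$-neighbours of new vertices. Your assertion that they are always of lower level is contradicted by the appendix itself. With $e=(x,y)$ as there, $x^2y$ and $x^{-2}y^{-1}$ are both $C$-adjacent to $xyx$, and all three are new in $X_2$. So even the count ``at most one $C$-edge per new vertex'' must be read with care.

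In (2), the neighbours of a new vertex need not be pairwise at distance at most $1$. For example, $xyx^{-1}$ is adjacent to $xy$ and $xy^{-1}$, which are at distance $2$. New vertices can also be adjacent to one another, for instance $x^2y$ and $y^{-1}xy$. So replacing $u,v,w$ by the edge $u,w$ is not available in general. You need a detour of length at most $2$ through an old vertex, and a separate treatment of consecutive new vertices on the path.
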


\begin{proof}
For (1) assume that there are $x,y\in X_k$ such that the $C$-path from $x$ to $y$ is in $X_{k+1}$ and has length $d$ with $d$ minimal. Then this path contains $d-1$ new vertices and $d$ new $C$-edges, which is impossible because every new vertex has at most one $C$-edge.

For (2) we already noted that new $E$-edges never reduce distance and for (3) we noted that any new vertex of valency at least $3$ has two neighbours forming an edge. Thus, they are not orthogonal, so a new vertex is not a witness to orthogonality of previous vertices.
\end{proof}

\end{document}